\documentclass[11pt,reqno,a4paper]{amsart}
\usepackage[T1]{fontenc}
\usepackage[utf8]{inputenc}
\usepackage{lmodern}
\usepackage{amsmath,amssymb,amsthm,mathtools,mathrsfs}
\usepackage{geometry}
\usepackage{microtype,enumitem,booktabs,longtable,needspace}
\usepackage[dvipsnames]{xcolor}
\usepackage[colorlinks=true,linkcolor=MidnightBlue,citecolor=MidnightBlue,urlcolor=MidnightBlue]{hyperref}
\setlist[enumerate]{leftmargin=*,topsep=3pt,itemsep=3pt}
\setlist[itemize]{leftmargin=*,topsep=3pt,itemsep=3pt}
\numberwithin{equation}{section}
\newtheorem{theorem}{Theorem}[section]
\newtheorem{mainresult}{Theorem}

\newtheorem{maincorollary}[mainresult]{Corollary}
\newtheorem{proposition}[theorem]{Proposition}
\newtheorem{corollary}[theorem]{Corollary}
\newtheorem{lemma}[theorem]{Lemma}
\theoremstyle{definition}
\newtheorem{definition}[theorem]{Definition}

\newtheorem{example}[theorem]{Example}
\theoremstyle{remark}
\newtheorem{remark}[theorem]{Remark}
\newcommand{\C}{\mathbb C}
\newcommand{\R}{\mathbb R}
\newcommand{\Z}{\mathbb Z}
\newcommand{\Q}{\mathbb Q}
\newcommand{\D}{\mathbb D}
\newcommand{\OO}{\mathcal O}
\newcommand{\Ocal}{\mathcal O}

\newcommand{\Lie}{\mathcal L}
\newcommand{\ii}{\iota}

\newcommand{\T}{\mathcal T}
\newcommand{\Per}{\operatorname{Per}}
\newcommand{\Aut}{\operatorname{Aut}}

\DeclareMathOperator{\Sing}{Sing}

\title[Holomorphic Tischler fibration theorem]{Holomorphic  Tischler fibration theorem for Stein spaces}
\author{Bruno Sc\'ardua}
\address{Instituto de Matem\'atica, Universidade Federal do Rio de Janeiro, Rio de Janeiro, Brazil}
\email{bruno.scardua@gmail.com}
\date{}
\subjclass[2020]{32E10, 32M25, 32Q28, 37F75}
\keywords{Holomorphic Tischler approximation, Stein spaces, reflexive differential forms, holomorphic foliations, period groups, holomorphic group actions}
\begin{document}
\begin{abstract}
We study a holomorphic analogue of Tischler approximation for closed differential forms normalized by a prescribed holomorphic vector field. The main setting consists of connected normal Stein spaces satisfying a degree-one reflexive Poincar\'e property, with finitely generated first integral homology and a holomorphic $\mathbb C^*$-action admitting a closed normalized reflexive one-form. Under these hypotheses, the given form can be approximated on compact subsets of the regular locus by closed normalized reflexive forms with discrete period groups. When the first Betti number is at least two, the approximating forms may be chosen with rank-two lattice periods. We relate these approximations to holomorphic locally trivial fibrations over quotients of $\mathbb C$, with connected Stein analytic fibers that may be singular. We also establish a finite-equivariant version for suitable Stein quotients and discuss the associated normalization obstructions, including those arising from local singularity topology. Further results concern closed holomorphic time forms, transverse foliations, and the relation between the period groups and the dynamics of complete holomorphic flows. Examples illustrate both the applicability and the limitations of the hypotheses.
\end{abstract}
\maketitle
\tableofcontents
\section{Introduction}\label{sec:intro}
The theory of foliations connects several areas of mathematics, including differential topology, algebraic topology, and dynamical systems. A classical example of this interaction is Tischler's fibration theorem \cite{Tischler1970}, which relates three objects: (i) nowhere-vanishing closed real one-forms on smooth compact manifolds; (ii) fibrations over the circle; and (iii) smooth codimension-one foliations invariant under transverse real flows (see \cite{Plante1972,BehagueScardua2008}). This viewpoint is also useful in the study of certain Anosov flows \cite{Plante1972,Scardua2003}. The holomorphic setting has been considered in \cite{Scardua2003}.

In this work we investigate the noncompact holomorphic setting, principally under the assumption that the ambient space is Stein. Our first objective is to approximate closed holomorphic one-forms on noncompact Stein spaces by closed holomorphic one-forms with discrete periods.
We therefore study closed holomorphic time forms normalized by a fixed complete vector field, especially the infinitesimal generator of a holomorphic $\C^*$-action. Our main results establish normalized approximation on normal Stein spaces under a local reflexive Poincar\'e hypothesis and identify the holomorphic fibrations associated with discrete periods. We also consider finite quotients, surface-specific existence and uniformization, and obstructions arising in singular and nonperiodic settings. The transverse-flow and suspension viewpoint builds on earlier work on invariant holomorphic foliations \cite{Scardua2003,BehagueScardua2008}.

\subsection{Basic notions}
Let $M$ be a connected complex manifold, and let $\Omega_M^1$ denote its sheaf of holomorphic one-forms.
For every open subset \(U\subseteq M\), we have 
\(
{\Omega_M^1(U)=H^0(U,T^*M),}
\)
where \(T^*M\) is the holomorphic cotangent bundle. Given  a nowhere-vanishing holomorphic vector field  $X$ on $M$ we put:
\begin{equation}\label{eq:intro-A}
\begin{aligned}
 \mathcal T_X&=\{\omega\in H^0(M,\Omega_M^1):d\omega=0,\ \omega(X)=1\},\\
 \mathcal F_X&=\{\beta\in H^0(M,\Omega_M^1):d\beta=0,\ \beta(X)=0\}.
\end{aligned}
\end{equation}
We shall refer to $\mathcal T_X$ as the space  of {\it closed time forms} of $X$ and 
to $\mathcal F_X$ as the space   of {\it closed tangent forms} of $X$. 
Whenever $\omega_0 \in \mathcal T_X\ne\varnothing$ we have the structure of an  affine space $\mathcal T_X=\omega_0+\mathcal F_X$. Every $\omega\in\mathcal T_X$ defines a nonsingular holomorphic codimension-one foliation $\mathcal F_\omega=\ker\omega$, invariant under the local flow of $X$  (\cite{BehagueScardua2008}, \cite{Scardua2003}). Our {\it approximation problem} asks {\em whether a closed time form $\omega$ can be approximated on compact sets by $\omega_j\in\mathcal T_X$ whose period images in $\C$ are discrete}. 
Throughout this work we shall adopt the following terminology. 
Given  a holomorphic action $\Phi:\C^*\times M\to M$, it defines a complete holomorphic vector field, called  \emph{infinitesimal generator},  by
\[
 X(p)=\left.\frac{d}{dt}\right|_{t=0}\Phi(e^t,p).
\]
Its  flow is given by $\varphi_t(p)=\Phi(e^t,p)$, and
$\varphi_{2\pi i}=\operatorname{id}_M$.   The zeros of the infinitesimal generator are the fixed points of the action.

For a class $\xi\in H^1(M,\C)$, its \emph{period homomorphism} is
\[
 \operatorname{Per}_{\xi}:H_1(M,\Z)\longrightarrow\C,
 \qquad [\gamma]\longmapsto\langle\xi,[\gamma]\rangle.
\]
The {\it discrete-period locus} is defined as 
\begin{equation}\label{eq:discrete-period-locus}
 \mathcal D(M)=\bigl\{\xi\in H^1(M,\C):
 \operatorname{Per}_{\xi}(H_1(M,\Z))\text{ is discrete in }\C\bigr\}.
\end{equation}
Since every discrete additive subgroup of \(\mathbb C\) is free abelian of rank at most two, every period group associated with a class in \(\mathcal D(M)\) has rank at most two.

We turn to normal complex analytic spaces, using standard terminology as in \cite{GrauertRemmert1984}.

\begin{definition}[Normalized reflexive holomorphic one-form]
Let $S$ be a normal complex analytic space, and let
$S_{\mathrm{reg}}$ denote its regular locus. Write
\[
j:S_{\mathrm{reg}}\hookrightarrow S
\]
for the natural inclusion.

We denote by $\Omega^1_{S_{\mathrm{reg}}}$ the sheaf
of holomorphic differential one-forms on the complex
manifold $S_{\mathrm{reg}}$, and define the sheaf of {\it reflexive} one-forms in $S$ as 
\[
\Omega_S^{[1]}:=j_*\Omega^1_{S_{\mathrm{reg}}}.
\]

Let $X$ be a holomorphic vector field on $S$.
A reflexive holomorphic one-form
\[
\omega\in H^0(S,\Omega_S^{[1]})
\]
is said to be \emph{normalized with respect to $X$}
if
\[
\omega(X)=1 \quad\text{on } S_{\mathrm{reg}}.
\]

It is called a \emph{closed normalized reflexive
holomorphic one-form} if, in addition,
\[
d\omega=0 \quad\text{on } S_{\mathrm{reg}}.
\]
\end{definition}

Recall that, since $S$ is normal, there is a
canonical identification
\[
\Omega_S^{[1]}
\simeq (\Omega_S^1)^{**},
\]
where $\Omega_S^1$ denotes the sheaf of
K\"ahler holomorphic differentials and
$(\cdot)^{**}$ denotes the double dual
as an $\mathcal O_S$-module.

Thus, a reflexive holomorphic one-form
is a holomorphic one-form on $S_{\mathrm{reg}}$,
regarded as a section of its reflexive extension
to $S$. Such a form need not extend as a
K\"ahler differential across the singular locus.
When $S$ is smooth, one has
$\Omega_S^{[1]}=\Omega_S^1$.

\begin{definition}[Degree-one reflexive Poincar\'e property]
\label{def:reflexive-poincare}
Let $S$ be a normal complex analytic space, and write
$j:S_{\rm reg}\hookrightarrow S$ for the inclusion of its regular locus.
Set
\[
\mathcal Z_S^1:=\ker\bigl(d:\Omega_S^{[1]}\longrightarrow
j_*\Omega^2_{S_{\rm reg}}\bigr).
\]
The space $S$ has the \emph{degree-one reflexive Poincar\'e property}
if the sequence of sheaves of abelian groups
\begin{equation}\label{eq:reflexive-poincare}
0\longrightarrow\underline{\C}_S\longrightarrow\mathcal O_S
\xrightarrow{d}\mathcal Z_S^1\longrightarrow0
\end{equation}
is exact. Equivalently, every closed reflexive holomorphic one-form
has a local holomorphic primitive, including at singular points. 

\end{definition}

The degree-one reflexive Poincar\'e property  is not automatic for a normal Stein space. 
It holds
for several natural classes of complex spaces.
It is automatic for smooth complex manifolds by
the classical holomorphic Poincar\'e lemma.
It also holds for normal spaces with isolated
complete-intersection singularities of complex
dimension at least three, as well as their
products with complex manifolds
(Proposition~\ref{prop:isolated-ici-poincare}).

Another class is provided by finite quotients
of smooth complex manifolds. Indeed, closed
reflexive one-forms pull back locally to closed
holomorphic one-forms on the covering manifold;
their local primitives can be averaged over the
finite group and descended to the quotient.
Thus the property holds, for instance, for
the quadratic cone
\[
\{xy=z^2\}\subset\mathbb C^3
\]
and, by Proposition~\ref{prop:isolated-ici-poincare}, for the three-dimensional ordinary double
point
\[
\{xy-zw=0\}\subset\mathbb C^4.
\]

\subsection{Tischler approximation  on normal Stein spaces}
Our first main result extends normalized Tischler approximation to normal Stein spaces.

\begin{mainresult}[Tischler approximation on normal Stein spaces: analytic form]\label{thm:intro:normal}
Let $S$ be a connected normal Stein space equipped with a  holomorphic $\C^*$-action with infinitesimal generator $X$.
Assume that:
\begin{enumerate}
\item The first homology 
$H_1(S,\Z)$ is  finitely generated.
\item $S$ has  the degree-one reflexive Poincar\'e property.
\end{enumerate}
Given a  closed reflexive one-form
$\omega$ normalized by  $X$, then $\omega$ is a compact-open limit on $S_{\rm reg}$ of closed,
normalized reflexive forms with discrete period groups. If $b_1(S)\geq 2$,
the approximating forms can be chosen so that
their period groups are lattices of rank two
in $\mathbb C$.
\end{mainresult}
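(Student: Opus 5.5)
The plan is to perturb $\omega$ inside the affine space of closed normalized reflexive forms, $\omega+\mathcal F_X$, by closed reflexive forms $\beta$ with $\beta(X)=0$ whose de Rham classes are prescribed. The perturbations are chosen so that the periods of $\omega+\beta$ become discrete, and so that $\beta$ is uniformly small on a given compact set $K\subset S_{\rm reg}$.

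\textbf{Step 1: realize cohomology classes by closed reflexive forms.} By the degree-one reflexive Poincar\'e property, the sequence \eqref{eq:reflexive-poincare} is an acyclic-type resolution. Since $S$ is Stein, $H^q(S,\mathcal O_S)=0$ for $q\ge1$. Hence
\[
H^1(S,\C)\cong H^0(S,\mathcal Z^1_S)/dH^0(S,\mathcal O_S).
\]
Every class $\xi\in H^1(S,\C)$ is therefore represented by a global closed reflexive form. I identify $H^1(S,\C)$ with $\operatorname{Hom}(H_1(S,\Z),\C)$, using finite generation of $H_1(S,\Z)$ and the universal coefficient theorem. Periods of a form in $H^0(S,\mathcal Z^1_S)$ are computed along loops in $S_{\rm reg}$. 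Here I use that $\pi_1(S_{\rm reg})\to\pi_1(S)$ is onto, which holds for normal spaces. A small argument is needed that loops through the singular set impose no extra constraint, and the local primitives provide exactly this.

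\textbf{Step 2: average to enforce $\beta(X)=0$.} Given a closed reflexive $\eta$ representing $\xi$, I average it over $S^1\subset\C^*$:
\[
\bar\eta=\frac1{2\pi}\int_0^{2\pi}\varphi_{i\theta}^*\eta\,d\theta .
\]
This form is closed, reflexive and $S^1$-invariant. It is cohomologous to $\eta$ because the action is homotopic to the identity. By holomorphy, $\bar\eta$ is also $\C^*$-invariant, so $\mathcal L_X\bar\eta=d(\bar\eta(X))=0$. Thus $\bar\eta(X)=c$ is constant on the connected space $S$.

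Next I show $c$ is determined by the class. Take a regular point $p$ whose orbit is not fixed, and consider the orbit loop $\gamma_p(\theta)=\varphi_{i\theta}(p)$. Then $\int_{\gamma_p}\bar\eta=2\pi i\,c$. The homology class $[\gamma_p]=:\delta$ is independent of $p$ by connectedness of the set of such points. So $c=\langle\xi,\delta\rangle/2\pi i$.

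For $\omega$ itself this gives $\langle[\omega],\delta\rangle=2\pi i$. Consequently, if $\langle\xi,\delta\rangle=0$, then $\beta:=\bar\eta$ lies in $\mathcal F_X$. In other words, $\mathcal F_X$ surjects onto the hyperplane $V=\{\xi:\langle\xi,\delta\rangle=0\}$.

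\textbf{Step 3: density argument.} Write $H_1(S,\Z)/\text{torsion}\cong\Z^{b}$, and let $P=\operatorname{Per}_{[\omega]}\in\operatorname{Hom}(\Z^b,\C)$. The condition $P(\delta)=2\pi i$ is fixed. I need homomorphisms $Q$ arbitrarily close to $P$ with $Q(\delta)=2\pi i$ and image discrete. I choose $Q$ with values in $\Q(2\pi i)+\Q\tau$ for some fixed $\tau\notin\R\cdot 2\pi i$, or in $\Q\cdot 2\pi i$ when discreteness of rank one is all that is needed. Such $Q$ are dense in the affine subspace defined by the constraint, since the constraint is rational on a basis adapted to $\delta$ (after allowing $\delta$ to be divisible or torsion). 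The image is then finitely generated inside $\frac1N(2\pi i\Z+\tau\Z)$, hence discrete. When $b\ge2$, I take a basis element $e\ne\delta$ direction and choose the values so that $Q(e)$ and $2\pi i$ (or two generic images) are $\R$-independent, giving a rank-two lattice. If $\delta$ is torsion, the constraint is vacuous, $\omega$ has zero period there, and the argument simplifies.

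\textbf{Step 4: uniform smallness.} Because $V$ is finite-dimensional, I fix a linear section $s:V\to H^0(S,\mathcal F_X)$ built from Step 2 on a basis. Then $\omega_j=\omega+s(Q_j-P)$, and $\|s(Q_j-P)\|_K\to0$ as $Q_j\to P$. The forms $\omega_j$ are closed, normalized and have discrete period groups.

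The main obstacle is Step 2: the homological independence of the orbit loop and the surjectivity onto $V$. This is delicate in the presence of fixed points or of points in $S_{\rm sing}$. I would handle it using connectedness of the set of regular non-fixed points, which is the complement of an analytic subset in a normal space.
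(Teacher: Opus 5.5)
Your proposal is correct and follows the paper's proof. The ingredients are the same: reflexive de Rham representatives, circle averaging with the orbit-loop computation forcing $\bar\eta(X)=\langle\xi,\delta\rangle/2\pi i$, and Gaussian-rational approximation in the normalized hyperplane, with a nonreal free coordinate giving rank two. Averaging a fixed basis of $V$ rather than each approximant $\alpha_j$ is the same construction by linearity and the $S^1$-invariance of $\omega$, and it makes the compact-open convergence immediate.

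Two of your worries are unnecessary. The aside on torsion $\delta$ is moot, since $\int_{\gamma_p}\omega=2\pi i$ forces $\delta$ to be nontorsion. Fixed points pose no difficulty, because $\omega(X)=1$ makes $X$ nonvanishing on the connected set $S_{\rm reg}$.
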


In complex dimension two, the existence of a closed
normalized holomorphic one-form is automatic for
fixed-point-free holomorphic $\mathbb C^*$-actions
on normal Stein surfaces, as shown in
Proposition~\ref{torus:prop:surface}.

We can also state it in the geometric form below:

\begin{mainresult}[Tischler approximation on normal Stein spaces: geometric form]
\label{thm:geometric-reflexive-tischler}
Let $S$ be a connected normal Stein space with
finitely generated $H_1(S,\mathbb Z)$, satisfying
the degree-one reflexive Poincar\'e property.
Let $X$ be the infinitesimal generator of a
holomorphic $\mathbb C^*$-action on $S$ admitting  a closed reflexive normalized 
holomorphic one-form $\omega$.

Then the holomorphic foliation
defined by $\omega$ on $S_{\mathrm{reg}}$
is approximated,  in the compact-open
topology, by  fibrations \[
F_j:S\longrightarrow\mathbb C/\Lambda_j
\]
where 
$\Lambda_j\subset\mathbb C, j \in \mathbb N$ is a  discrete subgroup. 
\end{mainresult}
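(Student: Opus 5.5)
We deduce Theorem~\ref{thm:geometric-reflexive-tischler} from Theorem~\ref{thm:intro:normal} by integrating the approximating forms. Theorem~\ref{thm:intro:normal} gives closed normalized reflexive forms $\omega_j$ with $\omega_j\to\omega$ uniformly on compact subsets of $S_{\rm reg}$ and with discrete period groups $\Lambda_j:=\Per_{[\omega_j]}(H_1(S,\Z))\subset\C$. Three points must then be checked. First, the periods are well defined on all of $S$. Second, $\omega_j$ has a global primitive modulo $\Lambda_j$ on $S$. Third, the resulting map is a locally trivial fibration whose level sets approximate the leaves of $\ker\omega$.

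For the primitive, we use the degree-one reflexive Poincar\'e property. The exact sequence \eqref{eq:reflexive-poincare} says that every closed reflexive form has local holomorphic primitives, including at singular points. Its connecting map $H^0(S,\mathcal Z^1_S)\to H^1(S,\underline{\C}_S)$ assigns to $\omega_j$ a de Rham-type class. Since $S$ is locally contractible, this class is identified with an element of $\mathrm{Hom}(H_1(S,\Z),\C)$, namely integration along loops in $S$ of the locally defined primitives. The hypothesis that the periods of $\omega_j$ lie in $\Lambda_j$ means that the induced class in $H^1(S,\C/\Lambda_j)$ vanishes. Concretely, we fix a base point $p_0\in S_{\rm reg}$ and set $F_j(p)=\int_{p_0}^p\omega_j \bmod \Lambda_j$, where the path is concatenated through local primitive charts. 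This gives a holomorphic map $F_j:S\to\C/\Lambda_j$ with $dF_j=\omega_j$ on $S_{\rm reg}$. It is holomorphic across $\Sing S$ because the local primitives are holomorphic there, so normality is not even needed at this step. Since $\omega_j\to\omega$ locally uniformly, the fibers of $F_j$ on $S_{\rm reg}$, which are the leaves of $\ker\omega_j$, converge to the foliation $\ker\omega$ in the compact-open sense. This convergence is in the sense of tangent distributions, and the local primitives converge as well.

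For the fibration structure, we use the flow. Let $\varphi_t(p)=\Phi(e^t,p)$, $t\in\C$. Because $\omega_j(X)=1$ and $\omega_j$ is closed, we have $\frac{d}{dt}F_j(\varphi_t(p))=\omega_j(X)=1$ on $S_{\rm reg}$. By density of $S_{\rm reg}$ and continuity, $F_j(\varphi_t(p))=F_j(p)+t$ everywhere. It follows that $X$ has no zeros, so the action is fixed-point free, and that $F_j$ is surjective. Setting $L=F_j^{-1}(0)$, the map $\Psi:\C\times L\to S$, $(t,q)\mapsto\varphi_t(q)$, is holomorphic and surjective, since $q=\varphi_{-F_j(p)}(p)$ for any lift of $F_j(p)$. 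Over a small disc $D$ in $\C/\Lambda_j$ lifted to $\C$, the map $D\times L\to F_j^{-1}(D)$, $(s,q)\mapsto\varphi_s(q)$, is biholomorphic, with inverse $p\mapsto(F_j(p),\varphi_{-F_j(p)}(p))$. This gives local triviality. The fibers are closed analytic subsets of a Stein space, hence Stein, and they may be singular.

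The main obstacle is justifying the global primitive across $\Sing S$ and checking that the period group computed on $S$ agrees with the one from Theorem~\ref{thm:intro:normal}. The forms live on $S_{\rm reg}$, and $H_1(S_{\rm reg})\to H_1(S)$ may fail to be injective; the local link topology enters exactly here. The argument above handles this by working directly with the sheaf $\mathcal Z^1_S$ on $S$: loops in $S_{\rm reg}$ that bound in $S$ have zero period because of the local primitives at singular points. Hence the periods factor through $H_1(S,\Z)$, which is the group used to define $\Lambda_j$.

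If one also wants connected fibers, we replace $\Lambda_j$ by the image of $\pi_1$ and use the Stein factorization of $F_j$. By the flow argument, the fibers of the lift to the covering $\C/\Lambda_j$ are connected. The theorem itself, however, only requires discreteness of $\Lambda_j$.
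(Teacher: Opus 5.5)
Your proposal is correct and follows essentially the paper's route. Theorem~\ref{thm:intro:normal} supplies the approximants $\omega_j$, and the argument of Corollary~\ref{cor:reflexive-fibration} turns each into a fibration: local primitives are glued modulo $\Lambda_j$, the flow identity $F_j(\varphi_t(p))=F_j(p)+[t]$ is used, and the inverse $p\mapsto(F_j(p),\varphi_{-F_j(p)}(p))$ gives $F_j^*(dt_j)=\omega_j\to\omega$. Your trivialization over a lifted disc with $L=F_j^{-1}([0])$ is just a local version of the paper's pullback-cover splitting $\widehat S\simeq L\times\C$.

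One small correction to your final aside. Since $\C$ is abelian, the period image of $\pi_1(S)$ already equals $\Lambda_j$, so no replacement or Stein factorization is needed. The paper obtains connected fibers directly from connectedness of the pullback cover $S\times_{\C/\Lambda_j}\C$, whose monodromy surjects onto $\Lambda_j$.
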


A few remarks about the geometric version is:

$\bullet$ We have 
\(
F_j^*(dt_j)\longrightarrow\omega
\)
uniformly on compact subsets of $S_{\mathrm{reg}}$.
Here $dt_j$ denotes the invariant holomorphic
one-form on $\mathbb C/\Lambda_j$ induced by
$dz$ on $\mathbb C$.

$\bullet$ Each base $\mathbb C/\Lambda_j$ is biholomorphic
either to $\mathbb C^*$ or to an elliptic curve.

$\bullet$ If $b_1(S)\geq2$, the subgroups
$\Lambda_j$ may all be chosen to be lattices
of rank two, so that every base
$\mathbb C/\Lambda_j$ is an elliptic curve.

\begin{mainresult}[Finite-quotient Tischler approximation]\label{thm:intro:finite}
Let $S=M/G$ be a finite quotient of a connected Stein manifold $M$
with finitely generated first homology, and assume that $M$ carries a holomorphic $\C^*$-action whose
infinitesimal generator is nowhere zero and which commutes with $G$.
Consequently $S$ inherits a fixed-point-free holomorphic $\C^*$-action.
Every $G$-invariant normalized closed holomorphic form on $M$ admits
$G$-invariant normalized approximants with discrete periods; the
descended reflexive forms on $S$ have discrete periods on $S_{\rm reg}$.

\end{mainresult}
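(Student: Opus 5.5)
The plan is to work $G$-equivariantly on the smooth Stein manifold $M$ and then descend to the quotient. Throughout, $\pi:M\to S=M/G$ is the quotient map, $\widetilde X$ is the generator on $M$ (nowhere zero, $G$-invariant), and $\omega\in\mathcal T_{\widetilde X}$ is $G$-invariant.

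\textbf{Step 1: the $G$-invariant tangent forms realise every class near $[\omega]$.} Let $V:=H^1(M,\C)^G$. I will show that the map
\[
\mathcal F_{\widetilde X}^G\longrightarrow V_0:=\{\xi\in V:\langle\xi,[\gamma_x]\rangle=0\},\qquad \beta\mapsto[\beta],
\]
is onto. Here $\gamma_x$ is the $\C^*$-orbit loop $t\mapsto\varphi_{it}(x)$, $t\in[0,2\pi]$. This loop is well defined because $\varphi_{2\pi i}=\mathrm{id}$, and its homology class $c$ does not depend on $x$ by connectedness. Moreover, $\langle[\omega],c\rangle=\int_0^{2\pi}\omega(i\widetilde X)\,dt=2\pi i$.

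To prove surjectivity, fix $\xi\in V_0$. Since $M$ is Stein, de Rham theory gives a closed holomorphic form $\eta$ with $[\eta]=\xi$, and averaging over $G$ lets us take $\eta$ $G$-invariant. Set $f:=\eta(\widetilde X)$. Then $\widetilde X f=\Lie_{\widetilde X}(\eta(\widetilde X))$. Averaging $\eta$ over the compact circle $\{\varphi_{is}\}$, which commutes with $G$, does not change its cohomology class, since the flow is homotopic to the identity. After this averaging, $\eta$ is $S^1$-invariant, hence $\C^*$-invariant by holomorphy, so $\Lie_{\widetilde X}\eta=0$. Cartan's formula then gives $d(\eta(\widetilde X))=0$, so $f$ is a constant $a$. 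Integrating over $\gamma_x$ gives $2\pi i\,a=\langle\xi,c\rangle=0$, so $a=0$ and $\beta:=\eta\in\mathcal F_{\widetilde X}^G$. Consequently, the classes of $\omega+\mathcal F_{\widetilde X}^G$ fill the affine subspace $[\omega]+V_0$.

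\textbf{Step 2: choose nearby classes with discrete periods, and the obstacle.} Since $H_1(M,\Z)$ is finitely generated, fix generators $\gamma_1,\dots,\gamma_n$. A class $\xi$ has discrete period group as soon as all its periods lie in a fixed discrete group $\Lambda$. The defining conditions of $V$ and $V_0$ are rational: $V$ is cut out by the $G$-action on $H_1(M,\Z)$, and $V_0$ by the integral class $c$. Hence $V\cap H^1(M,\Q)$ is dense in $V$. Because the period of $c$ is forced to equal $2\pi i$, I write
\[
\xi=2\pi i\,\theta,\qquad \theta\in[\omega]/(2\pi i)+V_0\subset V.
\]
This affine space contains rational points: $[\omega]/(2\pi i)$ pairs to $1$ with $c$, and $c$ is nontorsion since its period is nonzero, so some rational $\theta_0\in V$ pairs to $1$ with $c$. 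Adding $V_0\cap H^1(\Q)$, which is dense in $V_0$, gives density of rational points in the affine space. Choose rational $\theta_j\to[\omega]/(2\pi i)$; then the periods of $2\pi i\,\theta_j$ lie in $\frac{2\pi i}{N_j}\Z$, which is discrete. By Step 1 there are $\beta_j\in\mathcal F_{\widetilde X}^G$ with $[\omega+\beta_j]=2\pi i\,\theta_j$.

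\textbf{Step 3: make the approximants converge on compacta.} This is the step I expect to be the main obstacle: controlling $\beta_j$ on compact sets. I would take a basis $\beta^{(1)},\dots,\beta^{(k)}$ of lifts of a basis of $V_0$, lifted linearly via Step 1, and let $\beta_j$ be the combination whose coefficients are the coordinates of $2\pi i\,\theta_j-[\omega]\to0$. Then $\beta_j\to0$ uniformly on compacta, so $\omega_j:=\omega+\beta_j\to\omega$, and each $\omega_j$ is $G$-invariant, closed and normalized.

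\textbf{Step 4: descent to $S$.} Each $\omega_j$ descends to a holomorphic form on $\pi(M_{\rm free})$ and extends reflexively over $S_{\rm reg}$. On $S_{\rm reg}$, every loop $\gamma$ has a multiple $|G|\gamma$ whose lifts close up after composing with group elements. By $G$-invariance, $\int_\gamma\omega_j$ is a rational combination, with denominator dividing $|G|$, of periods on $M$; this needs care along the branch locus, and I would push loops off codimension $\ge2$ sets. Hence the period group on $S_{\rm reg}$ lies in $\frac{1}{|G|}\cdot\frac{2\pi i}{N_j}\Z$ and is discrete. Normalization by the induced $X$ is immediate from $\pi_*\widetilde X=X$, and $S$ is fixed-point-free because $\widetilde X$ commutes with $G$ and is nowhere zero.
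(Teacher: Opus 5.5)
Your plan follows the paper's proof closely. You average over $G$ and the circle so that the orbit integral forces $\omega_j(\widetilde X)=1$. You choose invariant classes with bounded denominators on the normalized hyperplane. You bound periods on $S_{\rm reg}$ by concatenating $\operatorname{ord}(g)$ translates of a lifted loop. Routing this through surjectivity of $\mathcal F_{\widetilde X}^G\to V_0$ is a fine variant: normalization and compact-open convergence become automatic, since only finitely many fixed forms are involved, whereas the paper averages each approximant separately. Two steps, however, fail as written.

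\textbf{Step 2.} The claim that $V\cap H^1(M,\Q)$ is dense in $V$ is false. $H^1(M,\Q)$ consists of real classes, so rational $\theta_j$ can only converge to classes with real coordinates. But $[\omega]/(2\pi i)$ is real only when every period of $\omega$ is purely imaginary. For example, on $(\C^*)^2$ with $\widetilde X=z\partial_z$ and $\omega=dz/z+i\,dw/w$, the class $[\omega]/(2\pi i)$ takes the value $i$ on the $w$-loop, so no sequence of rational classes converges to it.

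The repair is to use Gaussian-rational points. We have $V=V_\Q\otimes_\Q\C$ with $V_\Q=H^1(M,\Q)^G$, because the averaging projector is defined over $\Q$. The functional $\langle\cdot,c\rangle$ is rational and nonzero on $V$ (it equals $1$ on $[\omega]/(2\pi i)$), hence nonzero on $V_\Q$. Therefore the points of $V_\Q\otimes\Q(i)$ are dense in the affine hyperplane $\langle\theta,c\rangle=1$. The periods of $2\pi i\theta_j$ then lie in $\frac{2\pi i}{N_j}\Z[i]$, which is still discrete; this is exactly the paper's choice of coordinates in $2\pi i(\Q+i\Q)$. Your stated reason for the existence of a rational $\theta_0$, namely that $c$ is nontorsion, likewise needs this rationality argument or the $G$-invariance of $c$.

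\textbf{Step 4.} Saying that the descended form ``extends reflexively over $S_{\rm reg}$'' is circular: a reflexive form is by definition a holomorphic form on $S_{\rm reg}$. Moreover, $S_{\rm reg}\setminus\pi(M_{\rm free})$ generally contains a divisor, namely the image of fixed hypersurfaces of elements acting as pseudoreflections. For instance, take $w\mapsto -w$ on the $\C$-factor of $\C^*\times\C$; the quotient is smooth with coordinate $v=w^2$, and $\{v=0\}$ lies in $S_{\rm reg}$. Neither normality nor Hartogs extends a holomorphic form across a codimension-one set ($dv/v$ on $\C^*\subset\C$ does not extend), so you must use invariance upstairs.

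That is the content of Lemma~\ref{lem:reflection-one-form-descent}. After Bochner linearization, at a generic point of the branch divisor the stabilizer is cyclic and acts by $(z_1,z')\mapsto(\zeta z_1,z')$. Invariance forces the $dz_1$-coefficient to be $z_1^{m-1}b_1(z_1^m,z')$, so the form is holomorphic in $w_1=z_1^m$. The rest of the branch locus has codimension at least two in $S_{\rm reg}$ and is handled by Hartogs.

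You should also replace $G$ by its effective quotient first, since otherwise $M_{\rm free}=\varnothing$; its order still divides $|G|$, so your period bound survives. With these repairs your argument proves the statement.
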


\subsection{Manifold case}
When $S$ is smooth, the reflexive Poincar\'e property follows from the classical holomorphic Poincar\'e lemma (see \cite{Wells2008}).
The general approximation theorem specializes to the following smooth statement.

\begin{maincorollary}[Tischler approximation: manifold case]
\label{thm:intro:a}
Let $M$ be a connected Stein manifold with finitely generated first homology group 
$H_1(M,\Z)$. Let $X$ be the nowhere-vanishing infinitesimal
generator of a holomorphic $\C^*$-action on $M$. 
Then the following three cases occur:
\begin{enumerate}[label=\textup{(\roman*)},leftmargin=*]
\item If $b_1(M)=0$, then $\mathcal T_X=\varnothing$.
\item If $b_1(M)=1$ and $\mathcal T_X\ne\varnothing$, every
$\omega\in\mathcal T_X$ has a nonzero discrete cyclic period group;
its integral defines a holomorphic locally trivial fibration with
connected fibers over $\C^*$, whose fibers are the leaves of
$\ker\omega$. 
\item If $b_1(M)\ge2$ and $\mathcal T_X\ne\varnothing$, every
$\omega\in\mathcal T_X$ is a compact-open limit of closed normalized
holomorphic forms with rank-two lattice period groups. Their leaves
are connected fibers of holomorphic locally trivial fibrations over
elliptic curves.
\end{enumerate}
In each case where $\mathcal T_X\ne\varnothing$, the cohomology
classes of its members constitute the affine hyperplane
\[
\bigl\{\xi\in H^1(M,\C):\langle\xi,[\gamma]\rangle=2\pi i\bigr\},
\]
where $[\gamma]$ is the common orbit-circle class.
\end{maincorollary}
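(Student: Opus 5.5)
We derive the corollary from Theorem~\ref{thm:intro:normal} together with a direct analysis of the orbit circle; the main new work is cases (i)--(ii) and the final hyperplane statement. Fix $p\in M$ and let $\gamma(s)=\varphi_{is}(p)$, $s\in[0,2\pi]$. This is a closed loop because $\varphi_{2\pi i}=\operatorname{id}_M$. For any $\omega\in\mathcal T_X$ we get
\[
\int_\gamma\omega=\int_0^{2\pi}\omega(iX)\,ds=2\pi i .
\]
Since $M$ is connected, the class $[\gamma]\in H_1(M,\Z)$ does not depend on $p$, because the loops $\gamma_p$ vary continuously with $p$. So every $\omega\in\mathcal T_X$ has the nonzero period $2\pi i$ on $[\gamma]$.

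Case (i) follows at once: if $b_1(M)=0$, then $H^1(M,\C)=0$, so every closed form is exact, and exact forms integrate to $0$ on $\gamma$, which contradicts the period $2\pi i$. For the hyperplane statement, two members of $\mathcal T_X$ differ by some $\beta\in\mathcal F_X$. Then $\int_\gamma\beta=\int_0^{2\pi} i\beta(X)\,ds=0$, so all classes lie in the stated hyperplane $H$. For the converse, take $\xi\in H$ and $\omega_0\in\mathcal T_X$. Since $M$ is Stein, $H^1(M,\C)$ is computed by closed holomorphic forms, so there is a closed holomorphic $\eta$ representing $\xi-[\omega_0]$. This $\eta$ need not satisfy $\eta(X)=0$, and I plan to fix it by averaging over the circle action. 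Set
\[
\bar\eta=\frac1{2\pi}\int_0^{2\pi}\varphi_{is}^*\eta\,ds,
\]
which is cohomologous to $\eta$ because the circle action is homotopic to the identity. Then $\bar\eta(X)$ is $S^1$-invariant. It is also $X$-invariant along the real direction, since $\mathcal L_X\bar\eta=d(\bar\eta(X))$ and holomorphy gives full $\C^*$-invariance. Subtracting $d f$, where $f$ is a primitive of $\bar\eta(X)$ along orbits, should yield a form in $\mathcal F_X$. This is the step I expect to be the main obstacle. The cleanest route is to observe that $g:=\bar\eta(X)$ is a holomorphic function with $X g=\mathcal L_X(\bar\eta(X))=0$, so $g$ is constant along orbits. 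Its value is $\frac{1}{2\pi i}\int_\gamma\bar\eta=\frac{1}{2\pi i}\langle\xi-[\omega_0],[\gamma]\rangle=0$, hence $g\equiv0$. Then $\omega_0+\bar\eta\in\mathcal T_X$ represents $\xi$.

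Case (ii): if $b_1=1$, then $H_1(M,\Z)/\mathrm{torsion}\cong\Z$, so the period group of $\omega$ is $\Z\cdot c$ for some $c$. It is nonzero because it contains $2\pi i$, and it is discrete because it is cyclic. Fix a base point and set $F(x)=\int^x\omega \bmod c\Z$. This gives a holomorphic map $F:M\to\C/c\Z\cong\C^*$ with $F^*dt=\omega$. Since $\omega(X)=1$, $F$ is a submersion. The flow of the complete field $X$ satisfies $F(\varphi_t(x))=F(x)+t$, so $F$ is equivariant for the $\C$-translation action on the base. This gives local trivializations $(y,t)\mapsto\varphi_t(y)$ over small discs, so $F$ is locally trivial. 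Fibers are unions of leaves of $\ker\omega$. To get connectedness I will pass to the Stein factorization or a covering of the base and replace $c$ by the smallest period realized on a connected fiber. Since the period homomorphism is surjective onto $c\Z$, each fiber is connected, because a disconnected fiber would give a lift to an intermediate cover whose deck group is not hit by the periods. The fibers are then exactly the leaves.

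Case (iii) is Theorem~\ref{thm:intro:normal} applied with $S=M$; the reflexive Poincaré property holds by the holomorphic Poincaré lemma. The approximants $\omega_j$ have rank-two lattice periods $\Lambda_j$. The same construction as in (ii), $F_j:M\to\C/\Lambda_j$, together with the same flow-equivariance argument, gives locally trivial fibrations over elliptic curves with connected fibers equal to the leaves.
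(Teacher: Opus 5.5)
Your proposal is correct and follows essentially the paper's route. The orbit-circle period $2\pi i$ gives (i). Circle averaging gives the hyperplane; you apply it to a representative of $\xi-[\omega_0]$ rather than of $\xi$ as in Theorem~\ref{stein:thm:hyperplane}, which is equivalent. Case (iii) is the smooth case of the approximation theorem; the paper cites Theorem~\ref{stein:thm:Cstar}, whose proof is the same.

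The one real variation is connectedness of the fibers. You argue that a disconnected fiber would make $F$ factor through a connected covering of $\C/\Per(\omega)$ of degree $>1$, confining the periods to a proper subgroup. The paper instead obtains connectedness from the period covering $\widehat M\cong L\times\C$ in the suspension principle (Theorem~\ref{stein:thm:suspension}). Both arguments work, but you should state yours in that clean form rather than via ``replacing $c$''.
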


 On Stein surfaces, fixed-point-free multiplicative actions also yield an automatic existence result.

\begin{mainresult}[Uniformization theorem]
\label{thm:intro:b}
Let $M$ be a connected Stein surface carrying a fixed-point-free holomorphic \(\mathbb C^*\)-action with infinitesimal generator \(X\).
Then:
\begin{enumerate}[label=\textup{(\roman*)},leftmargin=*]
\item  There is a closed holomorphic
one-form $\omega$ on $M$ satisfying $\omega(X)=1$. In particular
$H^1(M,\C)\ne0$ and $b_1(M)\ge1$.
\item  The universal cover $\widetilde M$ is
biholomorphic to $\C^2$ or $\C\times\mathbb D$. In suitable
global coordinates $\widetilde M\simeq\C\times\Sigma$,
$\Sigma\in\{\C,\mathbb D\}$, the lifted vector field is
$\widetilde X=\partial/\partial z$, with flow
$\widetilde\varphi_t(z,w)=(z+t,w)$.
\end{enumerate}

If, moreover  $H_1(M,\Z)$ is finitely generated,
then there is a holomorphic locally trivial fibration $F:M\to B$
with connected Stein curve fibers transverse to $X$ as follows:
\begin{enumerate}
\item $B$ is a cylinder,  $B\simeq\C^*$ when $b_1(M)=1$.
\item $B$ is a torus,  $B\simeq\C/\Lambda$ for a rank-two lattice $\Lambda\subset\C$
when $b_1(M)\ge2$. 
\end{enumerate}
The fibers are the leaves of the kernel
of a closed holomorphic one-form $\omega$  normalized by $X$.

\end{mainresult}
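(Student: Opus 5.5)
The plan has three parts: (i) by a cohomological argument, (ii) by passing to the universal cover and using a global flow box, and the fibration statement from Corollary~\ref{thm:intro:a} once $\mathcal T_X\neq\varnothing$ is known. Throughout, $\varphi_t$ denotes the flow of $X$. Since the $\C^*$-action is fixed-point free, $X$ is nowhere zero and $\varphi_{2\pi i}=\mathrm{id}$, so every orbit is a holomorphically embedded $\C^*$ or a quotient of it.

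\emph{Step 1: existence of $\omega$.} The orbits define a nonsingular holomorphic foliation $\mathcal L$ by curves on $M$. I would look for $\omega$ of the form $\omega=\theta+dh$, where $\theta$ is a holomorphic one-form with $\theta(X)=1$. Such a $\theta$ exists on a Stein manifold: the map $T^*M\to\mathcal O$, $\alpha\mapsto\alpha(X)$, is a surjective bundle map, so it splits by Cartan's Theorem~B. To make $\theta$ closed, first average $\theta$ over the compact subgroup $S^1\subset\C^*$; this makes it $X$-invariant, since $\Lie_{iX}$ and $\Lie_X$ agree on holomorphic forms, and $\theta(X)=1$ is preserved. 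Then $\Lie_X\theta=0$ gives $\iota_X d\theta=-d(\theta(X))=0$. On a surface, $d\theta$ is a two-form annihilated by $\iota_X$ with $X\neq0$, so $d\theta=0$ automatically, because $\Lambda^2T^*M$ has rank one and $\iota_X$ is injective on it. This is where dimension two is used. So $\omega=\theta$ is closed and normalized, and $\omega$ integrates to $2\pi i$ over an orbit circle $\gamma$. Hence $[\gamma]\neq0$ in $H_1$, $H^1(M,\C)\neq0$, and $b_1\ge1$.

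\emph{Step 2: uniformization.} I would lift $X$ and $\omega$ to the universal cover $\widetilde M$, which is Stein, simply connected and two-dimensional. There $\widetilde\omega=dz$ for a global holomorphic $z$ with $\widetilde X z=1$, so $\widetilde X$ is complete and nowhere zero. The leaves of $\ker dz$ are the fibers $z^{-1}(c)$, and the flow carries $z^{-1}(0)$ onto $z^{-1}(t)$. The map $\C\times z^{-1}(0)\to\widetilde M$, $(t,w)\mapsto\widetilde\varphi_t(w)$, is therefore a biholomorphism with inverse $p\mapsto(z(p),\widetilde\varphi_{-z(p)}(p))$.

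Consequently $\Sigma=z^{-1}(0)$ is a simply connected Riemann surface. It is a closed submanifold of a Stein manifold, hence Stein, so it is not $\mathbb P^1$. By the uniformization theorem $\Sigma\simeq\C$ or $\mathbb D$, and in these coordinates $\widetilde X=\partial/\partial z$.

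\emph{Step 3: fibration statement.} With $H_1$ finitely generated and $\mathcal T_X\ne\varnothing$ from Step~1, Corollary~\ref{thm:intro:a}(ii)--(iii) applies. If $b_1=1$, $\omega$ itself has discrete cyclic period group $2\pi i\Z$, since the orbit class generates modulo torsion up to a rational multiple. If $b_1\ge2$, one replaces $\omega$ by an approximant with a rank-two lattice of periods. In either case $F(p)=\int_{p_0}^p\omega \bmod \Lambda$ is a submersion, because $\omega(X)=1$. The flow of $X$ moves between fibers, so $F$ is locally trivial. The fibers are connected leaves of $\ker\omega$; they are closed curves in a Stein surface, hence Stein, and they are transverse to $X$.

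\emph{Main obstacle.} The delicate step is closedness in Step~1. It relies on $S^1$-averaging and on the rank-one argument for two-forms. I would carefully check that the averaging integral $\int_0^{2\pi}\varphi_{is}^*\theta\,ds$ stays holomorphic and normalized, which holds because $\varphi_{is}^*X=X$.
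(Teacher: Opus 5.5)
Your proposal is correct and follows the paper's proof essentially step for step. Step~1 is Proposition~\ref{torus:prop:surface} (Cartan~B, $S^1$-averaging, injectivity of $\iota_X$ on two-forms of a surface), Step~2 is Theorem~\ref{thm:uniform-time}, and Step~3 is Corollary~\ref{cor:tischler-betti} combined with the suspension Theorem~\ref{stein:thm:suspension}.

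Three small repairs are needed:
\begin{itemize}
\item Completeness of $\widetilde X$ comes from lifting the complete flow of $X$ to the simply connected cover, not from $\widetilde X z=1$; Example~\ref{ex:incomplete-period} shows the latter alone does not give completeness.
\item For $b_1=1$ the period group is $(2\pi i/d)\Z$, where $[\gamma]=de$ modulo torsion, and not necessarily $2\pi i\Z$.
\item Your exclusion of $\Sigma\simeq\mathbb P^1$ via Steinness of $\widetilde M$ relies on Stein's theorem that coverings of Stein manifolds are Stein. The paper avoids this by pushing the compact curve down to $M$ with the covering map.
\end{itemize}
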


The universal-cover conclusion in Theorem~\ref{thm:intro:b} also holds for complete holomorphic flows carrying closed normalized time forms (Theorem~\ref{thm:uniform-time}). The approximation conclusion for multiplicative actions, however, differs from the situation for additive actions, as the counterexamples and cohomological discussion below illustrate.

The periodicity assumption in our approximation results
is essential. Indeed, we construct Stein surfaces
carrying free and proper holomorphic $\mathbb C$-actions
which admit closed normalized time forms, but for
which no such form has discrete periods. This construction is established in Theorem~\ref{stein:thm:counterexample}.

We also consider holomorphic $\mathbb C^*$-actions
with singularities. In this setting, we construct
normalized logarithmic time forms associated with
holomorphic semi-invariants and obtain
discrete-period approximation on suitable
invariant Stein open subsets (Theorem~\ref{thm:intro:e}).

\subsection{Foliations invariant under transverse flows}

The hypotheses of the following theorem admit a natural
interpretation in holomorphic foliation theory. Indeed,
a nonsingular holomorphic foliation $\mathcal F$ of
codimension one on a complex manifold $M$, invariant
under a complete holomorphic flow everywhere transverse
to its leaves, is defined by a closed holomorphic
one-form $\omega$ normalized by the infinitesimal
generator $X$ of the flow, that is,
\[
\mathcal F=\ker\omega,\qquad
d\omega=0,\qquad \omega(X)=1.
\]
Conversely, any such pair $(\omega,X)$ determines
a nonsingular holomorphic foliation invariant under
the complete transverse flow generated by $X$.
This correspondence belongs to the classical framework
of invariant holomorphic foliations developed in
\cite{Scardua2003}; see also
\cite{BehagueScardua2008}.

\begin{theorem}[Closures of invariant leaves]\label{thm:intro:f}
Let $M$ be a connected complex manifold equipped with a codimension one holomorphic foliation \(\mathcal F\) which is invariant under a transverse holomorphic flow $\Phi:\C\times M\to M$.
Then we have the following possibilities for the leaves of $\mathcal F$: 

\begin{enumerate}
\item All leaves are closed;
\item Their closures are proper real-analytic Levi-flat hypersurfaces; 
\item All leaves are dense. 
\end{enumerate}
\end{theorem}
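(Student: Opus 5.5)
We pass to the universal cover and read the leaf closures off the period group of the defining form. As recalled just above the statement, invariance of $\mathcal F$ under the transverse complete flow $\Phi$ gives a closed holomorphic one-form $\omega$ with $\mathcal F=\ker\omega$ and $\omega(X)=1$, where $X$ is the generator of $\Phi$. Let $\pi:\widetilde M\to M$ be the universal covering. Then $\pi^*\omega=dh$ for a holomorphic $h:\widetilde M\to\C$, and $\widetilde X(h)=1$ for the lifted complete field $\widetilde X$.

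\textbf{Step 1: product structure upstairs.} We first show that $h$ is a holomorphic submersion whose fibers are the lifted leaves, and that $\widetilde M\cong \C\times L_0$ with $L_0=h^{-1}(0)$. The map is $(t,p)\mapsto\widetilde\varphi_t(p)$, and it is a bijection because $h(\widetilde\varphi_t(p))=h(p)+t$. In particular each fiber $h^{-1}(c)$ is a translate of $L_0$ under the flow, and the connected components of $L_0$ are the leaves of $\pi^*\mathcal F$.

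\textbf{Step 2: equivariance and the period group.} For a deck transformation $\gamma$ we have $h\circ\gamma=h+\Per_\omega(\gamma)$. Set $\Gamma=\Per_\omega(\pi_1(M))\subset\C$. Two subtleties arise. First, $L_0$ may be disconnected, so we must also account for the subgroup $\Gamma_0$ of periods of deck transformations preserving a fixed component. Second, the correct statement is therefore that each leaf $L$ corresponds to a component $\widetilde L$ of $L_0$ together with the $\pi_1$-orbit of that component. We will show that $\overline{L}$ is the image of $\{\widetilde\varphi_t(\widetilde L') : t\in\overline{\Gamma'}\}$, where $\widetilde L'$ runs over the components in the $\pi_1$-orbit and $\Gamma'$ is the corresponding period set.

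We can avoid most of this bookkeeping by working downstairs. The leaves are the level sets of the multivalued primitive, so define $T:M\to\C/\overline{\Gamma}$ locally by integrating $\omega$. Because $\overline\Gamma$ is a closed subgroup, $T$ is well defined and continuous, and $\Phi_t$ maps fibers of $T$ to fibers of $T$ (translation by $t$).

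\textbf{Step 3: case analysis.} A closed subgroup $\overline\Gamma\subset\C\cong\R^2$ is isomorphic to $\R^a\times\Z^b$ with $a+b\le2$. We split according to $a$.

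If $a=0$, then $\Gamma$ is discrete and $T:M\to\C/\Gamma$ is a holomorphic submersion. Using the flow as a transverse trivialization, $T$ is locally trivial, and the leaves are the connected components of its fibers, hence closed. This gives case~(1).

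If $a=2$, then $\overline\Gamma=\C$. Here we must show that each leaf is dense. The main obstacle is the gap between the fibers of $T$ and the leaves, which we close as follows. Near any point of $M$, a flow box $U\cong D\times(\text{plaque})$ is parametrized by the time coordinate. The leaf $L$ through $p$ returns to $U$ at times $t\in\Gamma_L$, and $\Gamma_L$ is the period group on loops based in $L$ joined by transversals. We must show that $\Gamma_L$ is dense, not merely that $\Gamma$ is. For this we use that $\Phi_s$ maps $L$ to $\Phi_s(L)$ and preserves $\omega$. Hence the set of $t$ for which $\Phi_t(L)$ meets $L$ forms a subgroup containing $\Gamma$, because any loop $\gamma$ can be homotoped in $M$ to the concatenation of a path in a leaf and a flow segment, using the product structure $\widetilde M\cong\C\times L_0$. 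This makes the holonomy pseudogroup of $\mathcal F$ on a flow transversal equal to translations by $\Gamma$. It follows that $\overline L=M$, which gives case~(3).

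If $a=1$, then $\overline\Gamma=\R v\oplus(\text{possibly }\Z u)$. By the same holonomy description, $\overline L$ is the saturation of $L$ by $\Phi_{\R v}$. Since $\Phi_{\R v}$ maps leaves to leaves, this closure is a closed set given by $\mathrm{Im}(\bar v\,\cdot)$ of the primitive equal to a constant. Locally this is $\{\mathrm{Im}(\bar v h)=c\}$ with $h$ holomorphic and $dh\neq0$, which is a real-analytic Levi-flat hypersurface foliated by leaves. It is proper because $\overline\Gamma\ne\C$. This gives case~(2).

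The main obstacle, as noted, is the identification of the holonomy of each leaf with the full period group $\Gamma$, which must hold independently of the leaf. I would prove it by showing that every class in $\pi_1(M,p)$ is represented by a leafwise path followed by a flow orbit segment, which follows from $\widetilde M\cong\C\times L_0$ together with the fact that deck transformations commute with $\widetilde\varphi$.
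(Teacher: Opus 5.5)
Your proposal is correct and follows essentially the paper's argument: the closed normalized form $\omega$, a product splitting $\C\times L_0$ on a covering, $\Phi_\lambda(L)=L$ for every period $\lambda$, identification of $\overline L$ with a fiber of $M\to\C/\overline{\Gamma}$, and the classification of closed subgroups of $\C$. The only structural difference is that you work on the universal cover, whereas the paper uses the period covering. Your worry that $L_0$ might be disconnected is unfounded, because $\widetilde M\cong\C\times L_0$ is connected. Hence the lifted leaves are exactly the slices $\{c\}\times L_0$, and each deck transformation $\gamma$ carries $\{c\}\times L_0$ onto $\{c+\Per(\gamma)\}\times L_0$. From this, $\Phi_\lambda(L)=L$ for all $\lambda\in\Gamma$ follows at once, with no homotopy argument needed. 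So does the fact that the flow saturation of every leaf is all of $M$, which you use tacitly in the dense and Levi-flat cases.
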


More precisely, let $X$ be the infinitesimal generator of the flow, and let $\omega$ be a closed holomorphic one-form on $M$ defining $\mathcal F$ and satisfying
\[
\omega(X)=1.
\]
Set $H=\overline{\operatorname{Per}(\omega)}\subset\C$, the closure of the period group of $\omega$. 
The three cases correspond, respectively, to $H$ discrete, $H$ proper and nondiscrete, and $H=\C$.
Examples~\ref{ex:intermediate} and~\ref{ex:dense} realize the latter two cases on Stein surfaces.

\noindent\emph{Foliation-theoretic meaning.}
Theorem~\ref{thm:intro:a} approximates the nonsingular invariant
foliation $\mathcal F_\omega=\ker\omega$, through foliations transverse
to and invariant under the same complete flow, by holomorphic
fiber-bundle foliations over $\C^*$ or an elliptic curve. The
approximation is uniform on compact subsets at the level of their
defining one-forms and retains $\omega_j(X)=1$ exactly. For
$b_1(M)\ge2$ the approximating bases can be taken to be elliptic
curves. This is a holomorphic, noncompact counterpart to Tischler's
approximation of real closed nonsingular one-forms, with the additional
requirement that a prescribed transverse flow remain fixed. The
geometric reason that discrete periods yield closed fibers is made
precise in Theorem~\ref{thm:global-stability} below.

\subsection{Application to jointly-integrable holomorphic hyperbolic flows}
We now turn to the relation between hyperbolic dynamics
and the period group of a normalized closed holomorphic
one-form, under a joint integrability hypothesis.

\begin{mainresult}[Period rigidity for jointly integrable hyperbolic flows]
\label{thm:intro:hyperbolic}
Let $M$ be a connected complex manifold, and let $X$ be a
nowhere-vanishing complete holomorphic vector field with flow
$\Phi:\C\times M\to M$. Assume an invariant holomorphic splitting
\[
TM=E^s\oplus\C X\oplus E^u
\]
with $E^s,E^u$ of positive rank, with $E^s\oplus E^u$
involutive, and a Hermitian metric $h$ and constants $C,\lambda>0$
for which, for all real $t\ge0$,
\[
\|D\Phi_t(v_s)\|_h\le Ce^{-\lambda t}\|v_s\|_h,
\qquad
\|D\Phi_{-t}(v_u)\|_h\le Ce^{-\lambda t}\|v_u\|_h.
\]
Let $\omega$ be the unique normalized holomorphic one-form
with $\ker\omega=E^s\oplus E^u$ (necessarily closed).
Suppose $\Phi_T(p)=p$ for some $p\in M$ and real $T>0$.
If the global dynamical stable and unstable
manifolds of $P=\Phi_T|_{L_p}$ on the joint leaf $L_p$
intersect in exactly one point for every pair of base points,
then
\[
\operatorname{Per}(\omega)=\{t\in\C:\Phi_t(p)=p\}.
\]
This group is nonzero and discrete, and $\ker\omega$
defines a holomorphic fiber-bundle foliation over
$\C/\operatorname{Per}(\omega)$.
When $M$ is Stein, this group is cyclic, so the base is $\C^*$.
\end{mainresult}

The jointly integrable framework is developed in the body of the paper, where the dynamical product hypothesis yields the stated period rigidity.

\subsubsection{Some references on former works}
In addition to \cite{Scardua2003,BehagueScardua2008}, related contributions
to the existence of holomorphic submersions and closed holomorphic
one-forms on Stein manifolds were obtained by Forstneri\v{c}
\cite{Forstneric2004} and Majcen \cite{Majcen2007}.
Our approach builds on these ideas, with particular emphasis on
the approximation of closed holomorphic one-forms normalized
by a prescribed complete vector field.

We extend this approximation problem to normal Stein spaces
using reflexive holomorphic differentials and investigate the
holomorphic fibrations arising from discrete period groups.
We also study the obstructions to normalization near singularities,
drawing on the classical theory of holomorphic
$\mathbb C^*$-actions developed by Orlik--Wagreich
\cite{OrlikWagreich1971} and continued in
\cite{CamachoMovasatiScardua2009}.
\section{Reflexive holomorphic Tischler approximation on normal Stein spaces}
\label{sec:reflexive-tischler}
We begin with the reflexive de Rham description of degree-one cohomology and then establish normalized approximation and its fibration consequence.
Throughout this section $S$ is a connected normal Stein complex space.
We use the sheaves $\Omega_S^{[1]}$ and $\mathcal Z_S^1$ and the
degree-one reflexive Poincar\'e property from
Definition~\ref{def:reflexive-poincare}, in particular the exact
sequence~\eqref{eq:reflexive-poincare}.

\begin{lemma}[Regular-locus representatives of loops]
\label{lem:regular-loops}
Let $S$ be a connected normal complex space. For a base point in
$S_{\rm reg}$, the inclusion induces a surjection
\[
\pi_1(S_{\rm reg})\longrightarrow\pi_1(S),
\]
and hence a surjection on first integral homology. In particular,
the period character defined by local holomorphic primitives on $S$
has the same image when computed on $S_{\rm reg}$.
\end{lemma}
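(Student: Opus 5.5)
The plan is to use the fact that the singular locus of a normal space has real codimension at least four, and apply a general position argument to loops. Set $\Sigma=\Sing(S)$. Since $S$ is normal, $\Sigma$ is a closed analytic subset of complex codimension at least two, and $S_{\rm reg}=S\setminus\Sigma$ is connected and dense. The first step is to use that $S$ admits a triangulation in which $\Sigma$ is a subcomplex (\L ojasiewicz triangulation of analytic sets). Then $\Sigma$ has real dimension at most $2n-4$, where $n$ is the local dimension. Since $S$ is connected and normal it is locally irreducible, hence pure dimensional, so $n$ is constant.

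Next, given a loop $\gamma$ in $S$ based at $p\in S_{\rm reg}$, we homotope it rel base point to a loop missing $\Sigma$. The argument is local.

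\begin{enumerate}[label=\textup{(\arabic*)},leftmargin=*]
\item Cover $\gamma$ by finitely many small neighborhoods $U_k$ of points of $\Sigma$. Each $U_k$ is a conic neighborhood, homeomorphic to the cone over its link $L_k$.
\item Subdivide $\gamma$ into arcs whose endpoints lie in $S_{\rm reg}$ and each of which lies in some $U_k$ or in $S_{\rm reg}$.
\item Replace each arc in $U_k$, relative to its endpoints, by an arc in $U_k\setminus\Sigma$. This requires $U_k\setminus\Sigma$ to be path connected.
\end{enumerate}

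Path connectedness in step (3) is exactly the local irreducibility of a normal space: for $x\in\Sigma$, the germ $(S,x)$ is irreducible, so $U_k\cap S_{\rm reg}$ is connected for small conic $U_k$. Since $U_k$ is contractible, the new arc is homotopic rel endpoints to the old one. Concatenating the replaced arcs gives a loop in $S_{\rm reg}$ homotopic to $\gamma$ in $S$, which proves surjectivity on $\pi_1$. Alternatively, one can invoke the standard result that the complement of a closed analytic subset of complex codimension at least one in a locally irreducible space induces a $\pi_1$-surjection. Codimension two is not even needed for surjectivity; local irreducibility suffices.

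Surjectivity on $H_1(-,\Z)$ then follows by abelianization (Hurewicz), since abelianization is right exact.

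For the final assertion, let $\eta$ be a closed reflexive form. Its period character on $S$ is defined by analytic continuation of local primitives, using the exact sequence~\eqref{eq:reflexive-poincare}, and it factors through $H_1(S,\Z)$. On a loop in $S_{\rm reg}$ this character equals $\int_\gamma\eta$, because local primitives restrict to ordinary primitives on $S_{\rm reg}$. By naturality, the character of $S_{\rm reg}$ is the composite of the surjection with the character of $S$, so the two images coincide.

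I expect the main obstacle to be the careful justification of step (3): connectedness of $U\setminus\Sigma$ for arbitrarily small neighborhoods, with a well-chosen conic structure. I would cite Milnor/Burghelea--Verona local conic structure together with normality implying that the germ is locally irreducible, so that the link minus $\Sigma$ is connected.
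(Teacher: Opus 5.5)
Your argument is correct and is essentially the paper's proof. In both, local conic structure together with normality (local irreducibility) gives arbitrarily small contractible neighborhoods with connected regular part; the loop is subdivided, each arc is replaced rel endpoints by an arc in the regular part, contractibility gives the homotopy, and the period statement follows by restricting the character.

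The one slip is in step (2). The subdivision points on $\gamma$ itself cannot always be taken in $S_{\rm reg}$, because the loop may run inside $\Sigma$ across several neighborhoods. The repair is what the paper does: first move each subdivision point to a nearby regular point in the overlap of consecutive neighborhoods, then absorb the short connecting paths into the adjacent arcs.
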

\begin{proof}
Complex spaces are locally contractible. By local conical structure
and normality, each point has arbitrarily small contractible
neighborhoods whose regular loci are connected and path connected.
Subdivide a loop into finitely many arcs in such neighborhoods.
Move the subdivision points into the regular locus in the overlaps,
keeping the base point fixed, and join the resulting endpoints by
paths in the regular part of each neighborhood. Contractibility
shows that these replacements give a homotopic loop in $S$.
This proves surjectivity on fundamental groups and on their
abelianizations. The last assertion follows by restriction of the
period character. Local contractibility also supplies the covering
spaces used below.
\end{proof}

\begin{proposition}[Reflexive de Rham]
\label{prop:reflexive-de-rham}
If a connected normal Stein space $S$ satisfies \eqref{eq:reflexive-poincare},
then taking periods induces a natural isomorphism
\[
H^0(S,\mathcal Z_S^1)/d\mathcal O(S)\ \simeq\ H^1(S,\C).
\]
In particular, periods of global closed reflexive one-forms along loops of
$S_{\rm reg}$ depend only on their images in $H_1(S,\Z)$.
\end{proposition}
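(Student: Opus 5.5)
The plan is to read off the isomorphism from the long exact cohomology sequence of the reflexive Poincar\'e sequence~\eqref{eq:reflexive-poincare},
\[
0\longrightarrow\underline{\C}_S\longrightarrow\mathcal O_S\xrightarrow{d}\mathcal Z_S^1\longrightarrow0,
\]
together with Cartan's Theorem~B. The relevant portion of the long exact sequence is
\[
H^0(S,\mathcal O_S)\xrightarrow{d}H^0(S,\mathcal Z_S^1)\xrightarrow{\delta}H^1(S,\underline{\C}_S)\longrightarrow H^1(S,\mathcal O_S).
\]
Since $S$ is Stein and $\mathcal O_S$ is coherent, Theorem~B gives $H^1(S,\mathcal O_S)=0$. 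Hence $\delta$ induces an isomorphism $H^0(S,\mathcal Z_S^1)/d\mathcal O(S)\simeq H^1(S,\underline{\C}_S)$. Here $\mathcal Z_S^1$ is only a sheaf of abelian groups (a $\C$-module), not an $\mathcal O_S$-module. This causes no difficulty, because Theorem~B is applied only to $\mathcal O_S$.

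Next I identify sheaf cohomology with constant coefficients with singular cohomology. Complex spaces are locally contractible (as used in Lemma~\ref{lem:regular-loops}) and paracompact Hausdorff, so $H^1(S,\underline{\C}_S)\simeq H^1_{\rm sing}(S,\C)$. Moreover, since $\pi_1$ abelianizes to $H_1$, we have $H^1(S,\C)\simeq\operatorname{Hom}(H_1(S,\Z),\C)$.

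The main point is to check that the composite of these identifications is the period map. Take a closed reflexive form $\eta$ and cover $S$ by open sets $U_i$ on which $\eta=df_i$ with $f_i\in\mathcal O(U_i)$; this is possible by exactness of~\eqref{eq:reflexive-poincare}. The \v{C}ech cocycle $\delta\eta$ is then $c_{ij}=f_j-f_i\in\C$. For a loop $\gamma$, subdivide it into arcs lying in successive $U_i$, as in the proof of Lemma~\ref{lem:regular-loops}. The pairing of $(c_{ij})$ with $\gamma$ is the telescoping sum of the primitive differences, which I define as the period $\int_\gamma\eta$ computed by analytic continuation of local primitives. When $\gamma\subset S_{\rm reg}$, this sum equals the ordinary line integral, since there each $f_i$ is an honest primitive. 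One also checks that this definition is independent of the choices made and is homotopy invariant, because the $f_i$ are unique up to locally constant functions on connected overlaps.

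For the final statement, let $\gamma,\gamma'$ be loops in $S_{\rm reg}$ with the same class in $H_1(S,\Z)$. The periods of $\eta$ along them are $\langle\delta\eta,[\gamma]\rangle$ and $\langle\delta\eta,[\gamma']\rangle$, which depend only on the image class in $H_1(S,\Z)$. By Lemma~\ref{lem:regular-loops}, every class in $H_1(S,\Z)$ is represented by such loops. Naturality with respect to holomorphic maps follows from the construction. The step requiring most care is this compatibility of \v{C}ech and singular descriptions. I would handle it by passing to a cover fine enough that the open sets and their finite intersections are contractible, or at least that the primitives are unique up to constants on connected intersections. In that setting the standard de Rham–Weil comparison carries over verbatim.
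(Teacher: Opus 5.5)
Your proposal is correct and follows the paper's proof: you take the long exact sequence of the reflexive Poincar\'e sequence, use Cartan's Theorem~B to kill $H^1(S,\mathcal O_S)$, and identify the connecting map with the period class, using that local primitives differ by constants. Your telescoping computation on loops in $S_{\rm reg}$ and the \v{C}ech--singular comparison spell out what the paper compresses into one sentence. The one nit is that $H^1(S,\C)\simeq\operatorname{Hom}(H_1(S,\Z),\C)$ comes from the universal coefficient theorem, not from abelianizing $\pi_1$.
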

\begin{proof}
The cohomology sequence of \eqref{eq:reflexive-poincare} yields
\[
H^0(S,\mathcal O_S)\xrightarrow{d}H^0(S,\mathcal Z_S^1)
\longrightarrow H^1(S,\underline{\C}_S)
\longrightarrow H^1(S,\mathcal O_S).
\]
The last term vanishes by Cartan's Theorem B. The connecting map is the
usual period class: local primitives differ by locally constant functions.
This gives the isomorphism and the assertion about loops.
\end{proof}

The degree-one reflexive de Rham description makes it possible to approximate normalized classes without changing the prescribed vector field.

\begin{theorem}[Reflexive Tischler approximation]
\label{thm:normal-reflexive-approx}
Let $S$ be a connected normal Stein space satisfying the degree-one
reflexive Poincar\'e property, and assume $H_1(S,\Z)$ is finitely generated.
Let $X$ generate a holomorphic $\C^*$-action, with complex-time flow
$\varphi_{2\pi i}=\mathrm{id}$. Suppose a global closed reflexive one-form
$\omega\in H^0(S,\mathcal Z_S^1)$ satisfies $\omega(X)=1$ on $S_{\rm reg}$.
Then there exist $\omega_j\in H^0(S,\mathcal Z_S^1)$ such that
\[
\omega_j(X)=1,\qquad
\omega_j\longrightarrow\omega\quad\text{uniformly on compact subsets of }S_{\rm reg},
\]
and the period group of $\omega_j|_{S_{\rm reg}}$ is discrete in $\C$.
When $b_1(S)\ge2$, the approximants may be chosen with rank-two lattice
periods. By Corollary~\ref{cor:reflexive-fibration}, they define holomorphic locally trivial fibrations of $S$ with connected Stein analytic fibers, possibly singular.
\end{theorem}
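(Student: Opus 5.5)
The approach is to work entirely in cohomology via Proposition~\ref{prop:reflexive-de-rham}, and then realize perturbed classes by adding exact-free closed tangent corrections. First I identify the normalization constraint as a single linear condition on classes. Let $\gamma$ be an orbit circle $t\mapsto\varphi_{it}(p)$, $t\in[0,2\pi]$, through a point $p\in S_{\rm reg}$. Since $\omega(X)=1$, we get
\[
\int_\gamma\omega=\int_0^{2\pi}\omega(iX)\,dt=2\pi i .
\]
The class $[\gamma]\in H_1(S,\Z)$ is independent of $p$, because $S_{\rm reg}$ is connected and the orbit loops vary continuously; if $X$ vanishes at $p$ the loop is constant, which is impossible here. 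Hence $[\gamma]$ is non-torsion, and every normalized class lies in the affine hyperplane $A=\{\xi:\langle\xi,[\gamma]\rangle=2\pi i\}$. Conversely, I must show that every class $\xi\in A$ is represented by a normalized closed reflexive form. Write $\xi-[\omega]=[\beta_0]$ with $\beta_0\in H^0(S,\mathcal Z^1_S)$. The function $\beta_0(X)$ is holomorphic on $S_{\rm reg}$ and $\C^*$-invariant, since $\mathcal L_X\beta_0=d(\beta_0(X))$ and $\varphi_{2\pi i}=\mathrm{id}$. Averaging $\beta_0$ over the circle action, $\bar\beta=\frac1{2\pi}\int_0^{2\pi}\varphi_{i s}^*\beta_0\,ds$, gives a cohomologous closed reflexive form with $\mathcal L_X\bar\beta=0$. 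Then $c=\bar\beta(X)$ satisfies $dc=\mathcal L_X\bar\beta-i_Xd\bar\beta=0$, so $c$ is a constant. Integrating over $\gamma$ gives $2\pi i c=\langle\xi-[\omega],[\gamma]\rangle=0$, so $c=0$. Therefore $\omega+\bar\beta$ is normalized and represents $\xi$. Note that $\bar\beta$ lies in $H^0(S,\mathcal Z^1_S)$, because pullback by the biholomorphisms $\varphi_{is}$ preserves reflexive closed forms on the normal space.

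Next comes the arithmetic step. Choose a basis $e_1=[\gamma]/k,e_2,\dots,e_b$ of $H_1(S,\Z)/\mathrm{tors}\cong\Z^b$, with $b=b_1(S)$. Here $[\gamma]=k e_1$ for some $k\ge1$, possibly after adjusting the basis by divisibility. A class $\xi$ is determined by its periods $z_i=\langle\xi,e_i\rangle\in\C$, and the constraint fixes $z_1=2\pi i/k$. If $b=1$, the class $[\omega]$ is already determined, so its period group $\Z\cdot 2\pi i/k$ is discrete, and I take $\omega_j=\omega$. If $b\ge2$, fix a nonreal $\tau\in\C$ and consider the rank-two lattice $\Lambda_N=\frac{2\pi i}{kN}(\Z+\Z\tau)$. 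Since $\Q+\Q\tau$ is dense in $\C$, I can choose $z^{(j)}_i\in\Lambda_{N_j}$ with $z^{(j)}_i\to\langle[\omega],e_i\rangle$ for $i\ge2$. I also keep $z_1=2\pi i/k\in\Lambda_N$, and I include at least one entry in a non-real direction so that the period group is genuinely of rank two; this is possible by a small perturbation. The resulting period group is a subgroup of $\Lambda_{N_j}$ of full rank, hence a rank-two lattice.

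The main obstacle is converting convergence of classes into compact-open convergence of the normalized forms on $S_{\rm reg}$. I would fix closed reflexive forms $\beta_2,\dots,\beta_b$ with $\langle[\beta_i],e_l\rangle=\delta_{il}$ for $l\ge2$ and $\langle[\beta_i],e_1\rangle=0$, each averaged and normalized to be $X$-tangent as above. I then set $\omega_j=\omega+\sum_{i\ge2}(z_i^{(j)}-\langle[\omega],e_i\rangle)\beta_i$. This is a finite linear combination with coefficients tending to $0$, which gives uniform convergence on compacts of $S_{\rm reg}$, exact normalization, and closedness. The period group on $S_{\rm reg}$ agrees with the one computed on $H_1(S,\Z)$ by Lemma~\ref{lem:regular-loops}. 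Torsion classes contribute zero periods, since the periods form a homomorphism into the torsion-free group $\C$. The fibration statement is then quoted from Corollary~\ref{cor:reflexive-fibration}.
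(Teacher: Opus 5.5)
Your proposal is correct and follows essentially the paper's argument. The orbit integral cuts out the affine hyperplane $\langle\xi,[\gamma]\rangle=2\pi i$, and you approximate $[\omega]$ inside it by lattice-valued classes, realize the differences by closed reflexive forms via Proposition~\ref{prop:reflexive-de-rham}, and restore exact normalization through circle averaging, Cartan's formula and the orbit integral. Averaging a fixed adapted family $\beta_i$ with $\beta_i(X)=0$ once, before forming $\omega+\sum_i c_{ij}\beta_i$, agrees with the paper's averaging of each $\alpha_j$ by linearity and the circle-invariance of $\omega$, and it makes the compact-open convergence immediate. One aside is false but unused: $\beta_0(X)$ need not be $\C^*$-invariant before averaging (for $\beta_0=dz$ on $\C^*\times\C$ with $X=z\partial_z$ one gets $\beta_0(X)=z$), so you should simply delete that claim.
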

\begin{proof}
Let $\gamma$ be the homology class in $H_1(S,\Z)$ of any circle orbit
in $S_{\rm reg}$. This locus is invariant under the action. Normalization
implies $\int_\gamma\omega=2\pi i$, so $\gamma$ is non-torsion. Since
$H_1(S,\Z)$ is finitely generated, the normalization condition
selects the affine hyperplane
\[
\mathcal H_\gamma=\{\xi\in H^1(S,\C):
\langle\xi,\gamma\rangle=2\pi i\}.
\]
This hyperplane contains
a dense set of classes whose periods belong to a common discrete
Gaussian rational lattice (which may depend on the class). Indeed,
one chooses a rational basis for the free part of integral homology,
solves the single rational affine constraint, and approximates the remaining
real and imaginary coordinates by rationals. For $b_1(S)\ge2$, one may
ensure that the period image contains two real-linearly independent elements by a small choice of a
nonreal free coordinate.

Choose classes $\xi_j\to[\omega]$ in that rational locus. By
Proposition~\ref{prop:reflexive-de-rham}, represent a complex basis of
$H^1(S,\C)$ by global closed reflexive forms. Small linear combinations
of these representatives give closed forms $\alpha_j\to\omega$ on compact
subsets of the smooth locus and with $[\alpha_j]=\xi_j$.

Average $\alpha_j$ over the circle subgroup of the given $\C^*$-action.
The resulting $\omega_j$ are closed reflexive holomorphic forms, retain
classes $\xi_j$ because the circle action is homotopic to the identity,
and converge to $\omega$: the latter is invariant by Cartan's formula
since $d\omega=0$ and $\omega(X)=1$ on the smooth locus.
The averages are invariant, so Cartan's formula gives
$d(\omega_j(X))=0$ on $S_{\rm reg}$; this locus is connected by normality.
The orbit integral then gives
$2\pi i\,\omega_j(X)=\langle\xi_j,\gamma\rangle=2\pi i$.
Thus the normalization is exact. The selected cohomology classes have
discrete period groups, and local exactness together with
Lemma~\ref{lem:regular-loops} identifies the periods on
$S_{\rm reg}$ with those on $S$.
\end{proof}

Discrete periods have a direct geometric interpretation, which we now formulate for normal spaces.

\begin{corollary}[Reflexive Tischler fibration]
\label{cor:reflexive-fibration}
Let $S$ be a connected normal Stein space satisfying the degree-one
reflexive Poincar\'e property, and let $X$ be the infinitesimal
generator of a holomorphic $\C^*$-action on $S$.
Suppose that $\omega\in H^0(S,\mathcal Z_S^1)$ is a closed
reflexive holomorphic one-form with $\omega(X)=1$ on $S_{\rm reg}$
and discrete period group $\Lambda=\Per(\omega)\subset\C$.
Then integration induces a surjective holomorphic locally trivial
fiber bundle, in the category of complex analytic spaces,
\[
F:S\longrightarrow\C/\Lambda,
\]
whose fibers are connected Stein analytic spaces, possibly singular.
The identity $F^*(dt)=\omega$ holds on $S_{\rm reg}$, and the
holomorphic flow satisfies
\[
F(\varphi_t(p))=F(p)+[t]\quad(t\in\C).
\]
Moreover, $2\pi i\in\Lambda$, so $\C/\Lambda$ is biholomorphic
either to $\C^*$ or to an elliptic curve.
No finite-generation hypothesis on $H_1(S,\Z)$ is required.
\end{corollary}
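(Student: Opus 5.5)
Fix a base point $p_0\in S_{\rm reg}$ and build $F$ from local primitives of $\omega$. The degree-one reflexive Poincar\'e property gives, near each point of $S$, singular points included, a holomorphic $f$ with $df=\omega$ on the regular part. Two such primitives on a connected overlap differ by a constant. Continuing primitives along paths starting at $p_0$ therefore produces a multivalued holomorphic function $\int_{p_0}^p\omega$. Its monodromy along a loop is the period, which by Lemma~\ref{lem:regular-loops} and Proposition~\ref{prop:reflexive-de-rham} depends only on the class in $H_1(S,\Z)$ and lies in $\Lambda$. Hence $F(p)=\bigl[\int_{p_0}^p\omega\bigr]\in\C/\Lambda$ is a well-defined holomorphic map on all of $S$, and $F^*(dt)=\omega$ on $S_{\rm reg}$. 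No finiteness of $H_1$ is used, since only discreteness of $\Lambda$ matters.

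Next come the flow identity and $2\pi i\in\Lambda$. For fixed $p$, set $g(t)=F(\varphi_t(p))$. If $\varphi_t(p)\in S_{\rm reg}$, then $g'(t)=\omega(X)=1$. The set of $t$ with $\varphi_t(p)\in S_{\rm reg}$ is invariant, so either the whole orbit is regular or it lies in $\Sing S$. In the second case we argue by density: $F(\varphi_t(q))-F(q)-[t]$ is continuous in $q$ and vanishes for $q$ in the dense set $S_{\rm reg}$, so it vanishes for every $q$. This gives $F(\varphi_t(p))=F(p)+[t]$. Taking $t=2\pi i$ and $\varphi_{2\pi i}=\mathrm{id}$ yields $[2\pi i]=0$, so $2\pi i\in\Lambda$, and in particular $\Lambda\neq0$. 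A discrete subgroup containing a nonzero element has rank one or two, so $\C/\Lambda$ is $\C^*$ or an elliptic curve. Surjectivity is immediate, since the image of an orbit is all of $\C/\Lambda$.

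Local triviality comes from the flow. Take a small disc $D\subset\C$ that injects into $\C/\Lambda$, centered at $c=F(p)$, and set $S_c=F^{-1}(c)$. The map
\[
\Psi: S_c\times D\to F^{-1}(D),\qquad (q,s)\mapsto \varphi_s(q)
\]
is holomorphic. Its inverse is $x\mapsto(\varphi_{-s(x)}(x),s(x))$, where $s(x)\in D$ is the unique lift of $F(x)-c$; this inverse is also holomorphic. Translating by the flow shows that all fibers are biholomorphic to $S_c$, so $F$ is a holomorphic locally trivial bundle in the analytic category. Each fiber is a closed analytic subspace of the Stein space $S$, hence Stein.

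The main obstacle is connectedness of the fibers. I would pass to the covering $\hat S\to S$ associated with the kernel of the period character $\pi_1(S)\to\Lambda$. On $\hat S$ the form $\omega$ has a single-valued primitive $\hat F:\hat S\to\C$ with $\hat F(\varphi_t(x))=\hat F(x)+t$. The lifted flow gives $\hat S\cong \hat F^{-1}(0)\times\C$, so $\hat F^{-1}(0)$ is connected because $\hat S$ is connected. The deck group is $\Lambda$, acting by translation on the $\C$-factor, so $S_c$ is the image of the connected set $\hat F^{-1}(\tilde c)$ and is therefore connected. Two points need care here. First, the covering exists by local contractibility, as noted in Lemma~\ref{lem:regular-loops}. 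Second, the lifted flow is complete, which follows from path lifting of the orbit maps $t\mapsto\varphi_t(p)$.
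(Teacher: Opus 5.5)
Your proposal is correct and follows essentially the paper's route: glue local primitives modulo $\Lambda$, derive $F(\varphi_t(p))=F(p)+[t]$ from $\omega(X)=1$, and get connected Stein fibers from the splitting $\widehat S\simeq \widehat F^{-1}(0)\times\C$ on the period covering; the paper realizes that covering as the fiber product $S\times_{\C/\Lambda}\C$, where the lifted flow $(p,z)\mapsto(\varphi_t(p),z+t)$ is explicit and your path-lifting step is unnecessary. The only differences are minor: you build the local trivializations directly on $S$ via $(q,s)\mapsto\varphi_s(q)$ rather than descending the covering's product structure, and you obtain $2\pi i\in\Lambda$ from $\varphi_{2\pi i}=\mathrm{id}$ rather than from the orbit-circle integral.
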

\begin{proof}
By the degree-one reflexive Poincar\'e property, the form $\omega$
has local holomorphic primitives on $S$; differences of primitives
are locally constant. Their additive monodromy is the period
homomorphism $\pi_1(S)\to\C$ of $\omega$, whose image is $\Lambda$.
The local primitive property ensures that the period homomorphism
on the regular locus factors through $\pi_1(S)$: small loops
around the singular set contribute no additional monodromy.
Equivalently, the local primitives define a class in $H^1(S,\C)$
whose restriction gives the periods on $S_{\rm reg}$.
Thus the local primitives glue modulo $\Lambda$ into a holomorphic
map $F:S\to\C/\Lambda$ satisfying $F^*(dt)=\omega$ on the
regular locus. Here Lemma~\ref{lem:regular-loops} identifies the
period images on $S$ and $S_{\rm reg}$.

If $f$ is a local holomorphic primitive of $\omega$, then
$X(f)=1$ on the regular locus and hence throughout its domain.
The local holomorphic flow therefore gives product charts transverse
to the analytic level sets of $f$, including at singular points.

Let $\varphi_t$ be the complete holomorphic flow. Since
$\omega(X)=1$ on the regular locus and both sides extend
holomorphically, the time derivative of $F(\varphi_t(p))$ is the
invariant unit tangent vector on $\C/\Lambda$. Hence
$F(\varphi_t(p))=F(p)+[t]$ throughout $S$, which also gives
surjectivity.

Form the pullback of the covering $\C\to\C/\Lambda$:
\[
\widehat S=S\times_{\C/\Lambda}\C,
\qquad T:\widehat S\to\C.
\]
This covering is connected, since its monodromy surjects onto
$\Lambda$ by the definition of the period group. The lifted
complete flow is given by
$\widehat\varphi_t(p,z)=(\varphi_t(p),z+t)$, and satisfies
$T(\widehat\varphi_t(q))=T(q)+t$.
Setting $L=T^{-1}(0)$, we obtain mutually inverse holomorphic
maps
\[
L\times\C\longrightarrow\widehat S,\qquad
(q,t)\longmapsto\widehat\varphi_t(q),
\]
and
\[
\widehat S\longrightarrow L\times\C,\qquad
q\longmapsto\bigl(\widehat\varphi_{-T(q)}(q),T(q)\bigr).
\]
Thus $\widehat S\simeq L\times\C$ and $L$ is connected.
Over a simply covered open disc in $\C/\Lambda$, this product
structure descends to a holomorphic trivialization of $F$ with
fiber $L$. The fiber is a closed analytic subspace of the Stein
space $S$ and hence Stein. Finally, the orbit circle in the
regular locus has period $2\pi i$, so $2\pi i\in\Lambda$.
A discrete subgroup of $\C$ containing $2\pi i$ has rank one
or two, with quotient respectively $\C^*$ or an elliptic curve.
\end{proof}

\begin{proof}[Proof of Theorem~\ref{thm:intro:normal}]
Under the hypotheses of Theorem~\ref{thm:intro:normal},
Theorem~\ref{thm:normal-reflexive-approx} provides closed,
normalized reflexive forms $\omega_j\to\omega$ on compact subsets
of $S_{\rm reg}$, with discrete periods and, when $b_1(S)\ge2$,
with rank-two lattice periods. This proves every approximation
assertion of the introductory theorem. For completeness, its
geometric consequence follows by applying
Corollary~\ref{cor:reflexive-fibration} to each $\omega_j$:
we obtain a holomorphic locally trivial fibration
$S\to\C/\Per(\omega_j)$ with connected Stein analytic fibers.
Since $2\pi i\in\Per(\omega_j)$, the base is $\C^*$ or an
elliptic curve, according to the rank of the period group.
\end{proof}

For the geometric form we have:

\begin{proof}[Proof of Theorem~\ref{thm:geometric-reflexive-tischler}]
By Theorem~A, there exists a sequence of closed
normalized reflexive holomorphic one-forms
$\omega_j$ with discrete period groups
\[
\Lambda_j=\operatorname{Per}(\omega_j)
\]
such that $\omega_j\to\omega$ uniformly on
compact subsets of $S_{\mathrm{reg}}$.

By Corollary~\ref{cor:reflexive-fibration},
integration of each $\omega_j$ induces a
holomorphic locally trivial fibration
\[
F_j:S\longrightarrow\mathbb C/\Lambda_j
\]
with connected Stein analytic fibers,
satisfying
\[
F_j^*(dt_j)=\omega_j
\quad\text{on }S_{\mathrm{reg}}.
\]
The required convergence follows immediately.

Since $\omega_j(X)=1$, integration along
an orbit circle gives
\[
2\pi i\mathbb Z\subseteq\Lambda_j.
\]
Thus each discrete group $\Lambda_j$ has
rank one or two. If $b_1(S)\geq2$, Theorem~A
allows the approximating forms to be chosen
with rank-two lattice periods, yielding
elliptic curves as bases.

Finally, the foliations induced by the
fibrations are defined on $S_{\mathrm{reg}}$
by $\ker\omega_j$, and the convergence
$\omega_j\to\omega$ gives the asserted
approximation of their normalized defining
forms.
\end{proof}

The local primitive property is available beyond quotient singularities; the next proposition supplies a useful class of examples.

\begin{proposition}[A non-quotient class with local primitives]
\label{prop:isolated-ici-poincare}
A normal complex space with isolated complete-intersection singularities
of complex dimension at least three satisfies the degree-one reflexive
Poincar\'e property. The same is true for its product with a complex
manifold.
\end{proposition}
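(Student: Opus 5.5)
The property is local, so it suffices to work near a singular point $p$, which we may take to be the only singular point of a small Stein neighbourhood $U$. Let $\alpha$ be a closed reflexive one-form on $U$; we must find $f\in\mathcal O(U)$ with $df=\alpha$ on $U_{\rm reg}=U\setminus\{p\}$. I would proceed in two steps. First I produce a primitive on the punctured neighbourhood. Then I extend it across $p$. At smooth points the classical holomorphic Poincar\'e lemma already gives exactness, and over a product with a manifold we reduce to this case as explained at the end.

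\emph{Step 1: topological vanishing.} Choose $U$ to be a Milnor-type conical neighbourhood $U=V\cap B_\varepsilon$ of an isolated complete-intersection singularity of dimension $n\ge3$. Then $U\setminus\{p\}$ retracts onto the link $K$. By Milnor and Hamm, $K$ is a $(n-2)$-connected manifold of real dimension $2n-1$. Since $n\ge3$, $K$ is simply connected, so $H^1(U\setminus\{p\},\C)=0$. By the classical holomorphic Poincar\'e lemma on the manifold $U_{\rm reg}$, the closed form $\alpha$ has local primitives there. These glue, with no monodromy, to a global $f_0\in\mathcal O(U\setminus\{p\})$ satisfying $df_0=\alpha$.

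\emph{Step 2: extension.} The point $p$ has codimension $n\ge 2$ in the normal space $U$, so Riemann extension (normality gives $j_*\mathcal O_{U_{\rm reg}}=\mathcal O_U$) extends $f_0$ to $f\in\mathcal O(U)$. Then $df$ is a K\"ahler differential agreeing with $\alpha$ on $U_{\rm reg}$, so $df=\alpha$ as reflexive forms. Exactness of \eqref{eq:reflexive-poincare} at $\mathcal O_S$ and at $\underline{\C}_S$ is clear, because $U_{\rm reg}$ is connected by normality. I expect the main subtlety to lie in Step 1: one must invoke the correct connectivity statement for links of isolated complete intersections (Hamm), and note that it fails for $n=2$, where the link can have nontrivial $\pi_1$. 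This failure is exactly why the dimension bound is needed.

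\emph{Products.} Near a singular point of $S\times N$ with $N$ a complex manifold, the germ is $(V,p)\times(\C^m,0)$, and its singular locus $\{p\}\times\C^m$ has codimension $n\ge3$. Take $W=U\times\Delta$ with $\Delta$ a polydisc. Then $W_{\rm reg}=(U\setminus\{p\})\times\Delta$ is homotopy equivalent to $K$, hence simply connected. So again a closed holomorphic form on $W_{\rm reg}$ has a global primitive, and by normality of the product, with singular set of codimension $\ge2$, this primitive extends holomorphically to $W$. This gives the property for products.
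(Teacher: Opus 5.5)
Your proposal is correct and follows essentially the same route as the paper's proof. In both, Hamm's connectivity theorem makes the small punctured neighbourhood (and its product with a polydisc) simply connected, so the closed form has a single-valued primitive on the regular locus, and normality extends that primitive across the singular set of codimension $\ge 2$. Your extra details (the conic structure theorem, the $(n-2)$-connectedness of the link, and exactness at the other terms of the sequence via connectedness of $U_{\rm reg}$) only make explicit what the paper states briefly.
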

\begin{proof}
The required local topological vanishing for isolated complete intersections
is recorded in \cite[Proposition~3.3]{Jorder2014}. We recall the elementary
argument. At a smooth point this is the holomorphic Poincar\'e lemma. At an isolated
complete-intersection singularity of dimension $n\ge3$, a sufficiently
small punctured representative is simply connected by Hamm's
connectivity theorem for isolated complete intersections
\cite{Hamm1971} (for hypersurfaces, see Milnor\cite{Milnor1968}). A closed holomorphic one-form there has a
single-valued holomorphic primitive by path integration. Normality
extends the primitive across the isolated point, and its differential
agrees with the original reflexive form on the smooth locus.
For a product with a complex manifold, integrate first on the punctured
singular factor and on small coordinate polydiscs in the smooth factor;
the resulting local primitive extends across the product singular locus
by normality. Equivalently, use the local product decomposition of the
punctured neighborhoods and the ordinary Poincar\'e lemma in the smooth
directions.
\end{proof}
The following example illustrates the approximation theorem
on a singular Stein space that is not locally a quotient
of a smooth complex manifold by a finite group.
\begin{example}[Non-quotient singular Stein approximation]
\label{ex:nonquotient-nodal-product}
Let $Y=\{xy-zw=0\}\subset\C^4$, the three-dimensional ordinary double
point, and let $S=Y\times(\C^*)^2$. It is a normal affine Stein space
with singularities and satisfies the degree-one reflexive Poincar\'e
property by Proposition~\ref{prop:isolated-ici-poincare}. The local
class group of the threefold node is infinite cyclic, so its singular
germ is not a finite smooth quotient germ (which is $\Q$-factorial).
With coordinates $(u,v)$ on the torus factor, set
\[
X=u\partial_u,\qquad
\omega_\lambda=\frac{du}{u}+\lambda\frac{dv}{v}.
\]
If $\lambda\in\R\setminus\Q$, then
$\Per(\omega_\lambda)=2\pi i(\Z+\lambda\Z)$ is nondiscrete.
For $\lambda_j\in\Q+i\Q$ with nonzero imaginary parts tending to
$\lambda$, the $\omega_{\lambda_j}$ are closed and normalized with
rank-two lattice periods and converge to $\omega_\lambda$.
This gives an explicit case beyond finite smooth quotients.
\end{example}

\section{Equivariant approximation on finite Stein quotients}
Finite quotients provide a complementary setting in which approximation is carried out equivariantly on a smooth Stein covering space before descent.

\label{sec:finite-quotient}
The preceding local obstructions concern the \emph{existence} of normalized forms. There is nevertheless a substantial class of genuinely singular normal Stein spaces where the approximation procedure itself survives. We give a finite-equivariant version of the smooth theorem, and explicitly distinguish descent of closed forms and periods from the stronger assertion that their leaves form bundles on the singular quotient.

\begin{lemma}
\label{lem:reflection-one-form-descent}
Let $V$ be a complex vector space, let $K\subset\mathrm{GL}(V)$ be
finite, and suppose that $V/K$ is smooth near the image of the origin.
Then every $K$-invariant holomorphic one-form defined near the origin
is the pullback of a unique holomorphic one-form on the corresponding
quotient germ. Moreover, for a sequence of such invariant forms,
compact-open convergence upstairs implies compact-open convergence
of their descended forms downstairs.
\end{lemma}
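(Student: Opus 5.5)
By the Chevalley--Shephard--Todd theorem, smoothness of $V/K$ at the image of the origin forces $K$ to be generated by pseudo-reflections. Once this is in place, I will work with explicit coordinates and reduce the statement to a descent statement about invariant functions.

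\emph{Step 1: coordinates.} The quotient map is $\pi=(f_1,\dots,f_n):V\to\C^n$. Here the $f_i$ are homogeneous basic invariants of degrees $d_i$, chosen so that $\C[V]^K=\C[f_1,\dots,f_n]$. The map $\pi$ identifies $V/K$ with $\C^n$, and holomorphic $K$-invariant functions near $0$ are exactly the holomorphic functions of $(f_1,\dots,f_n)$. This is the analytic version of the invariant theory, obtained by averaging power series over $K$ and expanding in the invariant polynomials.

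\emph{Step 2: uniqueness.} The map $\pi$ is a finite branched covering, unramified off the union of the reflecting hyperplanes, which is a proper analytic subset. There $\pi$ is a local biholomorphism. If $\pi^*\eta=\pi^*\eta'$, then $\eta=\eta'$ on the dense open set of regular values, and hence everywhere by continuity.

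\emph{Step 3: existence.} Let $\alpha$ be a $K$-invariant holomorphic one-form near $0$. The key point is Solomon's theorem on invariant differential forms, whose analytic local version gives
\[
\bigl(\Omega^1_V\bigr)^K=\bigoplus_i \mathcal O_V^K\, df_i .
\]
Writing $\alpha=\sum_i a_i\,df_i$ with $a_i$ invariant, Step~1 gives $a_i=b_i\circ\pi$, and then $\eta=\sum_i b_i\,dy_i$ satisfies $\pi^*\eta=\alpha$.

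If I prefer not to cite Solomon, I will argue directly. Off the branch locus, $\alpha$ descends to a holomorphic form $\eta$ on $\C^n\setminus\pi(\text{hyperplanes})$, because $\pi$ is locally biholomorphic there and $\alpha$ is invariant. The coefficients of $\eta$ are $\eta(\partial_{y_i})$. They are bounded near generic points of the branch divisor, since $\alpha$ is holomorphic upstairs and the local model is $z\mapsto z^m$. I then have to check that the coefficient expressed through $dz$ does not acquire a pole. In that local model an invariant form reads $g(z^m)\,z^{m-1}dz+\dots$ in the reflection direction, which equals $\tfrac1m g(w)\,dw$. So the coefficients extend holomorphically across the codimension-one branch locus by the Riemann extension theorem, and then across codimension two by Hartogs.

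This local verification at generic points of the reflecting hyperplanes is the main obstacle. It uses that the isotropy group there is cyclic, generated by a single reflection, and that invariance forces the $dz$-coefficient to have the form $z^{m-1}g(z^m)$.

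\emph{Step 4: convergence.} Suppose $\alpha_k\to\alpha$ uniformly on compacta. Then the descended forms converge uniformly on compacta away from the branch locus, because $\pi$ is a local biholomorphism there. Near the branch divisor I use the maximum principle on polydiscs. The coefficients $b_{i,k}$ are holomorphic, and on a slightly shrunken polydisc their sup is controlled by the sup over a torus avoiding the image divisor. Such tori can be chosen because that image is a proper analytic set, so a generic distinguished boundary misses it. On these tori the convergence is already known, so it propagates to the whole polydisc. Alternatively, the linear map $\alpha\mapsto\eta$ between Fréchet spaces is continuous by the closed graph theorem: if $\alpha_k\to\alpha$ and $\eta_k\to\eta'$, then $\pi^*\eta'=\alpha$, so $\eta'=\eta$ by Step~2.
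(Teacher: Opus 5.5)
Your direct argument is the paper's proof: Chevalley--Shephard--Todd, descent on the unramified locus, the cyclic-stabilizer computation $a_1=z_1^{m-1}b_1(z_1^m,z')$ giving $\tfrac1m b_1\,dw_1$ at generic branch points, Hartogs across the codimension-two remainder, uniqueness by density, and a maximum-modulus argument for convergence. The paper cites Solomon only as a consistency check. The ``analytic local version'' you invoke is not what Solomon proves, since his theorem concerns polynomial forms. Once Step~1 is granted, that version is equivalent to the lemma itself. So it is your direct computation that actually establishes existence, and that computation is sound.

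One step is wrong as justified. In Step~4 you claim that a generic distinguished boundary misses the branch image ``because that image is a proper analytic set''. This fails for a hypersurface. A torus has real dimension $n$ and the branch image has real dimension $2n-2$. For $n\ge2$ their intersection generically has dimension $n-2\ge0$ and does not disappear under small perturbation of radii and centres. For instance, the curve $\{y_2=y_1+y_1^2\}\subset\C^2$ meets $\{|y_1|=r_1,\ |y_2|=r_2\}$ whenever $r_1(1-r_1)\le r_2\le r_1(1+r_1)$, because $|y_2|=r_1|1+y_1|$ sweeps this interval as $\arg y_1$ varies. That is an open set of radii.

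Suitable polydiscs do exist, but only after an adapted choice. Take linear coordinates in which a local equation of the set is regular in $y_n$. Weierstrass preparation then gives $P'\times\{|y_n-c_n|<r_n\}$ whose side $\overline{P'}\times\{|y_n-c_n|=r_n\}$ avoids the set. The paper lightens this differently. It first gets convergence at generic branch points from the explicit local formula: divide $a_{1,k}$ by $z_1^{m-1}$ using the maximum principle in $z_1$. After that, only the codimension-two exceptional set has to be avoided.

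The cleanest repair is the alternative you already give. Your closed-graph argument is correct once you fix a $K$-invariant neighbourhood $U$. Work with invariant forms on $U$, a closed subspace of a Fr\'echet space, and with forms on $\pi(U)$. Continuity of $\pi^*$ and its injectivity from Step~2 close the graph, so no choice of tori is needed. Make that the main argument.
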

\begin{proof}
Since the quotient $V/K$ is smooth near the image of
the origin, the Chevalley--Shephard--Todd theorem
implies that $K$ is generated by complex
pseudoreflections, that is, elements whose fixed-point
subspaces are hyperplanes
(see \cite{ChevalleyShephardTodd}).

Let $D$ denote the union of the reflecting
hyperplanes of $K$. Away from the image of $D$,
the quotient map $\pi:V\to V/K$ is unramified,
and every $K$-invariant holomorphic one-form
descends uniquely to a holomorphic one-form
on the corresponding open subset of $V/K$.

We now examine the extension across the branch
divisor. At a generic point of a component of
$D$, the stabilizer is cyclic, and suitable
holomorphic coordinates may be chosen so that
its action is given by
\[
(z_1,z')\longmapsto(\zeta z_1,z'),
\qquad \zeta^m=1.
\]
The corresponding quotient coordinates are
\[
w_1=z_1^m,\qquad w'=z'.
\]

Let
\[
\alpha=a_1(z)\,dz_1+
\sum_{j>1}a_j(z)\,dz_j
\]
be a $K$-invariant holomorphic one-form.
Invariance under the cyclic stabilizer implies
that its coefficients have the form
\[
a_1(z)=z_1^{m-1}b_1(z_1^m,z'),
\qquad
a_j(z)=b_j(z_1^m,z')
\quad (j>1),
\]
where the functions $b_j$ are holomorphic.
Consequently,
\[
\alpha
=
\frac{1}{m}b_1(w)\,dw_1+
\sum_{j>1}b_j(w)\,dw_j,
\]
showing that the descended form extends
holomorphically across the generic points
of every branch divisor.

The remaining exceptional subset of the
smooth quotient germ has complex codimension
at least two. Since the quotient is smooth,
the coefficient functions of the descended
form extend holomorphically across this
subset by Hartogs' extension theorem.
Thus $\alpha$ is the pullback of a holomorphic
one-form on the quotient germ. Uniqueness
follows from uniqueness of descent on the
unramified locus, which is dense.

This conclusion is also consistent with
Solomon's description of invariant
differential forms for finite complex
reflection groups \cite{Solomon1963}.

Finally, suppose that $\alpha_\nu$ is a
sequence of $K$-invariant holomorphic
one-forms converging uniformly on compact
subsets upstairs. In the cyclic quotient
coordinates above, the coefficients of
the descended forms depend continuously
on the invariant coefficients of
$\alpha_\nu$, as follows from their
power-series expansions and Cauchy
estimates. Hence the descended forms
converge uniformly on compact subsets
away from the exceptional subset of
codimension at least two.

Near that exceptional subset, choose
small coordinate polydiscs in the smooth
quotient whose distinguished boundaries
avoid the exceptional set. The maximum
modulus principle, applied to the
holomorphically extended coefficient
functions, gives uniform convergence
on compact subsets of these polydiscs.
A finite covering argument then proves
compact-open convergence throughout
the quotient germ.
\end{proof}
\begin{theorem}[Tischler approximation on finite Stein quotients]
\label{thm:finite-quotient-tischler}
Let $M$ be a connected Stein manifold with finitely generated $H_1(M,\Z)$, and let $G$ be a finite group acting holomorphically on $M$. Suppose that a holomorphic $\C^*$-action on $M$ commutes with $G$ and has nowhere-vanishing infinitesimal generator $X$. Given a $G$-invariant closed holomorphic one-form $\omega$ such that $\omega(X)=1$, there are $G$-invariant closed holomorphic one-forms $\omega_j$ on $M$ satisfying
\[
 \omega_j(X)=1,\qquad \omega_j\longrightarrow\omega
 \quad\text{uniformly on compact subsets of }M,
 \qquad \Per(\omega_j)\subset\C\ \text{discrete}.
\]
Writing $S=M/G$ and $\pi:M\to S$ for the quotient map, the forms $\omega,\omega_j$ descend to reflexive one-forms $\bar\omega,\bar\omega_j$ on the normal Stein space $S$. Their restrictions to $S_{\rm reg}$ are closed and normalized by the descended vector field $\bar X$; furthermore $\bar\omega_j\to\bar\omega$ uniformly on compact subsets of $S_{\rm reg}$ and
\[
 \Per(\bar\omega_j|_{S_{\rm reg}})
 \subseteq\tfrac1{|G|}\Per(\omega_j),
\]
so these descended period groups are discrete.
\end{theorem}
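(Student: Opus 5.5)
The argument has two stages. First we build $G$-invariant approximants on $M$. Then we descend them to $S$ and control their periods there.

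\textbf{Approximation upstairs.} Since $M$ is smooth, the smooth case of Theorem~\ref{thm:normal-reflexive-approx} (with $S=M$) gives closed forms $\alpha_j\to\omega$ on compact subsets of $M$ with $\alpha_j(X)=1$ and $\Per(\alpha_j)$ discrete. To obtain $G$-invariance, set
\[
\omega_j=\frac1{|G|}\sum_{g\in G}g^*\alpha_j .
\]
\begin{itemize}
\item Each $\omega_j$ is closed and holomorphic.
\item Because $G$ commutes with the $\C^*$-action, $g_*X=X$, so $(g^*\alpha_j)(X)=\alpha_j(g_*X)\circ g=1$; hence $\omega_j(X)=1$.
\item Since $\omega$ is $G$-invariant, $g^*\alpha_j\to g^*\omega=\omega$, so $\omega_j\to\omega$ uniformly on compacts.
\end{itemize}

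\textbf{Discreteness of $\Per(\omega_j)$ (main obstacle).} Averaging does not automatically preserve discreteness, because $\Per(g^*\alpha_j)=\Per(\alpha_j)\circ g_*$ is only the image of $H_1$ under a different functional. The fix is to do the rational approximation inside the $G$-invariant part.
\begin{itemize}
\item $G$ acts on $H^1(M,\Q)$, and the invariant subspace $H^1(M,\C)^G$ is defined over $\Q$.
\item The constraint $\langle\xi,\gamma\rangle=2\pi i$ is rational, and $\gamma$ is $G$-invariant because $G$ maps orbits to orbits.
\item So Gaussian-rational multiples of $2\pi i$ are dense in the invariant affine hyperplane containing $[\omega]$.
\end{itemize}
Choose $\xi_j\to[\omega]$ in that invariant Gaussian-rational locus and proceed as in Theorem~\ref{thm:normal-reflexive-approx}: take representatives $\alpha_j$ with $[\alpha_j]=\xi_j$, average them over $G$ and over the circle action, and note that both averagings fix the class $\xi_j$. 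The period group is then contained in $\frac{2\pi i}{N}(\Z+i\Z)$ for some $N$, hence discrete.

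\textbf{Descent.} On the free locus $M^\circ$, where $\pi$ is an unramified covering onto an open subset of $S_{\rm reg}$, invariant forms descend directly.

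At points whose image lies in $S_{\rm reg}$ but which have nontrivial stabilizer $K$, linearize the $K$-action by Cartan's lemma. The quotient germ is smooth there, so Lemma~\ref{lem:reflection-one-form-descent} gives holomorphic descended forms $\bar\omega,\bar\omega_j$ on $S_{\rm reg}$, and also compact-open convergence $\bar\omega_j\to\bar\omega$ on $S_{\rm reg}$.

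These descended forms are reflexive sections. Closedness and $\bar\omega(\bar X)=1$ hold on $\pi^{-1}(S_{\rm reg})\cap M^\circ$, which is dense, and hence hold everywhere on $S_{\rm reg}$.

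\textbf{Periods downstairs.} Take a loop $\sigma$ in $S_{\rm reg}$, and homotope it within $S_{\rm reg}$ into the unramified locus; this is possible because the branch locus is a proper analytic subset of $S_{\rm reg}$ and we can use general position. Lift $\sigma$ to a path $\tilde\sigma$ in $M$ from $x$ to $hx$ for some $h\in G$, and let $m=\operatorname{ord}(h)$. Concatenating the lifts $\tilde\sigma, h\tilde\sigma,\dots,h^{m-1}\tilde\sigma$ gives a closed loop in $M$. By invariance each piece has the same integral, so
\[
m\int_\sigma\bar\omega_j\in\Per(\omega_j).
\]
Since $m$ divides $|G|$, we have $\int_\sigma\bar\omega_j\in\frac{1}{m}\Per(\omega_j)\subseteq\frac1{|G|}\Per(\omega_j)$, which is discrete.
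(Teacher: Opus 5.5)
Your proposal is correct and follows essentially the same route as the paper: Gaussian-rational approximation inside the normalized hyperplane of $H^1(M,\C)^G$, then averaging over $G$ and the circle (which fixes the invariant class), then descent via local linearization and Lemma~\ref{lem:reflection-one-form-descent}, and finally the $m$-fold concatenated lift giving the $\frac1{|G|}$ period bound. The one detail the paper makes explicit that you leave implicit is first replacing $G$ by its effective quotient $G/\ker(G\to\operatorname{Aut}(M))$. This ensures the free locus you descend over is nonempty and that the linearized stabilizers are genuinely subgroups of $\mathrm{GL}(T_xM)$, as the lemma requires.
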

\begin{proof}
Let $\gamma$ denote the common orbit-circle class in $H_1(M,\Z)$. Since $\omega(X)=1$, its period is $2\pi i$, so its image in $H_1(M,\Q)$ is nonzero. Because $G$ commutes with the $\C^*$-action, $\gamma$ is $G$-fixed. Consider the rational vector space
\[
 V_\Q=H^1(M,\Q)^G,
 \qquad V_\C=V_\Q\otimes_\Q\C=H^1(M,\C)^G.
\]
The equality follows from finite-group averaging. The affine equation $\langle\xi,\gamma\rangle=2\pi i$ has a rational solution after division by $2\pi i$: the functional $\langle\,,\gamma\rangle$ is rational and nonzero on $V_\Q$. Thus the normalized affine hyperplane within $V_\C$ has dense points with coordinates in $2\pi i(\Q+i\Q)$ relative to a rational basis, while preserving the orbit-period equation exactly. Choose such invariant classes $\xi_j\to[\omega]$. Because $H_1(M,\Z)$ is finitely generated, the image of each $\xi_j$ is contained in a Gaussian rational lattice $N_j^{-1}2\pi i\Z[i]$, for some positive integer $N_j$, and hence is discrete.

By holomorphic de Rham theory on the Stein manifold $M$, choose closed global holomorphic representatives of a finite basis of $H^1(M,\C)$. Perturbing $\omega$ by small linear combinations gives closed forms $\alpha_j\to\omega$ in the compact-open topology with $[\alpha_j]=\xi_j$. Average $\alpha_j$ over $G$ and then over the circle subgroup of $\C^*$. The averaged forms are closed and $G$-invariant, their cohomology classes remain $\xi_j$, and they converge to $\omega$ (which is itself invariant under the circle by Cartan's formula). By the orbit-period argument in Theorem~\ref{stein:thm:hyperplane}, every averaged form has contraction $1$ with $X$. Denote these averaged forms by $\omega_j$.

For descent, replace $G$ by its effective quotient
$G/\ker(G\to\operatorname{Aut}(M))$, if necessary. This does not
change $S=M/G$ or the invariant forms, and its order divides the
order of the original group. With the resulting effective action,
let $M^\circ\subset M$ be the free-action locus and put $S^\circ=M^\circ/G$. The map $\pi:M^\circ\to S^\circ$ is a finite unramified covering, so the invariant forms descend uniquely to $S^\circ$. For descent across smooth branch points, fix $q\in S_{\rm reg}$
and a lift $p$. Finite-group local linearization
\cite{Bochner1945} models the quotient germ at $q$ by the linear
action of the stabilizer $G_p$. Since that germ is smooth, the
Chevalley--Shephard--Todd criterion
\cite{ChevalleyShephardTodd} applies. Lemma~\ref{lem:reflection-one-form-descent}
therefore extends the invariant form uniquely to a holomorphic
one-form near $q$. On the complement of $S_{\rm reg}$, which has
codimension at least two by normality, these forms determine a
unique reflexive differential in $j_*\Omega^1_{S_{\rm reg}}$. Closedness and normalization are equalities on the regular locus because they hold over the dense unramified set. Compact-open convergence of the descended forms on $S_{\rm reg}$
follows by applying the convergence assertion of
Lemma~\ref{lem:reflection-one-form-descent} in finite local quotient
charts.

To compare periods, represent a loop in $S_{\rm reg}$ by a loop avoiding the branch locus, which has real codimension two and can be avoided by a small perturbation; closedness preserves its integral. A lift to $M^\circ$ ends at a point in the same $G$-orbit. If the monodromy element is $g\in G$, its order $m$ divides $|G|$ (Lagrange's theorem). The $m$-fold concatenation lifts to a closed loop, giving $m\int_\gamma\bar\omega_j\in\Per(\omega_j)$. Therefore also $|G|\int_\gamma\bar\omega_j\in\Per(\omega_j)$, which proves the displayed containment and discreteness. This argument makes no assertion that integration defines a locally trivial fibration across $S_{\rm sing}$.
\end{proof}

\begin{proof}[Proof of Theorem~\ref{thm:intro:finite}]
The hypotheses of the introductory theorem are those of
Theorem~\ref{thm:finite-quotient-tischler}. That theorem constructs
$G$-invariant closed normalized approximants upstairs and proves
both their descent to closed normalized reflexive forms on the
quotient and discreteness of their periods on the regular locus.
These are precisely the assertions of Theorem~\ref{thm:intro:finite}.
\end{proof}

The following example illustrates the approximation theorem
on a singular Stein space, with approximating forms whose
period groups are lattices in $\mathbb C$.

\begin{example}[Singular product approximation with elliptic periods]
\label{ex:singular-product-approximation}
Let $Y$ be any connected normal Stein space (in particular, a singular one), and put
\[
 S=Y\times(\C^*)^2,\qquad
 X=z\partial_z,\qquad
 \omega_\lambda=\frac{dz}{z}+\lambda\frac{dw}{w}.
\]
The forms are closed reflexive differentials on $S$, normalized by $X$, and their period groups are
\[
 \Per(\omega_\lambda)=2\pi i(\Z+\lambda\Z).
\]
Fix an irrational real $\lambda$ and choose $\lambda_j\in\Q+i\Q$ with $\operatorname{Im}\lambda_j\ne0$ and $\lambda_j\to\lambda$. Then $\Per(\omega_\lambda)$ is nondiscrete, whereas each $\Per(\omega_{\lambda_j})$ is a rank-two lattice, and $\omega_{\lambda_j}\to\omega_\lambda$ uniformly on compact sets. For $Y=\{xy=z_0^2\}\subset\C^3$ this gives a genuinely singular normal Stein space exhibiting discrete-period approximation (the coordinate $z_0$ is distinct from the $\C^*$-coordinate $z$). This example is elementary and does not assert a local fiber-bundle structure at singular points.
\end{example}

\begin{remark}[Finite quotients of  complex manifolds]
\label{rem:finite-descent-scope}
Finite quotients of smooth complex manifolds satisfy
the degree-one reflexive Poincar\'e property. In a local linearization
chart, pull a closed reflexive one-form back over the preimage of the
regular locus of the quotient. The complement of this preimage has
codimension at least two, so the coefficients of the pulled-back
holomorphic form extend across it by Hartogs' theorem. The extension
is closed by the identity principle. On a sufficiently small invariant
ball it has a holomorphic primitive. Averaging this primitive over the
finite stabilizer preserves its differential and gives an invariant
primitive, which descends holomorphically to the quotient. This proves
local exactness also at singular points.
\end{remark}

\section{Normal Stein spaces: local period obstructions and flat connections}\label{sec:normal-spaces}
We now examine the local restrictions imposed by normalization, especially near singular points of normal Stein surfaces.

The preceding sections establish approximation results for normal Stein spaces,
finite Stein quotients, and smooth Stein manifolds. We now
examine local existence and obstruction phenomena on normal Stein spaces. Throughout
this section, differentials on a normal space $S$ are understood in the
\emph{reflexive} sense
$\Omega_S^{[1]}=j_*\Omega_{S_{\mathrm{reg}}}^1$, where
$j:S_{\mathrm{reg}}\hookrightarrow S$. Closedness and contraction by
vector fields are interpreted on the regular locus. We make no general
claim that reflexive closed forms compute $H^1(S,\C)$.

\begin{proposition}[Fixed singular points on normal surfaces]\label{prop:normal-fixed-sing}
Let $S$ be a normal complex analytic surface and let a connected
complex Lie group act holomorphically on $S$. Every point of
$\Sing(S)$ is fixed. In particular, a normal surface carrying a
fixed-point-free holomorphic $\C^*$-action is smooth.
This fixed-point observation is classical; compare
\cite[Section~1.3]{OrlikWagreich1971}.
\end{proposition}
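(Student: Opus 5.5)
The plan has three steps: show the singular locus is a discrete set, use connectedness of the group to conclude each singular point is fixed, and then read off the statement about $\C^*$-actions.

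Let $G$ be the connected complex Lie group acting holomorphically on the normal surface $S$.

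\emph{Step 1: $\Sing(S)$ is discrete.} Every holomorphic automorphism of $S$ preserves the singular locus. This is because smoothness of the local ring $\OO_{S,p}$ is an intrinsic property, so $g(\Sing(S))=\Sing(S)$ for every $g\in G$. Since $S$ is normal, $\Sing(S)$ has codimension at least two. As $\dim S=2$, this means $\Sing(S)$ is a closed analytic subset of dimension zero, hence a discrete subset of $S$.

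\emph{Step 2: orbits through singular points are trivial.} Fix $p\in\Sing(S)$ and consider the orbit map $G\to S$, $g\mapsto g\cdot p$. It is continuous, and by Step 1 it takes values in $\Sing(S)$, which is discrete. A connected Lie group is connected as a topological space, so its continuous image in a discrete set is a single point. Hence $g\cdot p=e\cdot p=p$ for all $g\in G$, so $p$ is fixed.

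\emph{Step 3: the $\C^*$ consequence.} Apply Step 2 with $G=\C^*$, which is connected. Every singular point is then a fixed point of the action. If the action is fixed-point-free, $\Sing(S)$ must be empty, so $S$ is smooth.

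The only step needing a cited fact is the codimension-two statement for the singular locus of a normal space. This is standard, from Serre's criterion or the classical result that normal surfaces have isolated singularities, so I do not expect any real obstacle. Beyond that, one should only check that "singular point" is invariant under biholomorphisms of analytic spaces, which is immediate from the intrinsic definition.
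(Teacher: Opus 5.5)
Your proposal is correct and follows essentially the same route as the paper. Normality makes $\Sing(S)$ discrete and automorphisms preserve it. The orbit of a singular point is then the continuous image of a connected group in a discrete set, hence a single point. The $\C^*$ case follows immediately from this.
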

\begin{proof}
Normality implies that $\Sing(S)$ has complex codimension at least
two, hence is discrete. Every automorphism preserves $\Sing(S)$.
The orbit of a singular point is the connected image of a connected
Lie group in a discrete space and is therefore a singleton.
\end{proof}

Two elementary consequences of normalization will be useful when analyzing singular orbits.

\begin{lemma}
\label{lem:normalization-obstructions}
Let $S$ be a connected normal complex analytic space
satisfying the degree-one reflexive Poincar\'e property.
Let $X$ generate a holomorphic $\mathbb C^*$-action
on $S$, and suppose that there exists a closed
reflexive holomorphic one-form $\omega$ satisfying
\[
\omega(X)=1
\quad\text{on }S_{\mathrm{reg}}.
\]
Then the action has no fixed points, and every
orbit circle represents a nontorsion class in
$H_1(S,\mathbb Z)$.
\end{lemma}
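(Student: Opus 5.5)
The plan is to use the Poincaré property to pass to local holomorphic primitives, and then read off both conclusions from the identity $X(f)=1$.

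\emph{Step 1 (no fixed points).} Let $p\in S$ be arbitrary. By the degree-one reflexive Poincar\'e property (Definition~\ref{def:reflexive-poincare}), there is a neighborhood $U$ of $p$ and a holomorphic function $f\in\mathcal O(U)$ with $df=\omega$ on $U\cap S_{\rm reg}$. On $U\cap S_{\rm reg}$ we then have $X(f)=\omega(X)=1$. Both $X(f)$ and the constant $1$ are holomorphic functions on $U$, and $S_{\rm reg}$ is dense (normality). Hence $X(f)\equiv1$ on all of $U$, in the sense of the derivation $X$ acting on $\mathcal O_S$.

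Suppose $X(p)=0$. Then $p$ is fixed by the local flow, so $f(\varphi_t(p))=f(p)$ for small $t$. On the other hand, differentiating in $t$ gives
\[
\tfrac{d}{dt}f(\varphi_t(p))=(Xf)(\varphi_t(p))=1.
\]
This is a contradiction. Equivalently, a holomorphic vector field acts on the maximal ideal $\mathfrak m_p$ into $\mathfrak m_p$ when it vanishes at $p$, whereas $X(f-f(p))=1\notin\mathfrak m_p$. So the action has no fixed points.

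\emph{Step 2 (orbit circles are nontorsion).} Take any point $q$. Since $q$ is not fixed, its orbit circle is $c(s)=\varphi_{is}(q)$ for $s\in[0,2\pi]$, using $\varphi_{2\pi i}=\mathrm{id}$. If $q\in S_{\rm reg}$, then the orbit lies in $S_{\rm reg}$, since the action preserves the singular locus. Along this orbit we compute
\[
\int_c\omega=\int_0^{2\pi}\omega(iX)\,ds=2\pi i.
\]
If $q$ is singular, I instead compute the integral by analytic continuation of local primitives along $c$. The identity $\frac{d}{ds}f(c(s))=i$ still holds on each local chart by Step 1, so the total monodromy is again $2\pi i$. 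By Proposition~\ref{prop:reflexive-de-rham}, or directly because the class of $\omega$ in $H^1(S,\C)$ is given by the monodromy of local primitives, this monodromy depends only on $[c]\in H_1(S,\Z)$. If $n[c]=0$ for some $n\ge1$, then $2\pi i n=0$, which is impossible. Hence $[c]$ is nontorsion.

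The only delicate point is Step 1 at singular points: one has to justify that $X(f)=1$ extends, and that a vector field vanishing at $p$ cannot send $f$ to a unit. The first follows from density of $S_{\rm reg}$ and the identity principle on the normal space. The second follows because the flow fixes $p$. Stein-ness is not needed for either step; one only needs to define the period via local primitives, which does not require Proposition~\ref{prop:reflexive-de-rham}.
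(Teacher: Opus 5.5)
Your proposal is correct and follows the paper's proof: the reflexive Poincar\'e property gives a local primitive $f$ with $X(f)\equiv1$ by normality, which contradicts $X(f)(p)=0$ at a fixed point, and the orbit integral $2\pi i$ together with the factorization of local-primitive periods through $H_1(S,\Z)$ excludes torsion. Your extra treatment of orbit circles through singular points, and your remark that Steinness (and hence Proposition~\ref{prop:reflexive-de-rham}) is not needed, are harmless refinements, since the paper only treats base points in $S_{\rm reg}$.
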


\begin{proof}
Near a hypothetical fixed point $p$, the degree-one reflexive
Poincar\'e property gives $\omega=df$ for a holomorphic function
$f$. Thus $X(f)=1$ on the regular locus and hence on the whole
neighborhood by normality. At a fixed point, however,
$X(f)(p)=0$, a contradiction.

Let $p\in S_{\mathrm{reg}}$, and consider
the orbit circle
\[
\gamma_p(\theta)=e^{i\theta}\cdot p,
\qquad 0\leq\theta\leq2\pi.
\]
Since $\dot\gamma_p=iX(\gamma_p)$, normalization
gives
\[
\int_{\gamma_p}\omega=2\pi i.
\]
By the degree-one reflexive Poincar\'e property,
the periods of $\omega$ are defined on
$H_1(S,\mathbb Z)$. If $[\gamma_p]$ were torsion,
its period would vanish, a contradiction.
\end{proof}

We first examine how invariant local forms can be obtained by averaging.

\begin{lemma}
\label{lem:local-reflexive-average}
Let $S$ be a normal complex surface with a holomorphic $\C^*$-action and infinitesimal generator $X$. On any $S^1$-invariant open set $V\subset S$, if a reflexive holomorphic one-form $\eta$ satisfies $\eta(X)=1$ on $V\cap S_{\rm reg}$, there exists a reflexive holomorphic one-form $\omega$ on $V$ with
\[
 \omega(X)=1,\qquad d\omega=0\quad\text{on }V\cap S_{\rm reg}.
\]
\end{lemma}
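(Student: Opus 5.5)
The plan is to average $\eta$ over the circle subgroup and check that the average is already closed, using the fact that $\dim S=2$. Write $\varphi_t$ for the complex-time flow of $X$, so that $\varphi_{i\theta}(p)=e^{i\theta}\cdot p$ for $\theta\in[0,2\pi]$. Every automorphism preserves $\Sing(S)$, and $V$ is $S^1$-invariant, so each $\varphi_{i\theta}$ restricts to a biholomorphism of $V\cap S_{\rm reg}$. Set
\[
\omega=\frac1{2\pi}\int_0^{2\pi}\varphi_{i\theta}^*\eta\,d\theta .
\]

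The first step is to see that $\omega$ is a reflexive holomorphic one-form on $V$ with $\omega(X)=1$. The integrand depends holomorphically on the point and continuously on $\theta$, and $[0,2\pi]$ is compact, so $\omega$ is holomorphic on $V\cap S_{\rm reg}$. Hence $\omega$ is a section of $j_*\Omega^1_{S_{\rm reg}}$ over $V$. Since $X$ is invariant under its own flow, $(\varphi_{i\theta})_*X=X$. Therefore $(\varphi_{i\theta}^*\eta)(X)=\eta(X)\circ\varphi_{i\theta}=1$, and averaging gives $\omega(X)=1$ on $V\cap S_{\rm reg}$. In particular $X$ has no zeros on $V\cap S_{\rm reg}$.

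The second step is invariance. Because $\varphi_{i\theta}$ is $2\pi$-periodic in $\theta$, the form $\omega$ is invariant under the whole circle. Differentiating at $\theta=0$ gives $\mathcal L_{Y}\omega=0$, where $Y$ is the real generator of the circle action. For a holomorphic vector field acting on holomorphic forms, the real flow of $Y$ is the complex flow of $X$ in imaginary time. So $\frac{d}{d\theta}\varphi_{i\theta}^*\omega=i\,\varphi_{i\theta}^*\mathcal L_X\omega$, the holomorphic Lie derivative. This yields $\mathcal L_X\omega=0$ on $V\cap S_{\rm reg}$. I expect this identification of the real and holomorphic Lie derivatives to be the only point needing care. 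It follows from the fact that $t\mapsto\varphi_t^*\omega$ is holomorphic in $t\in\C$, so its derivative in the direction $i$ equals $i$ times its complex derivative.

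The third step uses Cartan's formula:
\[
0=\mathcal L_X\omega=d(\omega(X))+\iota_X d\omega=\iota_X d\omega .
\]
Here $d\omega$ is a holomorphic $(2,0)$-form on the complex surface $V\cap S_{\rm reg}$, so it is of top degree. At each point write $d\omega=h\,dz_1\wedge dz_2$ in local coordinates. Then $\iota_Xd\omega=0$ with $X\neq0$ forces $h=0$, because contraction of a nonzero top-degree form against a nonzero vector is nonzero. Hence $d\omega=0$ on $V\cap S_{\rm reg}$, and $\omega$ has the required properties.
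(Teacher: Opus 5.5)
Your proposal is correct and takes the same route as the paper: average $\eta$ over the circle, use circle invariance and holomorphy to get $\mathcal L_X\omega=0$, apply Cartan's formula to obtain $\iota_X d\omega=0$, and conclude $d\omega=0$ because contraction by the nonvanishing $X$ is injective on top-degree holomorphic forms of the surface $V\cap S_{\rm reg}$. You also supply details the paper leaves implicit, namely that each $\varphi_{i\theta}$ preserves $V\cap S_{\rm reg}$ and that the real circle derivative equals $i\,\mathcal L_X$ by holomorphy of $t\mapsto\varphi_t^*\omega$ near $t=0$.
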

\begin{proof}
Average the pullbacks of $\eta$ over $S^1$. The average defines a holomorphic differential on the regular locus, hence a reflexive differential on the normal surface. It remains normalized, since the action preserves $X$. Circle invariance implies $\mathcal L_X\omega=0$ by differentiation and holomorphicity. Cartan's identity yields $\iota_Xd\omega=0$. Since $\omega(X)=1$, the field is nowhere zero on the regular locus; contraction by a nonzero vector is injective on holomorphic two-forms on a smooth complex surface. Therefore $d\omega=0$.
\end{proof}

The next lemma identifies the role of local Stein geometry in normalization.

\begin{lemma}[Local Stein normalization and Poincar\'e obstruction]
\label{lem:local-stein-normalization}
Let $p$ belong to a normal complex surface $S$ with a holomorphic
$\C^*$-action generated by $X$. If the stalk morphism
$\iota_X:\Omega_{S,p}^{[1]}\to\mathcal O_{S,p}$ is surjective,
then there is a Stein neighborhood of $p$ on which a normalized
reflexive form exists. If $p$ is fixed, one may choose a sufficiently
small $S^1$-invariant Stein neighborhood and obtain a \emph{closed}
normalized reflexive form there. Conversely, existence of a normalized
form in a neighborhood implies surjectivity at $p$. If $p$ is fixed
and the degree-one reflexive Poincar\'e property holds at $p$, the
contraction morphism is not surjective at $p$.
\end{lemma}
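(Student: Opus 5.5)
The plan is to treat the four assertions separately. Each reduces to a short argument about the contraction map $\iota_X:\Omega_S^{[1]}\to\mathcal O_S$, combined with Cartan's Theorem~B on Stein neighborhoods and with Lemma~\ref{lem:local-reflexive-average} and Lemma~\ref{lem:normalization-obstructions}.

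\emph{Existence from surjectivity.} Suppose the stalk map $\iota_X:\Omega^{[1]}_{S,p}\to\mathcal O_{S,p}$ is surjective. Then there is a germ $\eta_p$ with $\eta_p(X)=1$ in $\mathcal O_{S,p}$. Represent it by a reflexive form $\eta$ on some open neighborhood $W$ of $p$; after shrinking $W$, the function $\eta(X)-1$ vanishes identically there. Since $S$ is a complex space, $p$ has a fundamental system of Stein neighborhoods, so we may choose one, $V\subset W$. Then $\eta|_V$ is a normalized reflexive form on a Stein neighborhood of $p$. Alternatively, one can argue with Theorem~B: $\Omega^{[1]}_S$ is coherent by normality, the image sheaf of $\iota_X$ is a coherent ideal, and surjectivity at the stalk propagates to a neighborhood by coherence. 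This route is not needed, because the germ itself already gives the form.

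\emph{Closed form at a fixed point.} Now let $p$ be fixed. Then the whole compact group $S^1\subset\C^*$ fixes $p$. Local linearization (Bochner--Cartan for compact groups, applied in an equivariant local embedding) yields arbitrarily small $S^1$-invariant Stein neighborhoods; concretely, one takes sublevel sets of an $S^1$-invariant strictly plurisubharmonic exhaustion, obtained by averaging $\|z\|^2$ in a local embedding into $\C^N$ on which $S^1$ acts linearly. Inside such a $V\subset W$, Lemma~\ref{lem:local-reflexive-average} applied to $\eta|_V$ produces a closed normalized reflexive form. The one subtle point is producing the invariant Stein neighborhood. I would handle it by citing the existence of an equivariant embedding of the germ for a compact group, which holds because the action on the maximal ideal is finite-dimensionally linearizable on $\mathfrak m_p/\mathfrak m_p^{k}$.

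\emph{Converse.} If a normalized $\omega$ exists on a neighborhood of $p$, then for every germ $f_p\in\mathcal O_{S,p}$ we have $\iota_X(f_p\omega_p)=f_p$. Hence the stalk map is surjective.

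\emph{Obstruction at a fixed point.} Suppose $p$ is fixed, the stalk map is surjective, and the degree-one reflexive Poincar\'e property holds at $p$. By the second step there is a closed normalized reflexive form $\omega$ on a small invariant Stein neighborhood $V$. The Poincar\'e property gives $\omega=df$ near $p$, so $X(f)=1$ on the regular locus, and by normality on all of $V$. But $X(p)=0$, so $X(f)(p)=0$, which is a contradiction; this is the argument of Lemma~\ref{lem:normalization-obstructions}. Therefore the stalk map is not surjective.

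I expect the main obstacle to be the invariant Stein neighborhood in the second step, since every other step is formal.
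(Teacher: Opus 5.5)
Your proposal is correct and follows the paper's proof essentially step by step: the converse by multiplying the normalized section by arbitrary germs, the closed form at a fixed point via a small $S^1$-invariant Stein neighborhood (obtained from an averaged strictly plurisubharmonic function) combined with Lemma~\ref{lem:local-reflexive-average}, and the obstruction via a local primitive $f$ with $X(f)=1$ contradicting $X(f)(p)=0$. The only difference is in the first step, and it is minor: you take the normalized form directly from a representative of the preimage germ of $1$, whereas the paper propagates surjectivity to a neighborhood by coherence and lifts $1$ with Cartan's Theorem~B, a detour your observation shows is unnecessary.
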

\begin{proof}
Surjectivity holds on some neighborhood of $p$, as the cokernel of
the coherent contraction morphism has closed analytic support.
Choose a smaller Stein neighborhood $U$ of $p$. On $U$, the kernel
of contraction is coherent, so Cartan's Theorem B lifts the unit
section to a normalized reflexive form. If $p$ is fixed, average over
$S^1$ after choosing $U$ invariant under this compact group; an
$S^1$-invariant Stein neighborhood basis can be obtained by
averaging a local strictly plurisubharmonic exhaustion and taking
small invariant sublevel sets. Lemma~\ref{lem:local-reflexive-average}
then makes the normalized form closed.
The converse follows by contracting the normalized section.
Finally, a closed normalized form near a fixed point is locally
$df$ under the degree-one reflexive Poincar\'e property, so
$X(f)=1$ on the regular locus and hence everywhere by normality.
At a fixed point $X(f)(p)=0$, a contradiction.
\end{proof}

These local considerations lead to a criterion for global reflexive normalization.

\begin{theorem}[Reflexive normalization criterion]
\label{prop:reflexive-contraction}
Let $S$ be a normal Stein surface with a holomorphic $\C^*$-action,
and let $X$ be its infinitesimal generator. The following conditions are
equivalent:
\begin{enumerate}
\item The coherent contraction morphism
$\iota_X:\Omega_S^{[1]}\longrightarrow\mathcal O_S$ is surjective.
\item There exists $\eta\in H^0(S,\Omega_S^{[1]})$ with
$\eta(X)=1$ on $S_{\rm reg}$.
\item There exists $\omega\in H^0(S,\Omega_S^{[1]})$ whose
restriction to $S_{\rm reg}$ is closed and satisfies $\omega(X)=1$.
\end{enumerate}
\end{theorem}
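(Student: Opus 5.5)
We prove the cycle of implications (3)$\Rightarrow$(2)$\Rightarrow$(1)$\Rightarrow$(3). The first two are formal. For (3)$\Rightarrow$(2), take $\eta=\omega$. For (2)$\Rightarrow$(1), the global section $\eta$ satisfies $\iota_X\eta=1$ on $S_{\rm reg}$. Since $\iota_X\eta$ is a holomorphic function on the normal space $S$, it equals $1$ on $S$. Thus the unit section lies in the image of the sheaf morphism $\iota_X:\Omega_S^{[1]}\to\mathcal O_S$. The image is an ideal sheaf of $\mathcal O_S$ containing $1$ at every stalk, so it is all of $\mathcal O_S$. This is the converse part of Lemma~\ref{lem:local-stein-normalization}, applied at every point.

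For (1)$\Rightarrow$(3), we first globalize via Cartan's Theorem B. The kernel $\mathcal K=\ker\iota_X$ is coherent, being the kernel of a morphism of coherent sheaves ($\Omega_S^{[1]}$ is coherent as the reflexive hull $(\Omega^1_S)^{**}$). Surjectivity gives the short exact sequence
\[
0\to\mathcal K\to\Omega_S^{[1]}\xrightarrow{\iota_X}\mathcal O_S\to0 .
\]
Since $H^1(S,\mathcal K)=0$ on the Stein surface $S$, the constant function $1$ lifts to a global $\eta\in H^0(S,\Omega_S^{[1]})$ with $\eta(X)=1$ on $S_{\rm reg}$. This also establishes (1)$\Rightarrow$(2) directly.

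It remains to make $\eta$ closed, and we use Lemma~\ref{lem:local-reflexive-average} with $V=S$. The lemma requires an $S^1$-invariant open set, and $S$ itself is trivially invariant. Concretely, we set
\[
\omega=\frac1{2\pi}\int_0^{2\pi}\varphi_{i\theta}^*\eta\,d\theta
\]
on $S_{\rm reg}$. The regular locus is invariant under automorphisms, so this makes sense there, and the result extends as a reflexive form because reflexive sections are determined on $S_{\rm reg}$. Normalization survives the averaging because the flow preserves $X$. Circle invariance gives $\mathcal L_X\omega=0$; the derivative in real directions vanishes, and since $\omega$ is holomorphic and the flow is complex, the full complex derivative vanishes. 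Cartan's formula then gives
\[
\iota_X d\omega=\mathcal L_X\omega-d(\iota_X\omega)=0-d(1)=0 .
\]
Because $\omega(X)=1$, the field $X$ has no zeros on $S_{\rm reg}$. On a smooth surface, a holomorphic $2$-form annihilated by contraction with a nowhere-zero vector field vanishes. Hence $d\omega=0$ on $S_{\rm reg}$.

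The main obstacle is the justification of the lift step. We must confirm that $\Omega_S^{[1]}$ is coherent, so that the kernel is coherent and Theorem B applies. We must also confirm that the averaged form is holomorphic on $S_{\rm reg}$ and extends reflexively. Coherence follows from normality, since the direct image of a locally free sheaf across a codimension-two analytic set is coherent (Siu/Trautmann; for surfaces the singular set is discrete). Holomorphicity of the average follows from differentiation under the integral sign, using that the action map $\C^*\times S\to S$ is holomorphic. No Poincaré hypothesis is needed anywhere. Proposition~\ref{prop:normal-fixed-sing} and Lemma~\ref{lem:local-stein-normalization} show that if the Poincaré property held at fixed (in particular singular) points, condition (1) would fail there. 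So the equivalence correctly encodes the local obstructions at singular points.
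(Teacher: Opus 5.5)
Your proof is correct and follows the paper's argument: Cartan's Theorem B on the coherent kernel of $\iota_X$ to lift $1$, the normality/ideal-sheaf argument for the converse, and circle averaging (Lemma~\ref{lem:local-reflexive-average} with $V=S$) to make the normalized form closed. The only difference is cosmetic: you arrange the implications as one cycle $(3)\Rightarrow(2)\Rightarrow(1)\Rightarrow(3)$, whereas the paper proves $(1)\Leftrightarrow(2)$ and $(2)\Leftrightarrow(3)$ separately.
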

\begin{proof}
If the contraction morphism is surjective, its kernel is coherent;
Cartan's Theorem B lifts the global unit section
$1\in\mathcal O(S)$ to a global reflexive differential $\eta$.
This proves (1)$\Rightarrow$(2).

Conversely, for a form as in (2), the contraction $\eta(X)$ is a
holomorphic function on $S$ because $S$ is normal and the regular
locus has codimension-two complement. Since it equals $1$ on the
regular locus, it equals $1$ on $S$; multiplication of $\eta$ by
local holomorphic functions therefore provides a sheaf-theoretic
right inverse of contraction. Thus (2)$\Rightarrow$(1).

The local averaging lemma, Lemma~\ref{lem:local-reflexive-average}, applied to $V=S$ gives (3) from (2). Finally, (3)$\Rightarrow$(2) is immediate.
\end{proof}

The period of a closed time form imposes a basic homological constraint on its orbits.

\begin{proposition}\label{prop:orbit-homology}
Let $U^*$ be a smooth complex space carrying a holomorphic vector
field $X$. Suppose $q\in U^*$ has a nonconstant periodic complex-time
orbit $\gamma(t)=\varphi_t(q)$, $t$ running from $0$ to
$\tau\in\C\setminus\{0\}$, and that the class of this loop
vanishes in $H_1(U^*,\C)$. Then no closed holomorphic one-form
$\omega$ on $U^*$ can satisfy $\omega(X)=1$.
\end{proposition}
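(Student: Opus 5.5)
We argue by contradiction: compute the integral of a hypothetical normalized closed form along the periodic orbit in two ways, directly from normalization and via de Rham theory.

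Suppose $\omega$ is a closed holomorphic one-form on $U^*$ with $\omega(X)=1$. Parametrize the loop by the real segment $s\mapsto\gamma(s\tau)=\varphi_{s\tau}(q)$, $s\in[0,1]$. This is a closed curve because $\varphi_\tau(q)=q$. Its velocity is $\frac{d}{ds}\varphi_{s\tau}(q)=\tau\,X(\varphi_{s\tau}(q))$, since the complex-time flow is holomorphic in $t$ and satisfies $\partial_t\varphi_t=X\circ\varphi_t$; we restrict to the real direction $t=s\tau$. Normalization then gives
\[
\int_\gamma\omega=\int_0^1\omega\bigl(\tau X(\varphi_{s\tau}(q))\bigr)\,ds=\tau\neq0 .
\]

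On the other hand, $\omega$ is a closed smooth complex-valued one-form on the manifold $U^*$. Its integral over a closed loop depends only on the loop's class in $H_1(U^*,\Z)$, and through the de Rham pairing it factors through $H_1(U^*,\C)$. This pairing is $\C$-linear in the homology class. Since the class of $\gamma$ vanishes in $H_1(U^*,\C)$, we get $\int_\gamma\omega=0$, which contradicts the computation above.

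The only point needing care is to make sure the pairing is well defined over $\C$ and that vanishing over $\C$ suffices. Vanishing in $H_1(U^*,\C)=H_1(U^*,\Z)\otimes\C$ means the integral class is torsion. A torsion class has zero period, because $m\int_\gamma\omega=\int_{m\gamma}\omega=0$ forces $\int_\gamma\omega=0$. This is the same mechanism as in Lemma~\ref{lem:normalization-obstructions}.
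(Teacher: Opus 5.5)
Your proposal is correct and follows the paper's proof: the paper also computes $\int_\gamma\omega=\int_0^\tau\omega(X)\,dt=\tau\neq0$, then observes that integration of a closed form factors through $H_1(U^*,\C)$ and so must vanish on $[\gamma]$. Your remarks on the parametrization and on torsion classes only make explicit what the paper leaves implicit.
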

\begin{proof}
A normalized form would have
$\int_\gamma\omega=\int_0^\tau\omega(X)\,dt=\tau\ne0$.
Integration of a closed complex-valued one-form factors through
$H_1(U^*,\C)$ and therefore vanishes on $[\gamma]$, a
contradiction.
\end{proof}

The preceding homological observation is particularly effective for rational-homology-sphere links.

\begin{corollary}[Rational-homology-sphere links]\label{cor:qh-link}
Let $(S,p)$ be an isolated normal surface singularity with a
holomorphic $\C^*$-action fixing $p$. Choose a sufficiently small
$S^1$-invariant link $L$ about $p$ and a punctured neighborhood
$U^*$ containing it such that $L\hookrightarrow U^*$ is a
homotopy equivalence. Suppose $H_1(L,\Q)=0$ and the circle action
has a nonconstant orbit in $L$. Then there is no closed
holomorphic one-form on $U^*$ normalized by the infinitesimal generator.
No invariance of the bounded neighborhood under the full
$\C^*$-action is assumed.
\end{corollary}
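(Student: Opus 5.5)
The plan is to reduce the statement to Proposition~\ref{prop:orbit-homology}, applied to the smooth punctured neighborhood $U^*$. Since $p$ is an isolated singular point, $U^*$ lies in $S_{\rm reg}$ once the neighborhood is small, so $U^*$ is a complex manifold. We then take a point $q\in L$ on a nonconstant orbit of the circle action, and the loop $\gamma(\theta)=e^{i\theta}\cdot q$, $0\le\theta\le 2\pi$.

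The first step checks that $\gamma$ is a periodic complex-time orbit of $X$ of the type required by Proposition~\ref{prop:orbit-homology}. Since the flow is $\varphi_t(q)=\Phi(e^t,q)$, we have $\gamma(\theta)=\varphi_{i\theta}(q)$. Thus $\gamma$ is the orbit segment $t\mapsto\varphi_t(q)$ with $t$ running along the segment from $0$ to $\tau=2\pi i\neq0$, and $\varphi_{2\pi i}=\mathrm{id}$ closes it up. The loop is nonconstant by hypothesis. It stays inside $L\subset U^*$ because $L$ is $S^1$-invariant. This is why only circle invariance of $L$, not invariance of $U^*$ under the full $\C^*$-action, is needed. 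It also matches the final sentence of the statement. The integral computation in Proposition~\ref{prop:orbit-homology} only uses $\dot\gamma=iX(\gamma)$ along this path, so no completeness of the flow on $U^*$ is used.

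The second step shows that $[\gamma]=0$ in $H_1(U^*,\C)$. The loop $\gamma$ lies in $L$ and $H_1(L,\Q)=0$. By universal coefficients, $H_1(L,\C)=H_1(L,\Q)\otimes_\Q\C=0$, so the class of $\gamma$ vanishes already in $H_1(L,\C)$. Since $L\hookrightarrow U^*$ is a homotopy equivalence, it induces an isomorphism $H_1(L,\C)\to H_1(U^*,\C)$, so $[\gamma]=0$ in $H_1(U^*,\C)$. Actually only the push-forward is needed, so functoriality of $H_1$ under the inclusion already suffices here.

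Finally, suppose $\omega$ is a closed holomorphic one-form on $U^*$ with $\omega(X)=1$. Proposition~\ref{prop:orbit-homology} then gives the contradiction $2\pi i=\int_\gamma\omega=0$. The only point requiring care is confirming that $U^*$ avoids the singular point, so that it is a genuine manifold on which Proposition~\ref{prop:orbit-homology} applies. The existence of such an invariant link with the homotopy equivalence is part of the hypotheses, via the conic structure theorem, so I do not expect a real obstacle.
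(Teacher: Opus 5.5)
Your proposal is correct and follows the paper's proof essentially verbatim. The paper also takes the nonconstant circle orbit in $L$ as a closed complex-time orbit with nonzero period, notes that its class vanishes in $H_1(U^*,\C)$ because $H_1(L,\Q)=0$, and concludes by Proposition~\ref{prop:orbit-homology}.
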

\begin{proof}
The nonconstant circle orbit is a closed complex-time orbit
(with nonzero period), entirely contained in $L\subset U^*$.
Its class is torsion in $H_1(L,\Z)$ and hence zero in
$H_1(U^*,\C)$. Apply Proposition~\ref{prop:orbit-homology}.
\end{proof}

\begin{example}[The quadratic cone: obstruction even without closedness]
\label{ex:quadratic-cone}
Let $S=\{xy=z^2\}\subset\C^3$, with the scalar action
$\lambda\cdot(x,y,z)=(\lambda^2x,\lambda^2y,\lambda^2z)$.
The origin is the unique singularity, and
$S\simeq\C^2/\{\pm I\}$ by $(u,v)\mapsto(u^2,v^2,uv)$.
The regular locus has $H_1(S_{\rm reg},\Z)\simeq\Z/2\Z$;
thus a closed normalized form is excluded by
Proposition~\ref{prop:orbit-homology}. More strongly, no
\emph{reflexive} normalized holomorphic form exists. Indeed, its
pullback to $\C^2\setminus\{0\}$ would be
$A(u,v)\,du+B(u,v)\,dv$ with $uA+vB=1$. Hartogs extension
extends $A,B$ across the origin, making $uA+vB=1$ at the
origin, an impossibility.
\end{example}

The homological obstruction extends far beyond rational homology
sphere links. The Seifert structure and weighted-homogeneous classification
are established results, not new assertions here: see Orlik--Wagreich
\cite{OrlikWagreich1971} in the algebraic setting and Camacho--Movasati--Sc\'ardua
\cite{CamachoMovasatiScardua2009} for good analytic actions on normal Stein surfaces.
Recall that the link of a weighted-homogeneous normal
surface singularity with an isolated fixed point is naturally a
Seifert three-manifold. We use the rational Seifert Euler number
$e_{\rm orb}$, with the sign convention immaterial for its
nonvanishing. If the Seifert data are $(b;(\alpha_i,\beta_i))$,
one convention is $e_{\rm orb}=b+\sum_i\beta_i/\alpha_i$.

\begin{lemma}[Seifert fiber homology]\label{lem:seifert-fiber}
Let $L$ be a closed oriented Seifert three-manifold with oriented
base orbifold, and let $h\in H_1(L,\Z)$ be the class of a regular
circle fiber. If $e_{\rm orb}\ne0$, then $h$ is torsion.
\end{lemma}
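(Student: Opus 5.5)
We compute $H_1(L,\Z)$ directly from the standard presentation of the fundamental group of a Seifert three-manifold and show that the fiber class $h$ has finite order whenever $e_{\rm orb}\neq0$.

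Let the base orbifold be a closed oriented surface of genus $g$ with cone points of orders $\alpha_1,\dots,\alpha_r$. Write the Seifert invariants in normalized form $(b;(\alpha_1,\beta_1),\dots,(\alpha_r,\beta_r))$ with $\gcd(\alpha_i,\beta_i)=1$. Since both the base and $L$ are oriented, the regular fiber $h$ is central, and
\[
\pi_1(L)=\Bigl\langle a_k,b_k\ (1\le k\le g),\ c_1,\dots,c_r,\ h\ \Bigm|\ h\ \text{central},\ c_i^{\alpha_i}h^{\beta_i}=1,\ \prod_k[a_k,b_k]\,c_1\cdots c_r=h^{b}\Bigr\rangle .
\]
The sign conventions may differ from those in the literature, but this only changes the sign of $e_{\rm orb}$, which is harmless here.

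Abelianizing kills the commutators. The classes of $a_k,b_k$ then become free generators that appear in no relation, so they split off as a direct summand $\Z^{2g}$. What remains is the abelian group generated by $c_1,\dots,c_r,h$, subject to
\[
\alpha_i c_i+\beta_i h=0\quad(1\le i\le r),\qquad c_1+\dots+c_r-b\,h=0 .
\]
Multiply the last relation by $A=\alpha_1\cdots\alpha_r$, and use $A c_i=-(A/\alpha_i)\beta_i h$ from the first $r$ relations. This gives
\[
-\Bigl(\sum_i \tfrac{A}{\alpha_i}\beta_i + A b\Bigr)h=0,\qquad\text{that is,}\qquad A\,e_{\rm orb}\,h=0 .
\]
Here $A\,e_{\rm orb}$ is an integer, and it is nonzero precisely when $e_{\rm orb}\ne0$. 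Hence $h$ is torsion in $H_1(L,\Z)$.

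An equivalent route is the rational Gysin/Euler class argument: the rational Euler class of the Seifert fibration $L\to$ base orbifold is $e_{\rm orb}$ times the orbifold fundamental class. When it is nonzero, the fiber class is killed rationally, so $h=0$ in $H_1(L,\Q)$. The main point to check carefully is the presentation itself, with the correct convention for the central extension relation. I would cite the standard presentation (for instance, Orlik's or Jankins--Neumann's treatments) rather than rederive it. The case $r=0$ is included: it is the circle bundle with Euler number $b$, where the computation gives $b\,h=0$.
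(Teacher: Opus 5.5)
Your proof is correct and is essentially the paper's argument: you abelianize the standard presentation of $\pi_1(L)$ and eliminate the cone-point generators to obtain $\bigl(b+\sum_i\beta_i/\alpha_i\bigr)h=0$, and your relations become the paper's under the substitution $c_i=-q_i$. The only difference is cosmetic: you clear denominators by multiplying by $A=\alpha_1\cdots\alpha_r$ rather than tensoring with $\Q$, which has the small bonus of exhibiting an explicit nonzero integer, $A\,e_{\rm orb}$, that annihilates $h$.
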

\begin{proof}

Let $L$ denote the link of the singularity, endowed
with its natural Seifert fibration. Write
$(g;b;(\alpha_1,\beta_1),\ldots,
(\alpha_r,\beta_r))$ for its Seifert invariants,
where $\alpha_i\geq 2$ is the multiplicity of
the $i$-th exceptional fiber and
$\gcd(\alpha_i,\beta_i)=1$.

Let $h\in H_1(L,\mathbb Z)$ denote the homology
class of an oriented regular fiber, and let
$q_i\in H_1(L,\mathbb Z)$ denote the classes
arising from small loops around the exceptional
points of the base orbifold, lifted using a
section over their complement.

The standard presentation of the fundamental
group of a Seifert fibered three-manifold,
after abelianization, gives the relations
\[
\alpha_i q_i-\beta_i h=0,
\qquad i=1,\ldots,r,
\]
and
\[
q_1+\cdots+q_r+bh=0.
\]
Here the signs are fixed by our choice of
orientations and Seifert invariants; see the
classical presentation of Seifert fibered
manifold groups.

Tensoring with $\mathbb Q$, the first relations
give
\[
q_i=\frac{\beta_i}{\alpha_i}h.
\]
Substitution into the remaining relation yields
\[
\left(
b+\sum_{i=1}^r\frac{\beta_i}{\alpha_i}
\right)h=0
\quad\text{in }H_1(L,\mathbb Q).
\]

Thus, if the rational Seifert Euler number
\[
e_{\mathrm{orb}}
=
b+\sum_{i=1}^r\frac{\beta_i}{\alpha_i}
\]
is nonzero, then $h=0$ in $H_1(L,\mathbb Q)$.
Equivalently, the regular-fiber class is
torsion in $H_1(L,\mathbb Z)$.
 After tensoring with $\Q$, elimination
of the $q_i$ gives
$(b+\sum_i\beta_i/\alpha_i)h=0$ for the
corresponding choice of signs. This coefficient is, up to the
standard sign convention, the rational Euler number.
Its nonvanishing implies $h=0$ in $H_1(L,\Q)$, i.e.
$h$ is torsion integrally.
\end{proof}

The Seifert description of an isolated surface singularity makes the orbit obstruction explicit.

\begin{theorem}[Seifert Euler obstruction]\label{thm:seifert-obstruction}
Let $(S,p)$ be an isolated normal surface singularity with a
holomorphic $\C^*$-action fixing $p$. Assume there is a sufficiently
small $S^1$-invariant Seifert link $L\subset S_{\rm reg}$ and a
punctured representative $U^*$ containing $L$.
Assume that the Seifert circles are the orbits of this $S^1$-action,
and that the induced Seifert fibration has nonzero rational Euler
number $e_{\rm orb}(L)$. Then no closed holomorphic one-form on
$U^*$ is normalized by the infinitesimal generator.
In particular, the conclusion applies to the standard link
fibrations of isolated normal surface singularities with good
(weighted-homogeneous) $\C^*$-actions.
\end{theorem}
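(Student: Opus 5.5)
The plan is to reduce the statement to Proposition~\ref{prop:orbit-homology} by combining it with Lemma~\ref{lem:seifert-fiber}. Suppose, for contradiction, that $\omega$ is a closed holomorphic one-form on $U^*$ with $\omega(X)=1$.

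First I will choose a regular Seifert fiber $h\subset L$. By hypothesis it is a nonconstant $S^1$-orbit $\theta\mapsto e^{i\theta}\cdot q$, for $q\in L$ on a regular fiber. In complex time it is the closed orbit $t\mapsto\varphi_t(q)$ for $t\in[0,2\pi i]$, since $\varphi_{2\pi i}=\mathrm{id}$. Because the fiber is regular, the $S^1$-stabilizer of $q$ is trivial, so the period is exactly $\tau=2\pi i\neq0$. Even for an exceptional fiber the period would be some $2\pi i/k\neq0$, and that would also suffice. Normalization then gives $\int_h\omega=\tau\neq0$.

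Second, I will use that $L$ is a closed oriented Seifert three-manifold whose fibers are these orbits. Orientability comes from the complex orientation of $U^*\subset S_{\rm reg}$ together with the orientation of the orbits by the circle action, so the base orbifold is oriented. Since $e_{\rm orb}(L)\ne0$, Lemma~\ref{lem:seifert-fiber} shows that $[h]$ is torsion in $H_1(L,\Z)$. Its image under the inclusion $L\hookrightarrow U^*$ is therefore torsion in $H_1(U^*,\Z)$, and so it vanishes in $H_1(U^*,\C)$. Unlike Corollary~\ref{cor:qh-link}, no homotopy equivalence is needed: pushing forward preserves torsion. Proposition~\ref{prop:orbit-homology}, applied on the smooth space $U^*$, then yields the contradiction, because integration of the closed form $\omega$ factors through $H_1(U^*,\C)$.

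For the final sentence, I will recall from Orlik--Wagreich \cite{OrlikWagreich1971} and \cite{CamachoMovasatiScardua2009} that a good $\C^*$-action yields an $S^1$-invariant link whose Seifert fibration is given by the orbits. The link is then the boundary of a negative definite plumbing; equivalently, it is an orbifold circle bundle over the quotient curve $E=(S\setminus\{p\})/\C^*$ with negative orbifold degree. Hence $e_{\rm orb}\neq0$, using the Grauert negativity of the exceptional curve in the Seifert partial resolution, and the theorem applies.

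The main obstacle is this last point, namely justifying $e_{\rm orb}\neq0$ in the weighted-homogeneous case. I expect to cite it as classical, noting that the link is the unit circle bundle of a negative orbifold line bundle, with negativity coming from contractibility. I also need to match the orientation and sign conventions, although only nonvanishing matters.
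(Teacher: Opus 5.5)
Your proposal is correct and follows the paper's proof essentially step for step: a regular-fiber orbit has nonzero complex-time period, Lemma~\ref{lem:seifert-fiber} makes its class torsion in $H_1(L,\Z)$ and hence in $H_1(U^*,\Z)$, and Proposition~\ref{prop:orbit-homology} then gives the contradiction. The only variation is how you justify $e_{\rm orb}\neq0$ for good actions. You use negativity of the orbifold normal degree of the central curve (Grauert/Mumford), whereas the paper takes the one-by-one Schur complement of the negative-definite star-shaped plumbing matrix and identifies it with $e_{\rm orb}$ via Neumann's formula; these are two standard forms of the same classical fact.
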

\begin{proof}
Choose a point of $L$ on a regular (nonexceptional) Seifert
fiber. Its parametrized circle orbit represents a nonzero integer
multiple of the class $h$ of a regular fiber and, by
Lemma~\ref{lem:seifert-fiber}, is torsion in
$H_1(L,\Z)$. Hence it is torsion in $H_1(U^*,\Z)$.
The inclusion $L\hookrightarrow U^*$ maps torsion homology classes to torsion classes. The same orbit, parametrized by the complex-time holomorphic
flow, has nonzero time period; Proposition~\ref{prop:orbit-homology}
therefore excludes a closed normalized one-form.
For an isolated good $\C^*$-action, an adapted small link
carries the canonical Seifert fibration described by
Orlik--Wagreich \cite[Sections~1--2]{OrlikWagreich1971}; the
analytic setting and equivariant resolution are discussed in
\cite{CamachoMovasatiScardua2009}. For the good-action link, the canonical star-shaped resolution
plumbing has a negative-definite intersection matrix. Eliminating
the negative-definite arm blocks gives a one-by-one Schur complement,
which is nonzero because the full intersection matrix is negative
definite. The standard Seifert plumbing formula identifies this
Schur complement, up to sign, with the rational Euler number
\cite{Neumann1981}. Hence $e_{\rm orb}(L)\ne0$.
For the weighted-action viewpoint see also \cite{Pinkham1977};
compare \cite{Neumann1981,NemethiNicolaescu2004}.
The link is chosen equivariantly, without demanding that a bounded
punctured neighborhood be preserved by the full complex action.
\end{proof}

The same obstruction has a formulation in terms of the contraction morphism.

\begin{corollary}[Non-surjectivity at a good singularity]
\label{cor:good-contraction}
Let $S$ be a normal Stein surface with a good holomorphic
$\C^*$-action having an isolated singular fixed point $p$.
Then the contraction morphism
$\iota_X:\Omega_S^{[1]}\to\mathcal O_S$ is not surjective
at $p$.
\end{corollary}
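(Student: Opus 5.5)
We argue by contradiction and reduce to the Seifert Euler obstruction (Theorem~\ref{thm:seifert-obstruction}) through the local normalization lemma. Suppose the contraction $\iota_X:\Omega_S^{[1]}\to\mathcal O_S$ were surjective at $p$.

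\emph{Step 1.} Apply Lemma~\ref{lem:local-stein-normalization}. Since $p$ is a fixed point (Proposition~\ref{prop:normal-fixed-sing}), we may choose a sufficiently small $S^1$-invariant Stein neighborhood $U$ of $p$ carrying a reflexive form $\omega$ with $\omega(X)=1$ and $d\omega=0$ on $U\cap S_{\rm reg}$. Shrinking, we take $U$ to be a small invariant neighborhood in which $p$ is the only singular point, so $U^*=U\setminus\{p\}$ is smooth. Then $\omega|_{U^*}$ is an honest closed holomorphic one-form on $U^*$ normalized by $X$.

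\emph{Step 2.} Choose inside $U$ an $S^1$-invariant Seifert link $L\subset U^*$ adapted to the good action, as in Orlik--Wagreich \cite{OrlikWagreich1971} and \cite{CamachoMovasatiScardua2009}. Its Seifert circles are the $S^1$-orbits. We arrange the punctured representative so that it contains $L$; for instance, take the intersection of $U^*$ with a small invariant sublevel set of an averaged plurisubharmonic exhaustion, so that $L$ is a level set in it.

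\emph{Step 3.} The last paragraph of the proof of Theorem~\ref{thm:seifert-obstruction} gives $e_{\rm orb}(L)\neq0$ for good actions. This comes from negative definiteness of the star-shaped resolution plumbing and the Schur complement identification \cite{Neumann1981}. Theorem~\ref{thm:seifert-obstruction} then forbids any closed holomorphic one-form on the punctured representative normalized by $X$. This contradicts Step~1, since we restrict $\omega$ to the punctured representative.

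\emph{Anticipated obstacle.} The delicate point is compatibility of neighborhoods. Lemma~\ref{lem:local-stein-normalization} supplies the closed form only on some small invariant Stein $U$, whereas Theorem~\ref{thm:seifert-obstruction} requires a punctured representative containing an adapted link. Two facts resolve this:
\begin{enumerate}
\item Adapted invariant links exist in arbitrarily small neighborhoods of $p$, because good actions contract toward $p$.
\item Restricting $\omega$ to any smaller open set preserves closedness and normalization.
\end{enumerate}
So it suffices to place the link inside $U$.

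Alternatively, the obstruction can be run directly. The regular orbit in $L$ has torsion class in $H_1(L,\Z)$ by Lemma~\ref{lem:seifert-fiber}, hence torsion in $H_1(U^*,\Z)$. Proposition~\ref{prop:orbit-homology} then yields $\int\omega=2\pi i\neq0$ along a null-homologous cycle, which is the contradiction. This second route needs only $L\subset U^*$ and avoids any homotopy-equivalence requirement on the neighborhood.
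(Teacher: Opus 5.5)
Your proposal is correct and follows the paper's own proof. Surjectivity at $p$ gives a normalized reflexive form near $p$, averaging over $S^1$ on a small invariant neighborhood makes it closed, and this contradicts Theorem~\ref{thm:seifert-obstruction}; the paper cites Lemma~\ref{lem:local-reflexive-average} directly for the averaging, which is exactly what Lemma~\ref{lem:local-stein-normalization} does internally. Your explicit placement of the adapted link inside the neighborhood carrying the closed form, and your unpacking via Proposition~\ref{prop:orbit-homology}, only make precise the paper's ``after shrinking'' step and the proof of the Seifert obstruction itself.
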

\begin{proof}
If it were surjective at $p$, coherence and the local definition
of surjectivity would provide, on a neighborhood of $p$, a
reflexive one-form $\eta$ with $\eta(X)=1$ on the regular locus.
After shrinking to a sufficiently small $S^1$-invariant
neighborhood, Lemma~\ref{lem:local-reflexive-average} produces a closed normalized reflexive form on the
punctured neighborhood. This contradicts
Theorem~\ref{thm:seifert-obstruction}.
\end{proof}

The following example illustrates the Seifert Euler
obstruction for a classical isolated surface singularity
whose projectivization is an elliptic curve.

\begin{example}[An elliptic cubic cone]\label{ex:elliptic-cone}
Let $S=\{x^3+y^3+z^3=0\}\subset\C^3$ with the scalar
$\C^*$-action. Its projectivization is a smooth elliptic cubic
$E\subset\mathbb P^2$, and $S^*$ is the complement of the zero
section in $\mathcal O_E(-1)$, of degree $-3$.
The circle fibration has Euler number $-3$, and its homology
sequence yields
$H_1(S^*,\Z)\simeq\Z^2\oplus\Z/3\Z$.
The scalar orbit is the torsion fiber class. Hence a closed
normalized form cannot exist although
$H_1(S^*,\Q)\simeq\Q^2\ne0$.
\end{example}

For comparison, principal bundles over smooth curves admit a direct description in terms of flat connections.

\begin{proposition}
\label{prop:flat-line}
Let $\pi:P\to B$ be a holomorphic principal $\C^*$-bundle over
a connected Riemann surface, with normalized infinitesimal generator $X$.
The following conditions are equivalent:
\begin{enumerate}
\item $P$ has a closed holomorphic one-form $\omega$ with
$\omega(X)=1$;
\item $P$ has a holomorphic principal connection.
\end{enumerate}
If $B$ is compact, these are equivalent to $\deg L=0$ for the
associated line bundle $L$. If $B$ is noncompact, they always hold.
\end{proposition}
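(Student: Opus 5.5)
We will prove the equivalence (1)$\Leftrightarrow$(2) by working in local trivializations, and then obtain the degree statements from the standard theory of holomorphic connections on line bundles over curves. Over an open set $U\subset B$ with a trivialization $P|_U\simeq U\times\C^*$, write the fibre coordinate as $\lambda$. Then $X=\lambda\partial_\lambda$, and a one-form with $\omega(X)=1$ has the local form
\[
\omega=\frac{d\lambda}{\lambda}+a(b,\lambda)\,db .
\]
Here $b$ is a local coordinate on $U$.

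\textbf{(1)$\Rightarrow$(2).} Assume $\omega$ is closed. Since $X$ is the generator of the $\C^*$-action, Cartan's formula gives $\mathcal L_X\omega=d(\omega(X))+\iota_Xd\omega=0$. Hence $\omega$ is invariant under the flow, and therefore invariant under the connected group $\C^*$. In the local form this invariance says that $a$ is independent of $\lambda$, so $\omega$ is a principal connection form. Under a change of trivialization $\lambda'=g(b)\lambda$ with $g\in\mathcal O^*(U\cap U')$, the connection coefficients change by
\[
a'=a-\frac{dg}{g}.
\]
This is exactly the transformation law of a holomorphic principal connection.

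\textbf{(2)$\Rightarrow$(1).} A holomorphic principal connection is given by an invariant form $\omega$ with $\omega(X)=1$, written locally as above with $a=a(b)$. Then
\[
d\omega=da\wedge db=0,
\]
because $\partial_\lambda a=0$ and $B$ has complex dimension one, so any holomorphic two-form pulled back from $B$ vanishes. Thus $\omega$ is closed automatically.

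\textbf{Degree conditions.} Holomorphic connections on $P$ correspond to holomorphic connections on the associated line bundle $L$. On a curve every holomorphic connection is flat. Their existence is governed by the Atiyah class of $L$, which lies in $H^1(B,\Omega^1_B)$ and is represented by the cocycle $dg_{ij}/g_{ij}$. A connection exists exactly when this cocycle is a coboundary $a_i-a_j$, which is the transformation law from the first step.
\begin{itemize}
\item If $B$ is noncompact, it is Stein, so $H^1(B,\Omega^1_B)=0$ and a connection always exists.
\item If $B$ is compact, the Atiyah class equals $2\pi i\,c_1(L)$ under $H^1(B,\Omega^1_B)\simeq\C$, via Serre duality and the residue or trace map. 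Hence vanishing is equivalent to $\deg L=0$. Equivalently, one can use the Weil–Atiyah criterion directly: a line bundle admits a holomorphic connection if and only if it has degree zero.
\end{itemize}

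The only delicate step is the identification of the Atiyah class with the degree in the compact case. We will quote this classical result, Atiyah's theorem, rather than reprove it. If a more self-contained argument is preferred, integrate $\omega$ over the boundary of a disc in a trivializing chart: Stokes' theorem shows that the transition function has zero winding once $\omega$ is closed, which recovers $\deg L=0$.
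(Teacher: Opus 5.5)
Your proposal is correct and follows essentially the same route as the paper. Cartan's formula gives invariance, and hence a connection form, for (1)$\Rightarrow$(2); the curvature (your $da\wedge db$) vanishes for dimensional reasons on the curve for (2)$\Rightarrow$(1); and the compact case is the Atiyah--Weil criterion. The only variation is the noncompact case: you kill the Atiyah class via $H^1(B,\Omega^1_B)=0$ on the Stein curve, whereas the paper shows $L$ is holomorphically trivial (using $H^1(B,\mathcal O_B)=0$ and $H^2(B,\Z)=0$) and takes $dq/q$; both arguments are valid.
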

\begin{proof}
A closed normalized $\omega$ is invariant by Cartan's formula,
and so is a connection form. Conversely, the curvature of a
holomorphic principal connection is a basic holomorphic two-form,
which vanishes on the one-dimensional base, hence the connection
is closed. For compact $B$ the degree-zero condition is the
classical Atiyah--Weil criterion for holomorphic line bundles.
For noncompact $B$, the Stein and Oka properties of open Riemann
surfaces together with $H^2(B,\Z)=0$ imply that $L$ is
holomorphically trivial; then $dq/q$ is the required form.
\end{proof}

The next example shows that, in complex dimension three,
a singular normal Stein space may admit a fixed-point-free
holomorphic $\C^*$-action together with a closed normalized
reflexive holomorphic one-form.

\begin{example}[A singular Stein threefold with normalized periods]
\label{ex:singular-threefold}
Put $Y=\{xy=z^2\}\subset\C^3$ and $S=Y\times\C^*$.
This is a normal Stein threefold with singular locus
$\{0\}\times\C^*$. The action on the second factor is globally
free, and $X=q\partial_q$ has the global holomorphic reflexive
form $\omega=dq/q$. The form is closed and normalized and has
period group $2\pi i\Z$. Thus the surface rigidity of
Proposition~\ref{prop:normal-fixed-sing} is dimension-sensitive.
\end{example}

\section{Normalized time forms and invariant transverse foliations}\label{sec:hierarchy}
The following elementary criteria relate normalized forms to invariant transverse holomorphic foliations.

\begin{proposition}[Normalization is automatic; closedness is not]\label{prop:CartanB}
Every nowhere-vanishing holomorphic vector field on a Stein manifold admits a global holomorphic one-form $\alpha$ with $\alpha(X)=1$. Even for a complete vector field on a Stein surface, such a form need not be closed, and there need not exist any closed normalized form.
\end{proposition}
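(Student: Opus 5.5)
The statement has two parts. The first is an existence claim, which I would prove by Cartan's Theorem~B. The second is a pair of negative claims, which I would settle with one explicit example whose closed time forms can be classified completely.

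For the existence part, let $X$ be nowhere vanishing on the Stein manifold $M$. Consider the contraction morphism $\iota_X:\Omega^1_M\to\mathcal O_M$.
\begin{enumerate}
\item Since $X(p)\ne0$ at every point, in local coordinates some component of $X$ is a unit. Hence $\iota_X$ is a surjective morphism of locally free sheaves, and its kernel $\mathcal K$ is coherent; in fact it is locally free of rank $n-1$.
\item The short exact sequence $0\to\mathcal K\to\Omega^1_M\to\mathcal O_M\to0$ gives $H^0(M,\Omega^1_M)\to H^0(M,\mathcal O_M)\to H^1(M,\mathcal K)$.
\item The last group vanishes by Theorem~B, so the constant section $1$ lifts to a global $\alpha$ with $\alpha(X)=1$.
\end{enumerate}
This is the smooth analogue of the argument already used in (1)$\Rightarrow$(2) of Theorem~\ref{prop:reflexive-contraction}.

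For the negative parts, the simplest route is a complete field on a simply connected Stein surface. Take $M=\C^2$ and $X=\partial_x+y\,\partial_y$. Its flow is $(x,y)\mapsto(x+t,e^ty)$, so $X$ is complete and nowhere vanishing. A normalized form exists, for instance $\alpha=dx$. I will show that every closed normalized form is exact and that this already gives the desired obstructions.
\begin{enumerate}
\item Since $H^1(\C^2,\C)=0$, any closed normalized $\omega$ equals $df$ with $X(f)=1$.
\item The flow has no periodic orbits, so there is no orbit-period obstruction here; this example does not itself show nonexistence of closed forms.
\item Therefore I would switch to an example where closed normalized forms genuinely fail to exist. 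The natural source is a periodic orbit whose class is torsion, using Proposition~\ref{prop:orbit-homology}.
\end{enumerate}

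The example I would use is the complement $M=\C^2\setminus\{xy=1\}$, or more robustly a Stein surface carrying a complete flow with a periodic orbit that is null-homologous. Concretely, take a Stein surface $M$ with $H_1(M,\Q)=0$ and a complete nowhere-vanishing $X$ having a nonconstant periodic orbit.
\begin{enumerate}
\item Let $M=\{xy=z^2-1\}\subset\C^3$, the smoothed $A_1$ surface, which is Stein, diffeomorphic to $T^*S^2$, and simply connected.
\item Let $X$ be the complete field generated by the rotation $\C^*$-action $(x,y,z)\mapsto(\lambda x,\lambda^{-1}y,z)$, or a suitable complete combination of such fields.
\item A nonconstant periodic orbit of this action is null-homologous since $H_1(M)=0$. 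Proposition~\ref{prop:orbit-homology} then forbids any closed normalized form.
\end{enumerate}

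The difficulty is that this $X$ vanishes at the points $x=y=0$, $z=\pm1$. I would therefore need a nowhere-vanishing complete field with a periodic orbit on a simply connected Stein surface. The first choice to try is $M=\C^*\times\C$ replaced by an affine surface, or a fixed-point-free $\C^*$-action on an affine surface. However, Theorem~\ref{thm:intro:b}(i) shows that fixed-point-free $\C^*$-actions always admit closed normalized forms. So the periodic orbit must come from a $\C$-flow that is not a $\C^*$-action, for example a complete field on $\C^2$ with some isolated periodic orbits, such as a shear-type field $X=\partial_x+g(x)\,y\,\partial_y$ with suitable $g$. This remains the main obstacle.

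A cleaner fallback is to defer to the later counterexample in Theorem~\ref{stein:thm:counterexample}. Failing that, one can choose a flow whose orbit space forces $X(f)=1$ to have no holomorphic solution, for instance when some orbit is relatively compact, since then a holomorphic $f$ with $X(f)=1$ would be unbounded along a bounded orbit. Producing an explicit such example, together with the verification that $\alpha$ itself is non-closed, is where I expect the work to lie.
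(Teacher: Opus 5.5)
Your existence argument, lifting $1$ through $\iota_X:\Omega^1_M\to\mathcal O_M$ by Cartan's Theorem~B, is exactly the paper's argument and is fine. The gap is in the negative half. You never produce a complete nowhere-vanishing field on a Stein surface with $\mathcal T_X=\varnothing$, and each route you list fails:
\begin{itemize}
\item The rotation field on the smoothed $A_1$ surface has zeros.
\item Every field $\partial_x+g(x)y\,\partial_y$ on $\C^2$ has the closed normalized form $dx$.
\item The surfaces $M_\rho$ of Theorem~\ref{stein:thm:counterexample} satisfy $\mathcal T_X\ne\varnothing$; they only rule out discrete periods.
\item A nonconstant orbit of a complete holomorphic field on a Stein manifold is never relatively compact. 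Holomorphic functions composed with the orbit map $\C\to M$ would be bounded entire functions, hence constant by Liouville, and holomorphic separability would then force $X(p)=0$.
\end{itemize}
Once $\mathcal T_X=\varnothing$ is established, non-closedness of the normalized form from the first part is automatic. So everything hinges on an example you do not supply.

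The missing idea is that you do not need a single null-homologous periodic orbit. It suffices to have a continuous family of homologous periodic orbits whose complex periods vary. The paper takes $M=\C^*\times\C$ and $X=e^w z\,\partial_z$, a $\C^*$-generator multiplied by the nowhere-zero first integral $e^w$. Its flow $(z,w)\mapsto(ze^{e^w t},w)$ is complete. The form $\alpha=e^{-w}\,dz/z$ is normalized, and $d\alpha=-e^{-w}\,dw\wedge dz/z\ne0$. For any normalized $\eta$, the circle $\gamma_w(\theta)=(e^{i\theta},w)$ satisfies $\dot\gamma_w=ie^{-w}X$, so $\int_{\gamma_w}\eta=2\pi i\,e^{-w}$. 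All the $\gamma_w$ are homologous, so a closed $\eta$ would have a $w$-independent period, which is a contradiction. This fits your own observation that the example must be a $\C$-flow that is not a $\C^*$-action. Without such a construction, the second assertion of the proposition remains unproved.
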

\begin{proof}
Contraction by $X$ gives a surjective morphism of locally free coherent sheaves $\Omega_M^1\to\mathcal O_M$, whose kernel is coherent. The induced cohomology sequence and Cartan's Theorem B give a global splitting of $1\in\mathcal O(M)$.
For the second assertion take $M=\C^*\times\C$, with coordinates $(z,w)$, and
\[
X=e^w z\partial_z,\qquad \alpha=e^{-w}\,\frac{dz}{z}.
\]
The flow is $(z,w)\mapsto(ze^{e^w t},w)$ for all $t\in\C$, so $X$ is complete; $\alpha(X)=1$, but $d\alpha=-e^{-w}dw\wedge dz/z\ne0$. For any other normalized one-form $\eta$, restriction to the circle $\gamma_w(\theta)=(e^{i\theta},w)$ necessarily gives $\int_{\gamma_w}\eta=2\pi i e^{-w}$. The circles $\gamma_w$ are homologous while the proposed periods vary with $w$. Therefore no closed normalized $\eta$ exists.
\end{proof}

The following example illustrates how varying orbit periods
may obstruct the existence of a closed normalized holomorphic
one-form.

\begin{example}\label{ex:varyingperiods}
On the Stein surface $M=\C^*\times\C$ take $X=e^wz\partial_z$ and $\alpha=e^{-w}dz/z$. Then $X$ is nonsingular and complete and $\alpha(X)=1$, yet $d\alpha\ne0$. The orbit through $(z,w)$ has fundamental period $2\pi i e^{-w}$, so the periods of a hypothetical closed normalized form around homologous orbit circles would depend on $w$. Therefore $\T_X=\varnothing$. This exhibits why existence of a normalized \emph{holomorphic} form does not suffice for Tischler approximation.
\end{example}

On a complex surface, the invariance condition simplifies considerably.

\begin{proposition}[Invariance criterion on a surface]\label{prop:invclosed}
If $\dim_{\C}M=2$ and $\alpha(X)=1$, then
\[
d\alpha=0\quad\Longleftrightarrow\quad\mathcal L_X\alpha=0\quad\Longleftrightarrow\quad\ker\alpha\text{ is invariant under the local flow of }X.
\]
The last equivalence is for the \emph{normalized} form $\alpha$.
\end{proposition}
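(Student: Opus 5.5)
The plan is to use Cartan's formula $\mathcal L_X\alpha=\iota_X d\alpha+d(\iota_X\alpha)$ and exploit $\alpha(X)=1$. Since $\iota_X\alpha$ is the constant $1$, the last term vanishes and
\[
\mathcal L_X\alpha=\iota_X d\alpha .
\]
The equivalence $d\alpha=0\Leftrightarrow\mathcal L_X\alpha=0$ then reduces to the pointwise injectivity of $\iota_X$ on holomorphic two-forms, exactly the argument used in Lemma~\ref{lem:local-reflexive-average}.

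For the first equivalence, if $d\alpha=0$ then $\mathcal L_X\alpha=0$ is immediate from the identity above. Conversely, suppose $\iota_Xd\alpha=0$. Write $d\alpha=h\,\Omega$ locally, with $\Omega$ a nonvanishing local holomorphic volume form; this is possible because $\dim_\C M=2$, so two-forms have rank one. Since $\alpha(X)=1$, the field $X$ is nowhere zero, and $\iota_X\Omega$ is then a nowhere-vanishing one-form. Hence $h\,\iota_X\Omega=0$ forces $h=0$, so $d\alpha=0$.

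For the second equivalence, interpret invariance of $\ker\alpha$ under the local flow $\varphi_t$ as $\varphi_t^*\alpha\wedge\alpha=0$ wherever defined, i.e.\ $\varphi_t^*\alpha=g_t\alpha$. Differentiating at $t=0$ shows that invariance is equivalent to $\mathcal L_X\alpha=g\,\alpha$ for some holomorphic $g$. For the converse direction I would use the standard fact that a line field is flow-invariant iff its Lie derivative is proportional to it, solving the linear ODE $\frac{d}{dt}\varphi_t^*\alpha=\varphi_t^*(g\alpha)$ along orbits. In particular, $\mathcal L_X\alpha=0$ gives invariance.

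The main point, and the only place normalization matters beyond Cartan, is the reverse step: showing that $\mathcal L_X\alpha=g\alpha$ forces $g=0$. Contracting with $X$ and using $\iota_X\mathcal L_X\alpha=\mathcal L_X(\iota_X\alpha)-\iota_{[X,X]}\alpha=X(1)=0$, I get $g\,\alpha(X)=g=0$. This is why the statement stresses that the last equivalence is for the normalized form: for a general defining form $f\alpha$ the Lie derivative is only proportional, not zero. This closes the chain of equivalences.
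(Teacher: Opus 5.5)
Your proposal is correct and matches the paper's proof. Cartan's formula with $\alpha(X)=1$ gives $\mathcal L_X\alpha=\iota_Xd\alpha$; injectivity of contraction by the nowhere-zero $X$ on two-forms on a surface gives the first equivalence; and evaluating $\mathcal L_X\alpha=g\alpha$ on $X$ forces $g=X(\alpha(X))=0$, which gives the second. Your local volume-form argument for injectivity and your flow argument that $\mathcal L_X\alpha=0$ implies invariance only fill in steps the paper leaves implicit.
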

\begin{proof}
Cartan's formula gives $\mathcal L_X\alpha=\iota_Xd\alpha$; since $X\ne0$, contraction with $X$ is injective on holomorphic two-forms on a surface. Invariance of the kernel means $\mathcal L_X\alpha=f\alpha$ locally. Evaluation on $X$ gives $f=(\mathcal L_X\alpha)(X)=X(\alpha(X))=0$.
\end{proof}

\section{Surface obstructions and universal covers}
In complex dimension two, invariance of a normalized form has an especially strong consequence for its closedness.

\begin{proposition}\label{torus:prop:surface}
Let $N$ be a connected Stein surface carrying a holomorphic $\C^*$-action without fixed points, and let $X$ be its infinitesimal generator, normalized by
$X(x)=\left.\frac{d}{dt}\right|_{t=0}(e^t\cdot x)$. There exists a holomorphic, closed one-form $\eta$ on $N$ satisfying
\[\eta(X)=1,\qquad (s\cdot)^*\eta=\eta\quad (s\in\C^*).\]
Every orbit circle $\gamma_x(\theta)=e^{i\theta}\cdot x$, $0\leq\theta\leq2\pi$, has period
\[\int_{\gamma_x}\eta=2\pi i.\]
Consequently its homology class has infinite order and $H^1(N,\C)\ne0$.
\end{proposition}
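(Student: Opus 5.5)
The plan is to build $\eta$ in three stages: a normalized form from Cartan's Theorem B, then circle averaging to get closedness, then a check that the averaged form is invariant under all of $\C^*$.

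\emph{Stage 1: a normalized form.} Since the action has no fixed points, $X$ is nowhere zero on the Stein manifold $N$. The first part of Proposition~\ref{prop:CartanB} then gives a global holomorphic one-form $\alpha$ with $\alpha(X)=1$. The argument is the usual one: contraction $\Omega^1_N\to\mathcal O_N$ is a surjection of locally free sheaves, its kernel is coherent, and Cartan's Theorem B lifts the constant section $1$.

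\emph{Stage 2: averaging over the circle.} Set
\[
\eta=\frac1{2\pi}\int_0^{2\pi}(e^{i\theta}\cdot)^*\alpha\,d\theta .
\]
This is a holomorphic one-form, being a continuous average of holomorphic forms with a smooth dependence on $\theta$. Because $\C^*$ is abelian, each map $e^{i\theta}\cdot$ pushes $X$ forward to $X$. Hence
\[
\bigl((e^{i\theta}\cdot)^*\alpha\bigr)(X)=\alpha(X)\circ(e^{i\theta}\cdot)=1,
\]
and so $\eta(X)=1$. By construction $\eta$ is $S^1$-invariant. Differentiating along the real direction $iX$ gives $\mathcal L_{iX}\eta=0$. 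Since $\eta$ is holomorphic, the Lie derivatives along $X$ and $iX$ are complex-linearly related, so $\mathcal L_X\eta=0$. Proposition~\ref{prop:invclosed} (equivalently, the argument of Lemma~\ref{lem:local-reflexive-average}) then yields $d\eta=0$: Cartan's formula gives $\iota_Xd\eta=0$, and contraction by a nonvanishing vector field is injective on two-forms on a surface.

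\emph{Stage 3: invariance under all of $\C^*$.} From $\mathcal L_X\eta=0$ we get $\frac{d}{dt}\varphi_t^*\eta=\varphi_t^*\mathcal L_X\eta=0$ for every complex time $t$. Every $s\in\C^*$ can be written as $e^t$, so $(s\cdot)^*\eta=\varphi_t^*\eta=\eta$.

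\emph{Periods and homology.} The loop $\gamma_x(\theta)=\varphi_{i\theta}(x)$ has velocity $iX$. Therefore
\[
\int_{\gamma_x}\eta=\int_0^{2\pi}\eta(iX)\,d\theta=2\pi i .
\]
Since $\eta$ is closed, integration factors through $H_1(N,\Z)$. A torsion class would have period $0$, so $[\gamma_x]$ has infinite order. The class $[\eta]\in H^1(N,\C)$ pairs nontrivially with it, so $H^1(N,\C)\neq0$.

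The main obstacle is the passage from real circle-invariance to $\mathcal L_X\eta=0$ for the holomorphic field. One has to be careful that $\mathcal L_{iX}$ acting on holomorphic forms equals $i\mathcal L_X$. I would verify this via Cartan's formula $\mathcal L_{iX}=d\iota_{iX}+\iota_{iX}d$ applied to $(1,0)$-forms, where contraction with $iX$ is $i$ times contraction with $X$. Alternatively, one can note that $\mathcal L_X\eta$ is holomorphic and that its real and imaginary flow derivatives agree by holomorphicity of the flow in $t$. A secondary point is checking that $X$ is nowhere zero, which holds precisely because the action has no fixed points.
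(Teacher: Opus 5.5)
Your proposal is correct and follows the paper's proof essentially step for step. It uses Cartan's Theorem B for a normalized form $\alpha$, averaging over $S^1$, the identity $\iota_Xd\eta=\mathcal L_X\eta-d(\eta(X))=0$ with injectivity of contraction on holomorphic two-forms on a surface, and the orbit integral $\int_{\gamma_x}\eta=2\pi i$; the only difference is ordering, since you derive $\mathcal L_X\eta=0$ from $\mathcal L_{iX}\eta=i\mathcal L_X\eta$ and then integrate the flow to get $\C^*$-invariance, whereas the paper first extends $S^1$-invariance to $\C^*$-invariance by holomorphicity in the group parameter.
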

\begin{proof}
Since $X$ has no zero, contraction is a surjective morphism of locally free coherent sheaves
\[\iota_X:\Omega_N^1\longrightarrow\Ocal_N.\]
Its kernel is coherent; Cartan's Theorem B yields $\alpha\in H^0(N,\Omega_N^1)$ with $\alpha(X)=1$. Average over $S^1$:
\[\eta=\frac1{2\pi}\int_0^{2\pi}(e^{i\theta}\cdot)^*\alpha\,d\theta.\]
Then $\eta(X)=1$ and $\eta$ is $S^1$-invariant. Holomorphicity in the group parameter implies invariance under $\C^*$, and hence $\mathcal L_X\eta=0$. Cartan's identity gives
\[\iota_Xd\eta=\mathcal L_X\eta-d(\eta(X))=0.\]
On a complex surface, contraction of a holomorphic two-form by a nonvanishing vector field is injective at each point. Thus $d\eta=0$. Finally, $\dot\gamma_x=iX(\gamma_x)$, so integration gives $2\pi i$. A closed one-form has zero period over a torsion homology class, proving the assertion.
\end{proof}

\begin{remark}[A fundamental group obstruction]
For a connected Stein surface $N$, the existence
of a fixed-point-free holomorphic $\C^*$-action
implies that the center of $\pi_1(N)$ contains
an element of infinite order. Indeed, the
orbit-circle class is central in $\pi_1(N)$
and has infinite order by the existence of
a closed normalized holomorphic one-form.
Consequently, if the center of $\pi_1(N)$
contains no element of infinite order,
every holomorphic $\C^*$-action on $N$
has a fixed point.
\end{remark}

\section{Uniformization of the universal cover}
A complete vector field with a closed normalized time form also constrains the geometry of the universal covering space.

\begin{theorem}[Uniformization from a closed holomorphic time form]
\label{thm:uniform-time}\label{torus:thm:uniform}
Let $N$ be a connected Stein surface and $X$ a nowhere-vanishing
complete holomorphic vector field. Suppose that there exists a closed
holomorphic one-form $\eta$ with $\eta(X)=1$. Then the universal cover
is biholomorphic to $\C\times\Sigma$, where $\Sigma$ is either $\C$
or $\D$. In these coordinates the lifted complete flow is translation,
\[
 \widetilde\varphi_t(z,w)=(z+t,w),
\]
and every deck transformation has the form
\[
 \gamma(z,w)=(z+c_\gamma,\rho_\gamma(w)),\qquad
 c_\gamma\in\C,\quad\rho_\gamma\in\operatorname{Aut}(\Sigma).
\]
If $X$ is the infinitesimal generator of a holomorphic $\C^*$-action, the
deck group moreover contains the nontrivial central translation
$(z,w)\mapsto(z+2\pi i,w)$, so $\pi_1(N)$ contains an infinite
cyclic central subgroup.
\end{theorem}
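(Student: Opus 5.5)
Pass to the universal cover $\pi:\widetilde N\to N$, which is a simply connected Stein surface, since a covering of a Stein manifold is Stein. Lift $X$ and $\eta$ to a complete field $\widetilde X$ and a closed form $\widetilde\eta$ with $\widetilde\eta(\widetilde X)=1$. Because $\widetilde N$ is simply connected, $\widetilde\eta=dT$ for a holomorphic $T:\widetilde N\to\C$, and $\widetilde X(T)=1$. Hence $T(\widetilde\varphi_t(q))=T(q)+t$, and $T$ is a surjective submersion.

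Set $\Sigma=T^{-1}(0)$. Exactly as in the proof of Corollary~\ref{cor:reflexive-fibration}, the maps $(q,t)\mapsto\widetilde\varphi_t(q)$ and $q\mapsto(\widetilde\varphi_{-T(q)}(q),T(q))$ are mutually inverse holomorphic maps, so $\widetilde N\simeq\Sigma\times\C$. Under this identification the flow is translation in the $\C$-factor, written as the first coordinate $z$. The fiber $\Sigma$ is a closed smooth complex curve in the Stein manifold $\widetilde N$. It is therefore a connected, noncompact, Stein Riemann surface, and it is connected because $\widetilde N$ is. It is simply connected because it is a deformation retract of $\widetilde N$ (contract the $\C$-factor). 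By the uniformization theorem, $\Sigma$ is $\C$ or $\D$, since $\mathbb P^1$ is compact and excluded.

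For deck transformations, each $\gamma$ satisfies $\gamma^*\widetilde\eta=\widetilde\eta$ and commutes with the flow $\widetilde\varphi_t$, because $\widetilde X$ is a lift. Consequently $d(T\circ\gamma-T)=0$, so $T\circ\gamma=T+c_\gamma$. Write $\gamma(z,w)=(z+c_\gamma,\rho_\gamma(z,w))$. Commutation with translations in $z$ forces $\rho_\gamma$ to be independent of $z$. Applying the same reasoning to $\gamma^{-1}$ shows that $\rho_\gamma$ is invertible, so $\rho_\gamma\in\Aut(\Sigma)$.

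In the $\C^*$ case, $\varphi_{2\pi i}=\mathrm{id}_N$. The lift $\widetilde\varphi_{2\pi i}$, which is translation by $2\pi i$, therefore covers the identity and is a deck transformation. It is nontrivial and commutes with every deck transformation of the form above, so it is central. It has infinite order, since its iterates are translations by $2\pi i k$; this is consistent with Proposition~\ref{torus:prop:surface}.

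The main obstacle is justifying that $\Sigma$ is a simply connected noncompact curve. This requires the product decomposition to be genuinely global, which in turn rests on the completeness of the lifted flow (lifts of complete flows are complete) and on the Stein property of the closed fiber.
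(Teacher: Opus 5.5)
Your proof is correct and follows the paper's argument essentially step for step: a global primitive on the universal cover, the flow-induced splitting $\widetilde N\simeq\C\times\Sigma$, uniformization of $\Sigma$, the analysis of deck transformations, and the $2\pi i$-translation. The only difference is how you exclude $\Sigma\simeq\mathbb P^1$. You invoke Stein's theorem that coverings of Stein manifolds are Stein; the paper avoids that theorem by noting that the covering map restricted to $\Sigma$ would give a nonconstant holomorphic map from a compact curve into the Stein manifold $N$.
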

\begin{proof}
Let $q:\widetilde N\to N$ be the universal covering map. Since $q^*\eta$ is holomorphic and closed, simple connectivity gives a holomorphic primitive $F:\widetilde N\to\C$ with $dF=q^*\eta$. The complete holomorphic flow $\varphi:\C\times N\to N$ generated by $X$ lifts, with identity initial condition, to a holomorphic $\C$-action $\widetilde\varphi$ on $\widetilde N$ because $\C$ is simply connected. Its infinitesimal generator $\widetilde X$ satisfies
\[F(\widetilde\varphi_t(x))=F(x)+t.\]
Thus $\Sigma=F^{-1}(0)$ is a smooth complex curve, and
\[\C\times\Sigma\longrightarrow\widetilde N,\quad (t,y)\longmapsto\widetilde\varphi_t(y)\]
is biholomorphic: its holomorphic inverse sends $x$ to $(F(x),\widetilde\varphi_{-F(x)}(x))$. The product and the simple connectivity of $\widetilde N$ show that $\Sigma$ is connected and simply connected. Hence uniformization gives $\Sigma\simeq\C,\D$, or $\mathbb P^1$. The last case is impossible: composing the inclusion of the compact curve $\mathbb P^1\hookrightarrow\widetilde N$ with $q$ would produce a nonconstant holomorphic map $\mathbb P^1\to N$, whereas holomorphic functions on a Stein manifold separate points and are constant on compact Riemann surfaces.

For any deck transformation $\gamma$, we have $d(F\circ\gamma-F)=0$, hence $F\circ\gamma=F+c_\gamma$. It commutes with the lifted additive action: both lift the same action and agree at $t=0$. The claimed product form follows. If the original flow has global period $2\pi i$, then $\widetilde\varphi_{2\pi i}$ is a deck transformation, and its $F$-increment is $2\pi i$, so it is nontrivial. It commutes with every deck transformation because it is part of the lifted flow.
\end{proof}

The preceding splitting has the following useful exact-time consequence.

\begin{corollary}[Exact-time splitting]\label{cor:exact-time-splitting}
Let $M$ be a connected complex manifold, $X$ a complete holomorphic
vector field, and $T\in\mathcal O(M)$ satisfy $X(T)=1$. Then the
hypersurface $L=T^{-1}(0)$ is connected and
\[
 \C\times L\longrightarrow M,\qquad (t,p)\longmapsto\varphi_t(p)
\]
is a biholomorphism. In these coordinates $X=\partial_t$.
In particular, if $M$ is Stein, $H^1(M,\C)=0$, and
$\mathcal T_X\ne\varnothing$, this splitting holds and every orbit
is biholomorphic to $\C$.
\end{corollary}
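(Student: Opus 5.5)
The plan is to repeat the product argument from the proof of Theorem~\ref{thm:uniform-time} and Corollary~\ref{cor:reflexive-fibration}, this time with a global primitive $T$ in place of a multivalued one. Completeness of $X$ gives a holomorphic flow $\varphi:\C\times M\to M$. Differentiating along orbits gives $\frac{d}{dt}T(\varphi_t(p))=(XT)(\varphi_t(p))=1$, so
\[
T(\varphi_t(p))=T(p)+t\qquad(t\in\C,\ p\in M).
\]
Since $dT(X)=1$, the map $T$ is a submersion. Hence $L=T^{-1}(0)$ is a smooth closed complex hypersurface; it may be empty a priori, but the orbit relation shows it meets every orbit.

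Define $\Psi:\C\times L\to M$ by $\Psi(t,p)=\varphi_t(p)$ and $\Theta:M\to\C\times L$ by $\Theta(x)=(T(x),\varphi_{-T(x)}(x))$. The orbit relation gives $T(\varphi_{-T(x)}(x))=0$, so $\Theta$ does land in $\C\times L$. It is holomorphic because $\varphi$ is holomorphic in both variables and $L$ is a submanifold. One then checks directly, using the group law of the flow, that $\Psi\circ\Theta=\mathrm{id}_M$ and $\Theta\circ\Psi=\mathrm{id}$. Under $\Psi$ the vector field $\partial_t$ is carried to $X$.

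Connectedness of $L$ follows because $M\simeq\C\times L$ is connected and $L$ is homeomorphic to the quotient of $\C\times L$ by projection; equivalently, $L$ is a continuous image of $M$ under the second component of $\Theta$. Each orbit $\{\varphi_t(p)\}$ is identified with $\C\times\{p\}$, so it is biholomorphic to $\C$ via $t$; in particular, no orbit is periodic.

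For the final assertion, pick $\omega\in\mathcal T_X$. Since $H^1(M,\C)=0$ and holomorphic de Rham theory holds on Stein manifolds (the argument of Proposition~\ref{prop:reflexive-de-rham} in the smooth case, via Theorem~B), the form is exact: $\omega=dT$ for some $T\in\mathcal O(M)$. Then $X(T)=\omega(X)=1$, and the first part applies. Steinness is used only in this step.

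The main obstacle, though a mild one, is making $\Theta$ genuinely holomorphic into $L$ and justifying the identities. This reduces to the flow identity $\varphi_s\circ\varphi_t=\varphi_{s+t}$ together with the orbit relation for $T$.
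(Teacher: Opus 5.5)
Your proposal is correct and follows the paper's proof essentially verbatim: the orbit relation $T(\varphi_t(p))=T(p)+t$, the explicit inverse $x\mapsto\bigl(T(x),\varphi_{-T(x)}(x)\bigr)$, connectedness of $L$ deduced from that of $M$, and exactness of a normalized form for the final clause. One small correction to your side remark: Steinness is not actually needed in the last step either, since $H^1(M,\C)=0$ already makes all periods of the closed holomorphic form $\omega$ vanish, and any primitive of a closed holomorphic one-form is holomorphic.
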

\begin{proof}
The equation $X(T)=1$ and completeness yield
$T(\varphi_t(p))=T(p)+t$. Thus the displayed map has holomorphic
inverse $q\mapsto\bigl(T(q),\varphi_{-T(q)}(q)\bigr)$.
Connectedness of $M$ forces connectedness of $L$. In the final
assertion, a normalized closed one-form has vanishing periods when
$H^1(M,\C)=0$, hence has a holomorphic primitive $T$.
\end{proof}

\subsection{Sharpness of the hypotheses}
The examples below indicate which assumptions in the uniformization statement cannot be omitted.

\begin{example}[Steinness cannot be omitted]\label{ex:nonstein}
The scalar action on $\C^2\setminus\{0\}$ is nonsingular, while $H^1(\C^2\setminus\{0\},\C)=0$ because this punctured space retracts to $S^3$. The manifold is not Stein; thus it does not contradict Proposition~\ref{torus:prop:surface}.
\end{example}
\begin{example}[The surface restriction is essential]\label{ex:threefold}
The free right action $A\mapsto A\operatorname{diag}(s,s^{-1})$ on the affine Stein threefold $SL_2(\C)$ has no fixed points, but $H^1(SL_2(\C),\C)=0$ since $SL_2(\C)$ retracts to $SU(2)\simeq S^3$. Therefore the degree-one obstruction is special to the surface or codimension-one torus settings.
\end{example}

\begin{remark}[Steinness matters]
The punctured space $\C^2\setminus\{0\}$ has $H^1(-,\C)=0$ (it retracts onto $S^3$) and has the free scalar $\C^*$-action $t\cdot(x,y)=(tx,ty)$, but it is not Stein. This shows that automatic normalization cannot omit Steinness.
For the uniformization statement itself, $N=\C\times\mathbb P^1$
with $X=\partial_t$ and $\eta=dt$ has all the hypotheses of
Theorem~\ref{torus:thm:uniform} except Steinness, and its universal
cover is $\C\times\mathbb P^1$.
\end{remark}
\begin{remark}[Dimension two matters]
The affine Stein threefold $SL_2(\C)$ has vanishing first complex cohomology. Yet the right action
\[
 A\longmapsto A\,\operatorname{diag}(t,t^{-1}),\qquad t\in\C^*,
\]
is free and holomorphic. Indeed $SL_2(\C)$ retracts onto $SU(2)\simeq S^3$.
\end{remark}

\section{Discrete periods, suspensions, and fiber bundles}
We next make explicit the suspension geometry associated with a closed normalized time form.

\begin{theorem}[Suspension principle]\label{stein:thm:suspension}
Let $M$ be connected and let $X$ be a complete holomorphic vector field admitting $\eta\in\T_X$. Suppose $\Lambda=\Per(\eta)$ is discrete. Then there exist a connected complex manifold $L$ and a homomorphism $\rho:\Lambda\to\Aut(L)$ such that
\[
 M\cong (L\times\C)/\Lambda,
 \qquad \lambda\cdot(p,t)=\big(\rho(\lambda)p,t+\lambda\big).
\]
The flow of $X$ is induced by translation in $t$, and $\eta$ is induced by $dt$. The period integral is a \emph{surjective}, locally holomorphically trivial holomorphic map
\[
 F:M\longrightarrow\C/\Lambda
\]
with \emph{connected} fibers, precisely the leaves of $\ker\eta$.
\end{theorem}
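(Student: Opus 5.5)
The plan is to run the smooth version of the argument in Corollary~\ref{cor:reflexive-fibration}, replacing primitives on $S$ by a primitive on a cyclic-type covering of $M$. Let $\Per_\eta:\pi_1(M)\to\C$ be the period homomorphism of $\eta$, with image $\Lambda$. Let $\widehat M\to M$ be the connected covering associated with $\ker\Per_\eta$; its deck group is canonically $\pi_1(M)/\ker\Per_\eta\cong\Lambda$. On $\widehat M$ the pullback $\hat\eta$ has trivial periods, so it has a holomorphic primitive $T:\widehat M\to\C$. Normalize $T$ so that a deck transformation $\lambda$ acts by $T\circ\lambda=T+\lambda$; this holds because $T\circ\lambda-T$ is constant and equals the period of a loop representing $\lambda$. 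Discreteness of $\Lambda$ is not needed yet.

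Next, the complete flow $\varphi$ lifts to a complete flow $\widehat\varphi$ on $\widehat M$. Since $\C$ is simply connected, the lift is taken with identity initial condition, and it commutes with deck transformations because both composites lift the same map and agree at $t=0$. From $\hat\eta(\widehat X)=1$ we get $T(\widehat\varphi_t(q))=T(q)+t$. Corollary~\ref{cor:exact-time-splitting}, applied on $\widehat M$, then gives a biholomorphism $\C\times L\to\widehat M$, $(t,p)\mapsto\widehat\varphi_t(p)$, where $L=T^{-1}(0)$ is connected. The inverse is $q\mapsto(T(q),\widehat\varphi_{-T(q)}(q))$.

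The main step is to transport the deck action. For $\lambda\in\Lambda$ and $p\in L$, the point $\lambda(p)$ satisfies $T=\lambda$, so $\widehat\varphi_{-\lambda}(\lambda(p))\in L$. Define $\rho(\lambda)(p):=\widehat\varphi_{-\lambda}(\lambda p)$. Commutation of the deck maps with the flow gives $\lambda(\widehat\varphi_t(p))=\widehat\varphi_{t+\lambda}(\rho(\lambda)p)$, which is the displayed action. It also gives $\rho(\lambda\mu)=\rho(\lambda)\rho(\mu)$, using that $\Lambda$ is abelian, and $\rho(\lambda)$ is holomorphic with inverse $\rho(-\lambda)$. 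Hence $M\cong\widehat M/\Lambda\cong(L\times\C)/\Lambda$. Under this identification the flow becomes $t$-translation and $\eta$ corresponds to $dt$.

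Finally, use discreteness. $T$ descends to $F:M\to\C/\Lambda$, and the projection $(L\times\C)/\Lambda\to\C/\Lambda$ is the associated bundle of the covering $\C\to\C/\Lambda$ with fiber $L$. Over a disc $D$ evenly covered by $\C\to\C/\Lambda$, choosing a sheet $\tilde D$ gives a trivialization $F^{-1}(D)\cong L\times\tilde D$. Surjectivity follows from $F(\varphi_t p)=F(p)+[t]$. Each fiber is a copy of $L$, hence connected. The fibers are closed submanifolds tangent to $\ker\eta$ of the right dimension, so each fiber is a union of leaves, and being connected it is a single leaf. I expect the only delicate points to be the deck-equivariant normalization of $T$ and the commutation of the lifted flow with deck transformations. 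Both are handled exactly as in Theorem~\ref{thm:uniform-time}.
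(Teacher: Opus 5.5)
Your proposal is correct and follows essentially the same route as the paper's proof: pass to the period covering $\widehat M$, take the primitive $T$ with $\widehat X(T)=1$, split $\widehat M\simeq L\times\C$ via the lifted complete flow, and write the deck transformations as $(p,t)\mapsto(\rho(\lambda)p,t+\lambda)$. You give more detail than the paper does, namely the explicit formula $\rho(\lambda)=\widehat\varphi_{-\lambda}\circ\lambda|_L$, the homomorphism check, the evenly-covered-disc trivialization, and the argument that fibers are leaves; the underlying argument is the same.
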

\begin{proof}
Take the connected regular covering $\pi:\widehat M\to M$ corresponding to the kernel of the period homomorphism $\pi_1(M)\to\Lambda$. The pulled-back form is exact, $\pi^*\eta=dT$. The lifted vector field $\widehat X$ is complete and satisfies $\widehat X(T)=1$. Hence $T(\widehat\Phi_t(p))=T(p)+t$. The fiber $L=T^{-1}(0)$ is a complex submanifold and
\[
 L\times\C\to\widehat M,\quad (p,t)\mapsto\widehat\Phi_t(p)
\]
is a biholomorphism, with inverse $q\mapsto(\widehat\Phi_{-T(q)}q,T(q))$. In particular $L$ is connected: the period covering $\widehat M$
is connected by construction and is biholomorphic to $L\times\C$.
Thus connectedness of the fibers follows from the chosen period covering,
not merely from connectedness of the original manifold. Every deck transformation commutes with $\widehat\Phi$ and changes $T$ by $\lambda\in\Lambda$, so it acts as $\big(\rho(\lambda)p,t+\lambda\big)$, with $\rho(\lambda)\in\Aut(L)$. The quotient defines the asserted bundle, and fibers identify with $L$.
\end{proof}
\begin{remark}
The covering/time-coordinate method is classical in nature. The result is retained here solely as the \emph{noncompact structural lemma} needed for the newly derived approximation and obstruction statements, not as a claim of original suspension theory. Without completeness, discrete periods give a submersion but do \emph{not} automatically give a bundle.
\end{remark}

We may now characterize closed leaves by the discreteness of the period group.

\begin{theorem}[Global stability of invariant closed-form foliations]
\label{thm:global-stability}
Let $M$ be a connected complex manifold and $X$ a complete
holomorphic vector field. Let $\omega$ be a closed holomorphic
one-form with $\omega(X)=1$, and put $\mathcal F=\ker\omega$ and
$\Lambda=\operatorname{Per}(\omega)$. The following assertions are
equivalent:
\begin{enumerate}[label=(\roman*),leftmargin=*]
\item $\mathcal F$ has at least one closed leaf;
\item all leaves of $\mathcal F$ are closed in $M$;
\item $\Lambda$ is a discrete subgroup of $\C$;
\item integration of $\omega$ induces a holomorphic fiber bundle
$F:M\to\C/\Lambda$ with connected fibers equal to the leaves of
$\mathcal F$ and $F^*(dz)=\omega$.
\end{enumerate}
Every leaf of $\mathcal F$ has trivial holonomy, independently of
whether these conditions hold. When they do, the base is $\C$,
$\C^*$, or an elliptic curve according as $\Lambda$ has rank
zero, one, or two. No Stein assumption is needed.
\end{theorem}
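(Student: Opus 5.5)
The natural route is to work on the period covering $\pi:\widehat M\to M$ associated with the kernel of the period homomorphism $\pi_1(M)\to\C$, exactly as in Theorem~\ref{stein:thm:suspension}, but without assuming discreteness at the outset. On $\widehat M$ we have $\pi^*\omega=dT$, and completeness of the lifted flow gives $T(\widehat\varphi_t(p))=T(p)+t$. The map $L\times\C\to\widehat M$, $(p,t)\mapsto\widehat\varphi_t(p)$ with $L=T^{-1}(0)$, is then a biholomorphism, and $L$ is connected. The leaves of $\pi^*\mathcal F$ are therefore exactly the slices $L\times\{c\}$. The deck group is identified with $\Lambda$, acting by $(p,t)\mapsto(\rho(\lambda)p,t+\lambda)$.

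Each leaf of $\mathcal F$ is the image of a slice $L\times\{c\}$, and the restriction of $\pi$ to that slice is injective. Indeed, two points of the slice with the same image differ by a deck transformation with $\lambda=0$, which is the identity. This injectivity gives trivial holonomy, since the leaf is a transversal-free copy of $L$ and the local first integral $T$ is globally defined along it. It also shows that the leaf through $\pi(p,c)$ meets a flow-box transversal $\{\varphi_s(x):|s|<\epsilon\}$ exactly in the times $s\in(c+\Lambda)-T(\tilde x)$ with $|s|<\epsilon$.

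The equivalences then follow. For (iii)$\Rightarrow$(iv), apply Theorem~\ref{stein:thm:suspension}: $F$ is $T$ modulo $\Lambda$, with $F^*(dz)=\omega$, and its fibers are connected leaves. (iv)$\Rightarrow$(ii) holds because fibers are closed, and (ii)$\Rightarrow$(i) is trivial. The main obstacle is (i)$\Rightarrow$(iii). Suppose a leaf $\ell$ is closed but $\Lambda$ is not discrete. Using the transversal description, $\ell$ meets a small transversal disc through one of its points in the set $\{s:|s|<\epsilon\}\cap\Lambda$, after translating. This set accumulates at $0$ when $\Lambda$ is nondiscrete, so $\ell$ accumulates on its own point in the transversal direction. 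A closed leaf, being a closed embedded submanifold of codimension one (it is properly embedded since it is closed and an injectively immersed leaf), meets a small transversal disc in finitely many points, which is a contradiction. The point I would take care over is the assertion that a closed leaf is properly embedded, so that its intersection with a compact transversal piece is discrete. I would argue this via a Baire category argument: the leaf is a countable union of plaques, and closedness forces some plaque to be open in the leaf topology relative to the induced topology, hence all plaques are. Alternatively, closedness of $\ell$ means $\pi^{-1}(\ell)=L\times(c+\Lambda)$ is closed in $L\times\C$, which forces $c+\Lambda$ to be closed in $\C$. A closed subgroup coset is discrete unless it contains a line, and a line is excluded because $\Lambda$ is countable, since $\pi_1(M)$ is countable. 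This second argument is cleaner, and I would use it.

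Finally, the base of the fibration in (iv) is $\C/\Lambda$, and a discrete subgroup of $\C$ has rank $0$, $1$, or $2$. The base is accordingly $\C$, $\C^*$ (via $\exp$ after rescaling), or an elliptic curve. No Stein hypothesis enters at any step.
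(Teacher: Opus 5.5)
Your proposal is correct and follows essentially the paper's route. You use the period covering $\widehat M\simeq L\times\C$, with deck transformations acting by $t\mapsto t+\lambda$ for $\lambda\in\Lambda$, and the suspension principle for (iii)$\Rightarrow$(iv). For (i)$\Rightarrow$(iii) you argue that a closed leaf forces $c+\Lambda$ to be closed, which is the paper's return-time set $\{t:\varphi_t(p)\in L\}=\Lambda$, and then use countability of $\pi_1(M)$ to exclude a real line. The only minor difference is your trivial-holonomy argument via injectivity of $\pi$ on slices; the paper instead observes that holonomy acts by translations in local primitive coordinates and fixes the base point.
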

\begin{proof}
Since $\omega$ is closed, its local holomorphic primitives give
transverse coordinates for $\mathcal F$ and differ on overlaps by
additive constants. Holonomy acts in such coordinates by translations;
as each holonomy germ fixes its basepoint, it is the identity.

The implications (iv)$\Rightarrow$(ii)$\Rightarrow$(i) are clear,
and (iii)$\Rightarrow$(iv) is the suspension principle,
Theorem~\ref{stein:thm:suspension}. To prove the remaining implication,
let $L$ be a closed leaf. The period-cover construction, used in the
proof of Theorem~\ref{stein:thm:suspension}, applies as an abstract
cover even when $\Lambda$ is not discrete as a subset of $\C$.
It gives, for every $p\in L$,
\[
 \{t\in\C:\varphi_t(p)\in L\}=\Lambda.
\]
The left-hand side is closed because $L$ is closed and the orbit map
is continuous; thus $\Lambda$ is closed in $\C$. As $M$ is
second-countable, its fundamental group is countable, so the image
$\Lambda$ of its period homomorphism is countable. A closed
countable subgroup of $\C$ is discrete: otherwise its identity
component or its accumulation at the origin would, by the
classification of closed subgroups of $\R^2$, force an uncountable
subgroup. This proves (i)$\Rightarrow$(iii).
\end{proof}
\begin{remark}[Relation to Reeb stability]
Theorem~\ref{thm:global-stability} is a global noncompact stability statement, rather than
an application of the compact-leaf hypothesis in classical Reeb
stability. Closed holomorphic forms impose trivial holonomy from the
outset, and completeness of the transverse field propagates the
existence of one closed leaf to a global fibration. The holomorphic transverse-action fibration theorem in \cite[Theorem~6]{BehagueScardua2008} contains
the geometric equivalence between a closed leaf and a holomorphic
fibration. The present formulation records the identification of its
transverse return subgroup with $\operatorname{Per}(\omega)$.
\end{remark}

Discrete periods do not replace completeness in the above result:
\begin{example}\label{ex:incomplete-period}
Let $M=\mathbb D\times\C$, $X=\partial_z$ for the disk coordinate $z$, and $\omega=dz$. Then $d\omega=0$, $\omega(X)=1$, and $\operatorname{Per}(\omega)=\{0\}$. The field is not complete on $M$, and integration gives the non-surjective submersion $M\to\C$, $(z,w)\mapsto z$. Hence the bundle conclusion of Theorem~\ref{stein:thm:suspension} requires completeness.
\end{example}
\section{Normalized holomorphic Tischler approximation for \texorpdfstring{$\C^*$}{C*-}actions}
We return to periodic multiplicative actions and derive the corresponding approximation theorem in the smooth setting.

Let $X$ generate a holomorphic $\C^*$-action $\psi$, with the complex-time flow $\Phi_t=\psi_{e^t}$. Let $\gamma_p$ be the orbit circle $\theta\mapsto\psi_{e^{i\theta}}(p)$, $0\le\theta\le2\pi$. Its homology class $\gamma\in H_1(M,\Z)$ does not depend on $p$ when $M$ is connected. (A nontrivial finite isotropy group is permitted as long as $X$ is nowhere zero.)

\begin{theorem}[Affine hyperplane of normalized classes]\label{stein:thm:hyperplane}
Let $M$ be a connected Stein manifold and let $X$ be a nowhere-vanishing complete holomorphic vector field with $\varphi_{2\pi i}=\operatorname{id}_M$ (equivalently, the infinitesimal generator of a fixed-point-free holomorphic $\C^*$-action). Let $\gamma\in H_1(M,\Z)$ be the common homology class of the orbit circles $\theta\mapsto e^{i\theta}\cdot p$. The cohomology classes of closed holomorphic forms $\eta$ with $\eta(X)=1$ are exactly
\[
 \boxed{\;\{\xi\in H^1(M,\C):\ \langle\xi,\gamma\rangle=2\pi i\}\;.}
\]
Every class in the displayed set is realized by a closed holomorphic $1$-form $\eta$ satisfying $\eta(X)=1$. The set is empty if the image of $\gamma$ in $H_1(M,\C)$ is zero; otherwise it is an affine hyperplane.
\end{theorem}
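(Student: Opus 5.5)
The argument has two inclusions plus a dimension count, and I would take them in that order.

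\emph{Necessity.} Let $\eta$ be closed and holomorphic with $\eta(X)=1$, and fix $p\in M$. Parametrize the orbit circle $\gamma_p(\theta)=\varphi_{i\theta}(p)$, so that $\dot\gamma_p=iX(\gamma_p)$. Then
\[
\int_{\gamma_p}\eta=\int_0^{2\pi}i\,\eta(X)\,d\theta=2\pi i.
\]
Since $\eta$ is closed, this integral depends only on $[\eta]$ and on $\gamma$. Hence $\langle[\eta],\gamma\rangle=2\pi i$. The class $\gamma$ is independent of $p$ because $M$ is connected and the orbit circles vary continuously with $p$. In particular, if $\gamma$ has zero image in $H_1(M,\C)$, no such $\eta$ exists and the set of classes is empty.

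\emph{Realization.} Let $\xi$ satisfy $\langle\xi,\gamma\rangle=2\pi i$.
\begin{enumerate}
\item By holomorphic de Rham theory on the Stein manifold $M$ (Proposition~\ref{prop:reflexive-de-rham} in the smooth case), choose a closed holomorphic form $\alpha$ with $[\alpha]=\xi$.
\item Average over the circle subgroup:
\[
\eta=\frac1{2\pi}\int_0^{2\pi}\varphi_{i\theta}^*\alpha\,d\theta .
\]
Each $\varphi_{i\theta}$ is homotopic to the identity through the maps $\varphi_{is}$, $0\le s\le\theta$. So each pullback is closed and cohomologous to $\alpha$, and hence $\eta$ is closed holomorphic with $[\eta]=\xi$; here one checks that a continuous average of cohomologous closed forms stays in the class, for instance by pairing with loops.
\item The form $\eta$ is invariant under the circle. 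Since the pullback $\varphi_s^*\eta$ depends holomorphically on $s$, invariance extends to all complex times, so $\mathcal L_X\eta=0$. Cartan's formula with $d\eta=0$ then gives $d(\eta(X))=0$, so $\eta(X)\equiv c$ is a constant on the connected manifold $M$.
\item Integrating over an orbit circle as in the necessity step gives $2\pi i\,c=\langle\xi,\gamma\rangle=2\pi i$, so $c=1$.
\end{enumerate}
Steps 1–4 show that every class in the displayed set is realized by a closed holomorphic form normalized by $X$.

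The main obstacle is step 3, the passage from circle invariance to $\mathcal L_X\eta=0$. I would argue it directly. The function $s\mapsto\varphi_s^*\eta$ is holomorphic in $s\in\C$ and periodic with period $2\pi i$. It is constant along the imaginary axis, so its derivative at $0$ in the imaginary direction vanishes. By holomorphicity this derivative equals $i\,\mathcal L_X\eta$, hence $\mathcal L_X\eta=0$.

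\emph{The affine hyperplane.} If $\gamma$ has nonzero image in $H_1(M,\C)$, then $\xi\mapsto\langle\xi,\gamma\rangle$ is a nonzero linear functional on $H^1(M,\C)$, by the universal coefficient theorem and the fact that $H^1(M,\C)=\operatorname{Hom}(H_1(M,\Z),\C)$. Its level set at $2\pi i$ is therefore an affine hyperplane.

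Note also that $\gamma$ maps to zero in $H_1(M,\C)$ exactly when it is torsion in $H_1(M,\Z)$. This is consistent with Lemma~\ref{lem:normalization-obstructions}.
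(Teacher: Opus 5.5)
Your proposal is correct and follows the paper's proof essentially step for step. Necessity comes from the orbit integral $\int_{\gamma_p}\eta=2\pi i$, and realization comes from a holomorphic de Rham representative averaged over the circle, with $\mathcal L_X\eta=0$ by holomorphic continuation and $\eta(X)=c=1$ fixed by the orbit period; your explicit derivative argument for $\mathcal L_X\eta=0$ and your universal-coefficient justification of the affine-hyperplane claim only fill in details the paper leaves implicit.
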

\begin{proof}
Necessity follows from $\int_{\gamma_p}\eta=\int_0^{2\pi}i\,\eta(X)\,d\theta=2\pi i$. Conversely, represent any such class by a global closed holomorphic $1$-form $\alpha$ (holomorphic de Rham theorem on Stein manifolds), and average over the compact circle:
\[
 \bar\alpha=\frac1{2\pi}\int_0^{2\pi}\psi_{e^{i\theta}}^*\alpha\,d\theta.
\]
The average is holomorphic, closed, and preserves the cohomology class because each circle element acts by a map homotopic to the identity. The averaged form is $S^1$-invariant, hence $\Lie_X\bar\alpha=0$ by holomorphic continuation. Cartan's formula gives $d(\bar\alpha(X))=0$, so $\bar\alpha(X)=c$ is constant. Since $2\pi i c=\int_\gamma\bar\alpha=2\pi i$, we have $c=1$.
\end{proof}

\begin{remark}
On Stein surfaces with a nowhere-vanishing infinitesimal generator
of a holomorphic $\C^*$-action, the affine hyperplane in
Theorem~\ref{stein:thm:hyperplane} is nonempty by
Corollary~\ref{stein:cor:surface}.
\end{remark}

The affine description of normalized classes leads to the smooth approximation theorem.

\begin{theorem}[Normalized holomorphic Tischler approximation]\label{stein:thm:Cstar}
Let $M$ be a connected Stein manifold, let $X$ be the nowhere-vanishing generator of a holomorphic $\C^*$-action, and assume that $H_1(M,\Z)$ is finitely generated. Every $\omega\in\T_X$ is a compact-open limit of closed holomorphic $\omega_j\in\T_X$ with $\Per(\omega_j)$ discrete. If $b_1(M)\ge2$, the approximations can have rank-two lattice period groups; hence they define bundles over elliptic curves. All resulting foliations are invariant under the \emph{same} complete holomorphic flow.
\end{theorem}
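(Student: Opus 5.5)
The argument reduces the statement to the affine description of normalized classes in Theorem~\ref{stein:thm:hyperplane}, plus a density argument for classes with discrete periods inside that hyperplane. It then converts back to forms by averaging, and reads off the geometry from the suspension principle. Fix $\omega\in\T_X$ and let $\gamma\in H_1(M,\Z)$ be the orbit-circle class. Since $\int_\gamma\omega=2\pi i\neq0$, the image of $\gamma$ in $H_1(M,\Q)$ is nonzero. By finite generation, choose classes $c_1,\dots,c_b\in H_1(M,\Z)$ projecting to a basis of $H_1(M,\Z)/\mathrm{torsion}$. Write $\gamma=\sum n_k c_k$ modulo torsion, with $n_k\in\Z$ not all zero, say $n_1\neq0$. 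A class $\xi\in H^1(M,\C)=\operatorname{Hom}(H_1(M,\Z),\C)$ is determined by its values $x_k=\langle\xi,c_k\rangle$, since torsion pairs to zero. The normalized hyperplane of Theorem~\ref{stein:thm:hyperplane} is therefore $\{\sum n_kx_k=2\pi i\}$.

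\textbf{Density step.} Write $x_k=2\pi i\,y_k$ and approximate $y_2,\dots,y_b$ by elements of $\Q+i\Q$. Solve for $y_1=(1-\sum_{k\ge2}n_ky_k)/n_1$, which is again in $\Q+i\Q$ and converges to the original value. This produces classes $\xi_j\to[\omega]$ on the hyperplane whose periods lie in $\tfrac{2\pi i}{N_j}\Z[i]$ for a common denominator $N_j$. That set is a lattice, so $\Per(\xi_j)$ is discrete.

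\textbf{Rank-two refinement.} When $b\ge2$, I want two periods that are $\R$-linearly independent. The period group contains $2\pi i$. I perturb some $y_k$ with $k\ge2$ so that it has a nonzero imaginary part; I can always arrange this, because the imaginary part is an arbitrary small rational adjustment. Then $x_k=2\pi i\,y_k\notin\R\cdot 2\pi i$, and the discrete group has rank two.

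\textbf{Realizing classes by forms.} Fix closed holomorphic forms $\beta_1,\dots,\beta_b$ representing the dual basis; these exist by the holomorphic de Rham theorem on Stein $M$. Set $\alpha_j=\omega+\sum_k(\langle\xi_j-[\omega],c_k\rangle)\beta_k$. Then $[\alpha_j]=\xi_j$, and $\alpha_j\to\omega$ uniformly on compacts because the coefficients tend to $0$. Next average over the circle, $\omega_j=\frac1{2\pi}\int_0^{2\pi}\psi_{e^{i\theta}}^*\alpha_j\,d\theta$. Exactly as in the proof of Theorem~\ref{stein:thm:hyperplane}, $\omega_j$ is closed, has class $\xi_j$, and satisfies $\omega_j(X)=1$. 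Moreover, $\omega$ is $S^1$-invariant, since $\Lie_X\omega=d(\omega(X))+\iota_Xd\omega=0$. Hence $\omega_j-\omega$ is the average of $\alpha_j-\omega$. Uniform smallness on an $S^1$-saturated compact set is preserved by averaging, and every compact set lies in such a saturated compact set $S^1\cdot K$, so $\omega_j\to\omega$ on compacts. The period group of $\omega_j$ equals that of $\xi_j$, so it is discrete, and a lattice when $b\ge2$. Finally, Theorem~\ref{stein:thm:suspension} (equivalently Theorem~\ref{thm:global-stability}) gives the bundles $M\to\C/\Per(\omega_j)$. The base is an elliptic curve in the rank-two case. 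All the foliations are invariant under the same flow because $\omega_j(X)=1$ for the fixed $X$.

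\textbf{Main obstacle.} I expect the main care to be needed in the bookkeeping that keeps the normalization exact while approximating. The constraint is one rational linear equation with a nonzero integer coefficient, so solving for one coordinate must preserve both rationality and convergence. The other point to check is that compact-open convergence survives circle averaging, which the saturation argument above handles.
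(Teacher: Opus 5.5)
Your proposal is correct and follows essentially the same route as the paper. You impose the normalization constraint on $H_1(M,\Z)/\mathrm{torsion}$, approximate the free coordinates by points of $2\pi i(\Q+i\Q)$ (with a nonreal adjustment of a second coordinate when $b_1\ge2$), realize the classes by closed holomorphic representatives, average over $S^1$ to make the normalization exact, and invoke the suspension theorem. The only cosmetic difference is that you solve $\sum_k n_k x_k=2\pi i$ for a coordinate with $n_1\neq0$, whereas the paper passes to a Smith-normal-form basis with $[\gamma]=d\,e_1$, so that the constraint simply fixes $\xi(e_1)=2\pi i/d$.
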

\begin{proof}
Fix $\omega\in\T_X$ and put $H_1(M,\Z)/\mathrm{tors}\simeq\Z^r$. As $\int_\gamma\omega=2\pi i$, the orbit class $\gamma$ has infinite order, so $r\ge1$. By Smith normal form, choose an integral basis $e_1,\ldots,e_r$ with $[\gamma]=d e_1$ modulo torsion, where $d\ge1$. Under the period description of $H^1(M,\C)$, the normalization hyperplane of Theorem~\ref{stein:thm:hyperplane} is exactly
\[
 \xi(e_1)=2\pi i/d,\qquad \xi(e_j)\in\C\quad (2\le j\le r).
\]
Approximate each of the remaining $r-1$ coordinates of $[\omega]$ by elements of $2\pi i(\Q+i\Q)$, keeping $\xi(e_1)=2\pi i/d$ fixed. Call the resulting characters $\xi_n$. The period values of any given $\xi_n$ lie in $\frac{2\pi i}{N_n}\Z[i]$ for a suitable integer $N_n\ge1$; hence their image is a discrete subgroup of $\C$. If $r\ge2$, we can arrange, however close the approximation, that $\xi_n(e_2)$ has nonzero real part. Together with the fixed nonzero purely imaginary value of $\xi_n(e_1)$, the image then has real span of dimension two and is a rank-two lattice. If $r=1$, normalization forces the single free period value; it is already discrete.

Choose closed global holomorphic forms $\theta_1,\ldots,\theta_r$ representing a cohomology basis, as permitted by holomorphic de Rham theory on Stein manifolds. Because $\xi_n-[\omega]\to0$, suitable linear combinations of the $\theta_j$ provide closed forms $\alpha_n\to\omega$ uniformly on compact subsets with $[\alpha_n]=\xi_n$. Now average $\alpha_n$ over $S^1$:
\[
 \omega_n=\frac1{2\pi}\int_0^{2\pi}\psi_{e^{i\vartheta}}^*\alpha_n\,d\vartheta.
\]
The proof of Theorem~\ref{stein:thm:hyperplane} shows that $\omega_n(X)=1$ exactly and that $[\omega_n]=\xi_n$, so its periods are discrete (and a rank-two lattice if $r\ge2$). Since $\omega$ is invariant and the saturation of any compact set under $S^1$ is compact, the averaging preserves compact-open convergence to $\omega$. Theorem~\ref{stein:thm:suspension} gives the asserted holomorphic fibrations.
\end{proof}

The first Betti number sharpens the conclusion in three natural cases.

\begin{corollary}[The first-Betti-number trichotomy]\label{cor:tischler-betti}
Let $M$ be a connected Stein manifold endowed with a holomorphic
$\C^*$-action whose infinitesimal generator $X$ is nowhere zero, and
suppose that $H_1(M,\Z)$ is finitely generated. Then:
\begin{enumerate}[label=\textup{(\roman*)},leftmargin=*]
\item If $b_1(M)=0$, then $\T_X=\varnothing$.
\item If $b_1(M)=1$ and $\T_X\ne\varnothing$, every $\omega\in\T_X$
 already has a nonzero cyclic discrete period group. Integration of $\omega$
 gives a surjective, locally holomorphically trivial holomorphic map
 $F_\omega:M\to\C^*$ with connected fibers equal to the leaves of
 $\ker\omega$; no approximation is required.
\item If $b_1(M)\ge2$ and $\T_X\ne\varnothing$, every $\omega\in\T_X$
 is a compact-open limit of normalized closed holomorphic forms whose
 periods are rank-two lattices; the associated leaf foliations are
 holomorphic fiber bundles over elliptic curves.
\end{enumerate}
In particular, the first case cannot occur for a nowhere-vanishing
$\C^*$-generator on a Stein \emph{surface}, because such generators have
$\T_X\ne\varnothing$ by Corollary~\ref{stein:cor:surface}.
\end{corollary}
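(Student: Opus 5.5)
The three cases all rest on Theorem~\ref{stein:thm:hyperplane}: whenever $\T_X\ne\varnothing$, the classes $[\omega]$ with $\omega\in\T_X$ fill the affine hyperplane $\{\xi:\langle\xi,\gamma\rangle=2\pi i\}$, where $\gamma$ is the orbit-circle class. Each case then uses a different earlier result.

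\emph{Case (i).} Suppose $b_1(M)=0$. Since $H_1(M,\Z)$ is finitely generated, it is then a finite group, so $\gamma$ is torsion. Its image in $H_1(M,\C)$ is zero, and the hyperplane of Theorem~\ref{stein:thm:hyperplane} is empty. Concretely, any $\omega\in\T_X$ would give $\int_{\gamma_p}\omega=2\pi i$, while a closed form integrates to zero over a torsion class.

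\emph{Case (ii).} Suppose $b_1(M)=1$ and $\omega\in\T_X$. Write $H_1(M,\Z)/\mathrm{tors}=\Z e_1$ with $\gamma=d e_1$ modulo torsion, where $d\ge1$; we have $d\ne0$ because $\int_\gamma\omega=2\pi i$. Torsion classes have zero period. Hence
\[
\Per(\omega)=\Z\,\omega(e_1)=\tfrac{2\pi i}{d}\Z,
\]
which is nonzero, cyclic and discrete. The field $X$ is complete, being a $\C^*$-generator, so Theorem~\ref{stein:thm:suspension} (equivalently (iii)$\Rightarrow$(iv) of Theorem~\ref{thm:global-stability}) gives a surjective, locally trivial holomorphic map $F_\omega:M\to\C/\tfrac{2\pi i}{d}\Z$ with connected fibers equal to the leaves of $\ker\omega$. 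Composing with $z\mapsto e^{dz}$ identifies the base with $\C^*$.

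\emph{Case (iii).} This is Theorem~\ref{stein:thm:Cstar} in the case $r=b_1(M)\ge2$. It produces $\omega_j\in\T_X$ with $\omega_j\to\omega$ in the compact-open topology and with rank-two lattice period groups $\Lambda_j$. Theorem~\ref{stein:thm:suspension} then yields bundles $M\to\C/\Lambda_j$, and each base is an elliptic curve.

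\emph{Final remark on surfaces.} Proposition~\ref{torus:prop:surface} shows that a fixed-point-free $\C^*$-action on a Stein surface always admits a closed normalized form, so $\T_X\ne\varnothing$. Its proof gives $\int\eta=2\pi i$ on orbit circles, so $\gamma$ is non-torsion and $b_1\ge1$. This excludes case (i), and Corollary~\ref{stein:cor:surface} can be cited for the same conclusion.

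There is no serious obstacle. The only point needing care is in (ii): the period group must be computed correctly with torsion present. Torsion classes contribute nothing, so the image is generated by the single value $\omega(e_1)$, which the normalization fixes at $2\pi i/d$.
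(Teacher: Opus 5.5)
Your proof is correct and follows the paper's argument essentially step for step: the torsion obstruction for (i), the computation $\Per(\omega)=\tfrac{2\pi i}{d}\Z$ from $[\gamma]=d e_1$ modulo torsion followed by the suspension theorem for (ii), a direct appeal to Theorem~\ref{stein:thm:Cstar} for (iii), and Corollary~\ref{stein:cor:surface} for the surface remark. Your explicit identification $\C/\tfrac{2\pi i}{d}\Z\simeq\C^*$ via $z\mapsto e^{dz}$ and your note that torsion classes contribute no periods are small clarifications that the paper leaves implicit.
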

\begin{proof}
Fix $p\in M$ and consider the orbit circle
$\gamma(\theta)=e^{i\theta}\cdot p$. If $\omega\in\T_X$ then
$\int_\gamma\omega=2\pi i$, whence $[\gamma]$ is nontorsion and
$b_1(M)\ge1$. This proves (i).

Suppose $b_1(M)=1$. Because the first integral homology is finitely
generated, its free part is $\Z e$. Write $[\gamma]=d e$ modulo
torsion, with a nonzero integer $d$ (choose the sign of $e$ so that
$d>0$). Normalization forces
\[
 \int_e\omega=\frac{2\pi i}{d},\qquad
 \Per(\omega)=\frac{2\pi i}{d}\Z.
\]
The complete-flow suspension Theorem~\ref{stein:thm:suspension}
therefore gives a holomorphic bundle over
$\C/((2\pi i/d)\Z)\simeq\C^*$ with connected fibers.
This proves (ii). Assertion (iii) is Theorem~\ref{stein:thm:Cstar}.
The final assertion follows from Corollary~\ref{stein:cor:surface}.
\end{proof}

\begin{proof}[Proof of Corollary~\ref{thm:intro:a}]
The three cases and their period assertions follow from
Corollary~\ref{cor:tischler-betti}. When $b_1(M)=1$,
the associated discrete cyclic periods yield a bundle over
$\C^*$ by Theorem~\ref{stein:thm:suspension}; when
$b_1(M)\ge2$, Theorem~\ref{stein:thm:Cstar} provides rank-two
lattice approximants whose foliations are bundles over elliptic
curves, again by the suspension theorem. Finally,
Theorem~\ref{stein:thm:hyperplane} identifies the cohomology
classes of normalized forms with the stated affine hyperplane.
\end{proof}

In complex dimension two, a fixed-point-free multiplicative action also supplies the required closed time form.

\begin{corollary}[Automatic normalization on Stein surfaces]\label{stein:cor:surface}
If $M$ is a Stein surface and $X$ is a nowhere-vanishing generator of a holomorphic $\C^*$-action, there exists $\omega\in\T_X$. If $H_1(M,\Z)$ is finitely generated, then $M$ admits a holomorphic fiber bundle over $\C^*$ or an elliptic curve whose fibers are connected, Stein, and transverse to $X$; when $b_1(M)\ge2$, the base can be elliptic.
\end{corollary}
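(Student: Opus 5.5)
The plan is to assemble Corollary~\ref{stein:cor:surface} from three results already in the paper. Existence comes from Proposition~\ref{torus:prop:surface}, the bundle structure from the suspension principle, and the elliptic base from Theorem~\ref{stein:thm:Cstar}.

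\textbf{Existence.} Since $X$ is nowhere vanishing, the $\C^*$-action has no fixed points. Proposition~\ref{torus:prop:surface} then produces a closed holomorphic $\eta$ with $\eta(X)=1$, so $\eta\in\T_X$ and the first assertion holds. Explicitly, Cartan's Theorem B lifts $1$ through the surjective contraction $\iota_X:\Omega^1_M\to\Ocal_M$. Averaging over $S^1$ gives an invariant normalized form. Closedness then follows because $\iota_X d\eta=0$ and contraction by a nonzero vector is injective on two-forms in complex dimension two.

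\textbf{Bundle structure.} Now assume $H_1(M,\Z)$ is finitely generated. The orbit-circle period $2\pi i$ shows that the orbit class is nontorsion, so $b_1(M)\ge1$.
\begin{enumerate}
\item If $b_1(M)=1$, Corollary~\ref{cor:tischler-betti}(ii) says $\Per(\eta)=\frac{2\pi i}{d}\Z$ is already discrete. Theorem~\ref{stein:thm:suspension} then gives a surjective, locally trivial holomorphic map $F:M\to\C/\Per(\eta)\simeq\C^*$ with connected fibers, namely the leaves of $\ker\eta$.
\item If $b_1(M)\ge2$, Theorem~\ref{stein:thm:Cstar} replaces $\eta$ by a normalized closed form $\omega_j\in\T_X$ whose periods form a rank-two lattice $\Lambda$. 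The suspension principle then gives a bundle over the elliptic curve $\C/\Lambda$. This is the "base can be elliptic" clause.
\end{enumerate}
In either case the base is $\C/\Lambda$ with $2\pi i\in\Lambda$, hence $\C^*$ or an elliptic curve.

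\textbf{Fiber properties.} Each fiber is a level set of a local primitive $f$ with $X(f)=1$. It is therefore a closed complex submanifold of $M$, which is Stein as a closed submanifold of a Stein manifold, and it is transverse to $X$ because $\omega(X)=1\ne0$. Connectedness is part of the conclusion of Theorem~\ref{stein:thm:suspension}: the period cover is connected and splits as $L\times\C$.

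\textbf{Main obstacle.} The one step needing care is that the suspension principle requires $X$ to be complete. This holds because $X$ generates a $\C^*$-action, whose flow $\varphi_t=\Phi(e^t,\cdot)$ is defined for all $t\in\C$. Beyond this check, the argument is bookkeeping of the cited results.
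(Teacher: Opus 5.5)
Your proposal is correct and follows the paper's own proof. The existence step is the same: the Cartan~B lift is averaged over $S^1$ and becomes closed by injectivity of $\iota_X$ on two-forms in dimension two, which is exactly Proposition~\ref{torus:prop:surface}. The bundle then comes from Theorem~\ref{stein:thm:Cstar} plus the suspension principle, and the fibers are Stein as closed submanifolds. Your separate treatment of $b_1(M)=1$ through Corollary~\ref{cor:tischler-betti}(ii) is not circular, since only the final sentence of that corollary relies on the present statement; the paper simply lets Theorem~\ref{stein:thm:Cstar} cover both cases at once.
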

\begin{proof}
Contraction with $X$ defines a surjection of locally free sheaves $\Omega_M^1\to\OO_M$. Cartan B lifts $1$ to a global holomorphic form $\alpha$ with $\alpha(X)=1$. Average $\alpha$ over $S^1$ to obtain invariant $\bar\alpha$ with $\bar\alpha(X)=1$. Then $\ii_Xd\bar\alpha=\Lie_X\bar\alpha-d(1)=0$. On a complex surface, contraction of a holomorphic $2$-form by a nowhere-zero vector field is injective, so $d\bar\alpha=0$. Apply Theorems~\ref{stein:thm:Cstar} and~\ref{stein:thm:suspension}. A closed complex submanifold of Stein $M$ is Stein.
\end{proof}

\begin{proof}[Proof of Theorem~\ref{thm:intro:b}]
By Proposition~\ref{torus:prop:surface}, the fixed-point-free
holomorphic $\C^*$-action on the Stein surface admits a closed
holomorphic one-form $\omega$ normalized by $X$, and its orbit
circle has nonzero period. Thus $H^1(M,\C)\ne0$ and
$b_1(M)\ge1$. Theorem~\ref{thm:uniform-time} proves the claimed
universal-cover splitting and translation form of the lifted flow.
If $H_1(M,\Z)$ is finitely generated, apply
Corollary~\ref{cor:tischler-betti}: for $b_1(M)=1$ the normalized
form already has cyclic discrete periods, while for $b_1(M)\ge2$
we choose one of the rank-two lattice approximants.
Corollary~\ref{cor:reflexive-fibration} (or, equivalently,
Theorem~\ref{stein:thm:suspension} in the smooth case) gives
holomorphic locally trivial bundles with connected Stein fibers
and the respective bases $\C^*$ and an elliptic curve.
As $M$ is a surface, each fiber has complex dimension one.
\end{proof}

\begin{remark}
The averaging proof is independent of the much deeper zero-free form approximation theorem of Majcen \cite{Majcen2007}. It is the \emph{fixed normalization} $\omega_j(X)=1$ that forces nowhere-vanishing, not an independent noncritical-form construction.
\end{remark}

\Needspace{13\baselineskip}
\begin{example}[Betti-number $b_1=0$: a free action with no closed time form]\label{ex:betti-zero}
Let $M=SL_2(\C)$, the affine Stein threefold
$\{A\in\operatorname{Mat}_{2\times2}(\C):\det A=1\}$, and let
\[
 s\cdot A=A\begin{pmatrix}s&0\\0&s^{-1}\end{pmatrix},
 \qquad s\in\C^*.
\]
This action is free, and its nowhere-zero generator is
$X(A)=AH$, where $H=\operatorname{diag}(1,-1)$.
The manifold retracts onto $SU(2)\simeq S^3$, hence $b_1(M)=0$.
Consequently $\T_X=\varnothing$ by
Corollary~\ref{cor:tischler-betti}: in fact, if a closed normalized
form $\eta$ existed, its integral along the orbit circle would be
$2\pi i$ even though $H^1(M,\C)=0$.

The obstruction is specifically to \emph{closed} normalization:
letting $\vartheta=A^{-1}dA$ be the holomorphic Maurer--Cartan form,
\[
 \alpha=\tfrac12\operatorname{tr}(H\vartheta)
\]
is a global holomorphic form with $\alpha(X)=1$. It is not closed.
Indeed, if $E=\left(\begin{smallmatrix}0&1\\0&0\end{smallmatrix}\right)$
and $F=\left(\begin{smallmatrix}0&0\\1&0\end{smallmatrix}\right)$,
then $[E,F]=H$ and the Maurer--Cartan formula at the identity gives
$d\alpha(E,F)=-\alpha(H)=-1$.
Thus the $b_1=0$ case is genuinely realized by a nonsingular
$\C^*$-action on a Stein manifold, but not by a Stein surface.
\end{example}

\begin{example}[First Betti number one: an existing fibration]\label{ex:betti-one}
Take $M=\C^*\times\C$ with coordinates $(z,w)$ and the free action
$s\cdot(z,w)=(sz,w)$. Then $b_1(M)=1$ and
\[
 X=z\partial_z,\qquad \omega=\frac{dz}{z},\qquad
 \Per(\omega)=2\pi i\Z.
\]
The form is holomorphic, closed, and normalized by $X$.
The associated holomorphic fibration is already the projection
\[
 F:M\longrightarrow\C^*,\qquad F(z,w)=z,
\]
whose connected fibers are the copies $\{z\}\times\C$, precisely
 the leaves of $\ker\omega$. No perturbation is necessary.
\end{example}

\begin{example}[First Betti number two: elliptic approximation of a nonclosed foliation]\label{ex:tischler-explicit}
On $M=(\C^*)^2$, where $b_1(M)=2$, take the free action
$s\cdot(z,w)=(sz,w)$, whose generator is $X=z\partial_z$, and put
\[
 \omega=\frac{dz}{z}+\sqrt2\frac{dw}{w},\qquad
 \omega_n=\frac{dz}{z}+\left(\sqrt2+\frac{i}{n}\right)\frac{dw}{w}\quad(n\ge1).
\]
All these forms are closed and normalized by $X$, and $\omega_n\to\omega$ compact-open. The initial period group $2\pi i(\Z+\sqrt2\Z)$ is dense in $i\R$, whereas the periods of $\omega_n$ form the rank-two lattice
\[
 2\pi i\Z+2\pi i\left(\sqrt2+\frac{i}{n}\right)\Z.
\]
Thus every $\omega_n$ defines a holomorphic bundle over an elliptic curve with fibers transverse to the same complete field $X$, while the original invariant foliation has nonclosed leaves.
\end{example}
\section{The closures of invariant holomorphic leaves}
Period groups also govern the topology of leaf closures for invariant holomorphic foliations.

\begin{theorem}[Leaf-closure classification]\label{stein:thm:closures}
Let $M$ be connected, let $X$ be complete, and let $\omega\in\T_X$. Write $\Lambda=\Per(\omega)$ and $H=\overline\Lambda\subset\C$. There is a continuous surjection
\[
 Q:M\to\C/H
\]
such that, for every leaf $L$ of $\ker\omega$,
\[
 \boxed{\overline L=Q^{-1}(Q(L)).}
\]
Exactly one of these alternatives holds:
\begin{enumerate}[label=(\roman*),leftmargin=*]
\item $H$ is discrete: every leaf is closed, and there is a holomorphic fiber bundle $M\to\C/H$ whose connected fibers are leaves.
\item $H$ is proper and nondiscrete: every leaf is dense in a proper, closed, real-analytic Levi-flat hypersurface. The hypersurfaces are fibers of a real-analytic submersion onto $\R$ or $S^1$.
\item $H=\C$: every leaf is dense in $M$.
\end{enumerate}
\end{theorem}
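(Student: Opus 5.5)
The plan is to work on the period covering, exactly as in the proof of Theorem~\ref{stein:thm:suspension}, but without assuming that $\Lambda$ is discrete. Let $\pi:\widehat M\to M$ be the connected regular covering associated with the kernel of the period homomorphism $\pi_1(M)\to\Lambda$. On it $\pi^*\omega=dT$ for a holomorphic $T:\widehat M\to\C$, and the lifted complete flow gives $\widehat M\cong L_0\times\C$ with $T$ the second coordinate. Deck transformations act by $(p,t)\mapsto(\rho(\lambda)p,t+\lambda)$ for $\lambda\in\Lambda$. Define $Q:M\to\C/H$ by $Q(\pi(\hat q))=[T(\hat q)]$. This is well defined because deck transformations shift $T$ by elements of $\Lambda\subset H$. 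It is continuous and surjective since $T$ is, and because the flow satisfies $Q(\varphi_t(p))=Q(p)+[t]$.

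Next I identify the leaves. A leaf $L$ of $\ker\omega$ is the image under $\pi$ of a level set $T^{-1}(c)=L_0\times\{c\}$, which is connected. Hence
\[
L=\pi(L_0\times\{c\}),\qquad \pi^{-1}(L)=\bigcup_{\lambda\in\Lambda}L_0\times\{c+\lambda\}.
\]
Equivalently, $L=\{\varphi_s(x):x\in L,\ s\in\Lambda\}$ shifted appropriately. More usefully, for a fixed $p\in L$, the set $\{t:\varphi_t(p)\in L\}$ equals $\Lambda$, as in the proof of Theorem~\ref{thm:global-stability}.

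For the closure identity $\overline L=Q^{-1}(Q(L))$ I argue two inclusions. The inclusion $\subseteq$ holds because $Q^{-1}(Q(L))=Q^{-1}([c])$ is closed and contains $L$. For $\supseteq$, take $x$ with $Q(x)=[c]$. Lift $x$ to $(p,t)\in L_0\times\C$ with $t\in c+H$. Choose $\lambda_n\in\Lambda$ with $c+\lambda_n\to t$. Then the points $(p,c+\lambda_n)$ lie in $\pi^{-1}(L)$ and converge to $(p,t)$, so their images in $L$ converge to $x$. This is the key step, and I expect it to be the main obstacle, though only in care: one must use that the product structure is global, so the first coordinate $p$ can be held fixed while only the time coordinate varies.

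The trichotomy then follows from the classification of closed subgroups of $\C\cong\R^2$: $H$ is discrete, or $H\cong\R\oplus(\text{discrete})$ is a proper nondiscrete subgroup, or $H=\C$. In case (i), $H=\Lambda$ and Theorem~\ref{thm:global-stability} applies. In case (iii), $\C/H$ is a point, so every leaf is dense. In case (ii), write $H=\R v\oplus D$ with $D$ discrete of rank at most one. Choose a real linear functional $\ell$ vanishing on $v$. Then $\C/H\cong\R$ or $S^1$ via $\ell$, and $Q$ becomes $\ell\circ T$ modulo $\ell(D)$. This map is real-analytic, since it is the real part of a holomorphic primitive up to a constant rotation. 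It is a submersion, because $d(\ell\circ T)(X)\neq 0$ for suitable real time directions. Its fibers are therefore real-analytic hypersurfaces, and they are Levi-flat because they are locally the preimages of real lines under the holomorphic submersion $T$, hence foliated by the complex leaves. They are proper, since $Q$ is surjective onto a space with more than one point.

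Finally, exactly one alternative holds, since the three cases are mutually exclusive conditions on $H$.
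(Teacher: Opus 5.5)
Your proposal is correct and follows essentially the paper's proof. You use the period covering $\widehat M\cong L_0\times\C$ with deck transformations $(p,t)\mapsto(\rho(\lambda)p,t+\lambda)$, the map $Q=T \bmod H$, the closure identity obtained by approximating an element of $H$ by periods along the flow (your fixed-first-coordinate argument is exactly the paper's step $\Phi_{\lambda_n}(p)\to\Phi_h(p)$), and the classification of closed subgroups of $\C$ for the trichotomy; your real functional $\ell$ vanishing on the line in $H$ is just a more explicit version of the paper's remark that $Q$ is locally the real part of a complex-linear multiple of a holomorphic time coordinate.
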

\begin{proof}
On the period covering, use the splitting $\widehat M\simeq L_0\times\C$ from the proof of Theorem~\ref{stein:thm:suspension}, which does not require the period image to be discrete as a subset of $\C$: the deck group is an abstract quotient of $\pi_1(M)$ acting properly on $\widehat M$, and acts by translation by $\lambda\in\Lambda$ on the time coordinate. Thus $Q([p,t])=t\pmod H$ is well defined, continuous, and surjective.

For a fixed leaf $L$ of the foliation, $\lambda\in\Lambda$ precisely when $\Phi_\lambda(L)=L$ (use the product cover). If $h\in H$, take $\lambda_n\in\Lambda$ with $\lambda_n\to h$; then $\Phi_{\lambda_n}(p)\in L$ tends to $\Phi_h(p)$ for each $p\in L$. The flow saturates $M$ by translates of $L$, and therefore the entire fiber $Q^{-1}(Q(L))$ lies in $\overline L$. Conversely $Q$ is constant on $L$ and has closed point fibers since $H$ is closed, giving the reverse inclusion.

A proper nondiscrete closed subgroup of the real plane $\C$ is either a real line or a real line plus a transverse infinite cyclic subgroup. Its quotient is respectively $\R$ or $S^1$. Locally $Q$ is the real part of a complex-linear multiple of a holomorphic time coordinate, modulo a real period, hence is a real-analytic submersion with Levi-flat level hypersurfaces.
\end{proof}

\begin{proof}[Proof of Theorem~\ref{thm:intro:f}]
The transverse splitting $TM=T\mathcal F\oplus\C X$ defines
a unique holomorphic one-form $\omega$ with
$\ker\omega=T\mathcal F$ and $\omega(X)=1$.
Invariance gives $\mathcal L_X\omega=0$, hence
$\iota_Xd\omega=0$ by Cartan's formula. Integrability gives
$d\omega(Y,Z)=-\omega([Y,Z])=0$ for sections $Y,Z$ of
$T\mathcal F$. These identities imply $d\omega=0$.
Thus the hypotheses of Theorem~\ref{stein:thm:closures} hold.
Apply the period-cover construction and the classification
of closed subgroups of $\C$ in that theorem. Discrete closure of
the period group gives closed leaves; a proper nondiscrete
closure gives real-analytic Levi-flat leaf closures; and
$H=\C$ gives dense leaves. None of these steps requires
Steinness.
\end{proof}

The same period-cover method extends the leaf-closure description to normal complex spaces.

\begin{corollary}[Leaf closures on normal complex spaces]
\label{cor:normal-leaf-closures}
Let $S$ be a connected normal complex analytic space satisfying the
 degree-one reflexive Poincar\'e property. Let $X$ be a complete
 holomorphic vector field on $S$, with holomorphic flow
 $\Phi:\C\times S\to S$, and let
 $\omega\in H^0(S,\mathcal Z_S^1)$ be a closed reflexive holomorphic
 one-form satisfying $\omega(X)=1$ on $S_{\rm reg}$.
 Write $\mathcal F_{\rm reg}=\ker(\omega|_{S_{\rm reg}})$,
 let $\Lambda=\Per_S(\omega)$ be the image of the period character
 defined by local primitives on $S$, and set
 $H=\overline\Lambda\subset\C$.
 Then there is a continuous surjection
 \[
 Q:S\longrightarrow\C/H
 \]
 such that, for every leaf $L$ of $\mathcal F_{\rm reg}$,
 \[
 \boxed{\overline L^{\,S}=Q^{-1}(Q(L)).}
 \]
 More precisely:
 \begin{enumerate}[label=(\roman*),leftmargin=*]
 \item If $H$ is discrete, $Q$ is a holomorphic locally trivial
 fiber bundle with connected normal analytic fibers. Each fiber is
 the closure in $S$ of one regular leaf, and its regular locus is
 that leaf.
 \item If $H$ is proper and nondiscrete, $\C/H$ is isomorphic as a
 real-analytic manifold to $\R$ or $S^1$. Locally in $S$, the map
 $Q$ is induced by a real-linear functional of a holomorphic time
 coordinate. Its fibers are closed real-analytic subsets of real
 codimension one, whose intersections with $S_{\rm reg}$ are
 Levi-flat real-analytic hypersurfaces; the fibers may contain
 singular points of $S$.
 \item If $H=\C$, every leaf of $\mathcal F_{\rm reg}$ is dense in
 $S$.
 \end{enumerate}
 Neither Steinness nor finite generation of $H_1(S,\Z)$ is assumed.
\end{corollary}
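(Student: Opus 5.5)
The plan is to transplant the period-cover argument of Theorem~\ref{stein:thm:closures} to $S$, with local primitives replacing global forms on a manifold. By the degree-one reflexive Poincar\'e property, $\omega$ has local holomorphic primitives on all of $S$, and they differ by locally constant functions. Their additive monodromy is a character $\pi_1(S)\to\C$ with image $\Lambda$; by Lemma~\ref{lem:regular-loops} this image agrees with the one computed on $S_{\rm reg}$. Let $\widehat S\to S$ be the connected regular covering attached to the kernel of this character; it exists because complex spaces are locally contractible. On $\widehat S$ the local primitives glue to a global holomorphic function $T$. Exactly as in Corollary~\ref{cor:reflexive-fibration}, $X(T)=1$ holds on the regular locus and hence everywhere by normality. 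The flow lifts to a complete flow $\widehat\Phi$ with $T(\widehat\Phi_t(q))=T(q)+t$.

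This gives a biholomorphism $\widehat S\simeq L_0\times\C$ with $L_0=T^{-1}(0)$, and $L_0$ is connected because $\widehat S$ is. The deck group acts by $(p,t)\mapsto(\rho(\lambda)p,t+\lambda)$. Consequently $Q([p,t])=t \bmod H$ is well defined, continuous and surjective.

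Next I would identify the leaves. Since $\widehat S_{\rm reg}=(L_0)_{\rm reg}\times\C$, the regular leaves are images of $(L_0)_{\rm reg}\times\{t\}$. I need $(L_0)_{\rm reg}$ to be connected. For this I would argue that $L_0$ is normal: locally $S\simeq L_0\times\C$ via the flow chart, and a factor of a normal product is normal. A connected normal space has connected regular locus. Hence each regular leaf $L$ is dense in its fiber $\{t\}$-slice. The closure argument then runs as in Theorem~\ref{stein:thm:closures}. On one side, $\lambda_n\to h$ with $\lambda_n\in\Lambda$ gives $\Phi_h(p)\in\overline L$, so $Q^{-1}(Q(L))\subset\overline L^{\,S}$. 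On the other side, $Q$ is constant on $L$ and has closed fibers because $H$ is closed.

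For the cases, I would proceed as follows.
\begin{enumerate}[label=(\roman*),leftmargin=*]
\item If $H=\Lambda$ is discrete, this is Corollary~\ref{cor:reflexive-fibration}, whose proof does not use Steinness except for the Stein property of the fibers. The fiber $L_0$ is normal with regular locus equal to the leaf.
\item If $H$ is proper and nondiscrete, use the classification of closed subgroups of $\R^2$. Then $\C/H\simeq\R$ or $S^1$, and locally $Q=\operatorname{Re}(c f)$ modulo a real period, for a local primitive $f$. The fibers are real-analytic of real codimension one, since $df\neq0$ along $X$. On $S_{\rm reg}$ they are foliated by complex hypersurfaces, hence Levi-flat.
\item If $H=\C$, density is immediate.
\end{enumerate}

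I expect the main obstacle to be the normality and connectedness of $L_0$ together with its regular locus; I would handle it through the local product structure given by the flow charts at singular points.
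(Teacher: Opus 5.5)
Your proposal is correct and follows the paper's proof essentially step by step. The ingredients are the same: the period covering $\widehat S\simeq L_0\times\C$ obtained from the lifted complete flow, normality of $L_0$ inherited from $\widehat S$ through the product, connectedness and density of $(L_0)_{\rm reg}$ to identify regular leaves with slices and to obtain the closure identity, and the three cases from the classification of closed subgroups of $\C$.

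One small precision concerns case (i). Corollary~\ref{cor:reflexive-fibration} assumes a $\C^*$-action as well as Steinness, so what you are really reusing is its product-plus-local-sections argument, which needs only completeness of the flow; the paper carries out that argument inline.
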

\begin{proof}
 The local primitive property gives a period homomorphism
 $\pi_1(S)\to\C$ with image $\Lambda$. On the connected covering
 $\widehat S\to S$ corresponding to its kernel, the pullback of
 $\omega$ has a global holomorphic primitive
 $T:\widehat S\to\C$. Normalization and completeness lift to
 $T(\widehat\Phi_t(q))=T(q)+t$. As in the suspension argument,
 \[
 \widehat S\simeq N\times\C,\qquad N=T^{-1}(0),
 \]
 by $(n,t)\mapsto\widehat\Phi_t(n)$. The space $N$ is connected
 and normal: the covering is connected and normal, and the product
 isomorphism implies both assertions for $N$. In particular,
 $N_{\rm reg}$ is connected and dense in $N$.

 A deck transformation indexed by $\lambda\in\Lambda$ acts in
 these coordinates as
 \[
 (n,t)\longmapsto(\rho_\lambda(n),t+\lambda),
 \]
 where $\rho_\lambda$ is a biholomorphism of $N$. Thus
 $Q([n,t])=t+H$ is well defined, continuous and surjective.
 The regular leaves are the images of slices
 $N_{\rm reg}\times\{t\}$: they are connected because
 $N_{\rm reg}$ is connected, and the product decomposition
 identifies the regular locus of $\widehat S$ with
 $N_{\rm reg}\times\C$.

 Fix such a leaf $L$ and a point $[n,t+h]\in Q^{-1}(Q(L))$,
 where $h\in H$ and $t$ is the slice parameter of $L$.
 Choose $\lambda_j\in\Lambda$ with $\lambda_j\to h$.
 Since the slice $N_{\rm reg}\times\{t+\lambda_j\}$ projects
 onto the same leaf $L$, and $N_{\rm reg}$ is dense in $N$,
 one can choose $n_j\in N_{\rm reg}$ tending to $n$.
 The projected points $[n_j,t+\lambda_j]\in L$ converge to
 $[n,t+h]$. Hence $Q^{-1}(Q(L))\subseteq\overline L^{\,S}$.
 The reverse inclusion follows because $Q$ is continuous,
 $H$ is closed and $Q$ is constant on $L$.

 If $H$ is discrete, the local primitives of $\omega$ glue modulo
 $H=\Lambda$ to a holomorphic map $Q:S\to\C/\Lambda$.
 The product decomposition and local sections of
 $\C\to\C/\Lambda$ make it a locally trivial holomorphic bundle.
 The fibers identify with $N$, so are connected and normal, and
 their regular loci are precisely the regular leaves.

 If $H$ is proper and nondiscrete, classification of closed
 subgroups of $\C\simeq\R^2$ gives $\C/H\simeq\R$ or $S^1$.
 In a local holomorphic time coordinate $f$ provided by a primitive
 of $\omega$, the quotient map is locally represented by a
 nonzero real-linear functional of $f$ (possibly modulo a real
 period). Thus its level sets are locally products of a normal
 analytic space with a real line in the time disc. On the regular
 locus they are real-analytic Levi-flat hypersurfaces, while in
 $S$ they are real-analytic subsets of real codimension one.
 The case $H=\C$ follows immediately from the closure identity.
\end{proof}

As a consequence, the first Betti number restricts the possible leaf-closure behavior.

\begin{corollary}\label{stein:cor:betti}
Under the hypotheses of Theorem~\ref{stein:thm:closures}, if $H_1(M,\Z)$ is finitely generated, then $b_1(M)\le1$ forces closed leaves; $b_1(M)=2$ permits closed or intermediate Levi-flat closures but not full density. Full density requires $b_1(M)\ge3$. On a Stein manifold of complex dimension at least two, closed positive-dimensional holomorphic leaves are noncompact.
\end{corollary}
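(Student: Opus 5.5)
The plan is to reduce everything to the rank of the period group $\Lambda=\Per(\omega)$ and then read off the conclusion from the trichotomy in Theorem~\ref{stein:thm:closures}. Since $H_1(M,\Z)$ is finitely generated, the period homomorphism factors through the free part $H_1(M,\Z)/\mathrm{tors}\simeq\Z^{b_1}$, because torsion classes have period zero. Hence $\Lambda$ is the image of $\Z^{b_1}$ under a homomorphism to $\C$, and so $\Lambda$ is generated by at most $b_1(M)$ elements.

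If $b_1(M)=0$, then $\Lambda=0$. If $b_1(M)=1$, then $\Lambda$ is cyclic, and a cyclic subgroup of $\C$ is discrete. In both cases $H=\Lambda$ is discrete, and alternative (i) of Theorem~\ref{stein:thm:closures} gives closed leaves.

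For $b_1(M)=2$, write $\Lambda=\Z a+\Z b$. I claim $\overline\Lambda\ne\C$. The closure of a subgroup generated by $a,b$ lies in the real span $\R a+\R b$. If $a,b$ are $\R$-independent, $\Lambda$ is a lattice and hence discrete. If they are $\R$-dependent, the span is a real line, so the closure is contained in a proper subgroup. Thus only (i) or (ii) can occur, and full density (iii) is excluded.

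Contrapositively, $H=\C$ needs at least three generators, so $b_1(M)\ge3$. For the phrase ``permits'', I would cite Example~\ref{ex:tischler-explicit}, where $b_1=2$ and $\Lambda=2\pi i(\Z+\sqrt2\Z)$ has closure $i\R$, a proper nondiscrete subgroup. Example~\ref{ex:betti-one}-type discrete forms, for instance $\omega_n$ in Example~\ref{ex:tischler-explicit}, give the closed case. I would not attempt to claim that $b_1\ge3$ always allows density; the statement only asserts the necessity.

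For the final assertion, let $L$ be a compact positive-dimensional complex submanifold (a closed leaf) of the Stein manifold $M$. The coordinate functions of a proper embedding $M\hookrightarrow\C^N$ restrict to holomorphic functions on the compact connected manifold $L$. By the maximum principle these functions are constant, so $L$ is a point, which is a contradiction. Equivalently, Stein manifolds contain no compact positive-dimensional analytic subvarieties.

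The only mildly delicate step is the $b_1=2$ closure argument. The closure of a $\Z$-module with two generators must be identified correctly: it is either discrete, or a line, or a line plus a cyclic transverse part. This follows from the classification of closed subgroups of $\R^2$, where a subgroup that is dense in $\R^2$ needs at least three generators.
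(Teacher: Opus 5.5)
Your proposal is correct and follows essentially the same route as the paper. You bound the number of generators of $\Per(\omega)$ by $b_1(M)$, note that a subgroup of $\C$ generated by at most two elements is either a rank-two lattice or contained in a real line (so it is never dense), and rule out compact positive-dimensional leaves because holomorphic functions on a Stein manifold are constant on compact connected complex submanifolds. Your extra details (torsion classes have zero period, and Example~\ref{ex:tischler-explicit} illustrates the word ``permits'') fill in points the paper leaves implicit. For two generators, the ``line plus a cyclic transverse part'' alternative in your last paragraph never actually occurs, but listing it is harmless.
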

\begin{proof}
$\Lambda$ is generated by at most $b_1(M)$ complex numbers. An abelian subgroup of $\C$ generated by at most two elements is either discrete or lies in a real line, where its closure may be nondiscrete. Density in $\C$ needs at least three generators. A compact positive-dimensional complex submanifold cannot sit inside a Stein manifold.
\end{proof}
\begin{example}[Intermediate Levi-flat closures]\label{ex:intermediate}
On $M=(\C^*)^2$, set
\[
 X=z\partial_z,\qquad \omega=\frac{dz}{z}+\sqrt2\frac{dw}{w}.
\]
Then $\Lambda=2\pi i(\Z+\sqrt2\Z)$ and $H=i\R$. Each leaf has closure
\[
 \boxed{\;\{(z,w):\log|z|+\sqrt2\log|w|=c\}\;}
\]
for an appropriate real $c$; these are proper Levi-flat hypersurfaces.
\end{example}
\begin{example}[Every transverse leaf is dense]\label{ex:dense}
On $M=\C^*\times(\C\setminus\{0,1\})$, let
\[
 X=z\partial_z,\quad
 \omega=\frac{dz}{z}+i\frac{dw}{w}+(\sqrt2+i\sqrt3)\frac{dw}{w-1}.
\]
Its period image is $2\pi i[\Z+i\Z+(\sqrt2+i\sqrt3)\Z]$, dense in $\C$ by Kronecker's theorem; hence all leaves are dense in $M$.
\end{example}

\section{Additive actions and generic counterexamples}
Additive actions illustrate why the existence of a normalized time form need not lead to discrete-period approximation.

\begin{theorem}[Stein orbit quotient: positive approximation]\label{stein:thm:Steinquotient}
Suppose a complete holomorphic $X$ generates a free proper $\C$-action on Stein $M$, and the quotient Riemann surface $B=M/\C$ is Stein. If $H_1(M,\Z)$ is finitely generated, every $\omega\in\T_X$ admits normalized discrete-period approximation.
\end{theorem}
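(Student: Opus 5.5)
The plan is to trivialize $M$ over the quotient and reduce the approximation to choosing a perturbation class on $B$. Let $\pi:M\to B=M/\C$ be the orbit map. Since the action is free and proper, $\pi$ is a holomorphic principal $\C$-bundle. Such bundles over $B$ are classified by $H^1(B,\mathcal O_B)$, and this group vanishes because $B$ is a Stein (open) Riemann surface. Hence there is a global holomorphic section $\sigma:B\to M$, and the map
\[
B\times\C\to M,\qquad (b,t)\mapsto\varphi_t(\sigma(b)),
\]
is a biholomorphism under which $X=\partial_t$.

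In these coordinates I would describe $\T_X$ explicitly. A closed form $\omega$ with $\omega(X)=1$ satisfies $\mathcal L_X\omega=d(\omega(X))=0$, so its coefficients do not depend on $t$. Therefore
\[
\omega=dt+\beta,
\]
where $\beta$ is a holomorphic one-form on $B$; it is automatically closed since $\dim B=1$. Conversely, every such expression lies in $\T_X$. Since $M\simeq B\times\C$ deformation retracts onto $B$, we get $H_1(M,\Z)\cong H_1(B,\Z)$, which is free of finite rank $r$, and $\Per(\omega)=\Per(\beta)$.

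Next comes the approximation. Choose holomorphic one-forms $\theta_1,\dots,\theta_r$ on $B$ whose classes form a basis of $H^1(B,\C)$; this uses the holomorphic de Rham theorem on Stein $B$, for which $H^1(B,\C)=H^0(\Omega^1)/d\mathcal O$. Write $[\beta]$ in coordinates given by its periods on an integral basis $e_1,\dots,e_r$ of $H_1(B,\Z)$. Approximate these period values by $\xi_n(e_k)\in\Q+i\Q$, so that $\xi_n\to[\beta]$. Then set
\[
\beta_n=\beta+\sum_k c_{n,k}\theta_k,
\]
where the coefficients $c_{n,k}\to 0$ are chosen so that $[\beta_n]=\xi_n$. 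Since the $\theta_k$ are fixed, $\beta_n\to\beta$ uniformly on compact sets. Put $\omega_n=dt+\pi^*\beta_n$. Then $\omega_n(X)=1$ exactly, $\omega_n\to\omega$ in the compact-open topology, and $\Per(\omega_n)\subset\frac1{N_n}\Z[i]$ is discrete. Theorem~\ref{stein:thm:suspension} then provides the associated fibrations.

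The main obstacle is the first step, namely obtaining the global product structure $M\cong B\times\C$. Freeness and properness are needed to make $B$ a Hausdorff Riemann surface and $\pi$ a locally trivial principal bundle, via holomorphic slices. Beyond that, the key input is the vanishing of $H^1(B,\mathcal O)$. Unlike the $\C^*$ case, there is no circle averaging available, so the normalization $\omega_n(X)=1$ must come from the explicit form $dt+\beta$ rather than from an orbit-period computation. If $r=0$, then $\omega$ itself already has trivial, hence discrete, periods.
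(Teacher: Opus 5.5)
Your proposal is correct and follows essentially the same route as the paper. You trivialize the principal $\C$-bundle using $H^1(B,\mathcal O_B)=0$, write every element of $\T_X$ as $dt+\pi^*\beta$, and approximate $[\beta]$ in the finite-dimensional space $H^1(B,\C)$ by Gaussian-rational classes realized through small combinations of fixed holomorphic representatives; you simply spell out details the paper leaves implicit, namely the coordinate check that $\beta$ is independent of $t$ and the explicit bound $\Per(\omega_n)\subset\frac{1}{N_n}\Z[i]$.
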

\begin{proof}
The projection $\pi:M\to B$ is a principal $\C$-bundle. Its class in $H^1(B,\OO_B)$ vanishes because $B$ is Stein, so $M\cong B\times\C$ and $X=\partial_t$. Every normalized closed form is $dt+\pi^*\alpha$, with $d\alpha=0$. Approximate $[\alpha]\in H^1(B,\C)$ by discrete-period classes in the finite-dimensional cohomology space and realize small corrections by global closed holomorphic forms on Stein $B$. The forms $dt+\pi^*\alpha_j$ are normalized and converge compact-open.
\end{proof}

We identify the relevant period constraints on a suspension through the cohomology of its base.

\begin{proposition}[Identification with quotient Dolbeault cohomology]\label{stein:prop:Dolbeault}
Let $\pi:M\to B$ be a holomorphic principal $\C$-bundle over a connected
Riemann surface, and assume that the total space $M$ is Stein. Let $X$ be
its infinitesimal generator and let $\OO_X$ denote the sheaf of local
holomorphic first integrals of $X$. Then there are canonical isomorphisms
\[
 \OO_X\simeq\pi^{-1}\OO_B,\qquad
 \mathcal Q_X:=\OO(M)/X\OO(M)
 \simeq H^1(M,\OO_X)\simeq H^1(B,\OO_B).
\]
If $B$ is compact of genus $g$, Dolbeault theory identifies the last group
with $H^{0,1}(B)$, of complex dimension $g$.
\end{proposition}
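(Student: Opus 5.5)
The plan is to prove the three isomorphisms in turn, identifying $\OO_X$ first, then $\mathcal Q_X$ with $H^1(M,\OO_X)$, and finally $H^1(M,\OO_X)$ with $H^1(B,\OO_B)$.

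\emph{First isomorphism.} Work locally on $B$. Since $\pi$ is a holomorphic principal $\C$-bundle, over a small disc $U\subset B$ we have a trivialization $\pi^{-1}(U)\simeq U\times\C$ in which $X=\partial_t$. A local holomorphic first integral $f$ on a connected open set $W\subset\pi^{-1}(U)$ satisfies $\partial_t f=0$. Hence $f$ is constant along the connected intersections of $W$ with the fibers $\{b\}\times\C$. On a basis of product opens $W=U'\times D$, this shows $f=g\circ\pi$ with $g$ holomorphic on $U'$. Thus the natural map $\pi^{-1}\OO_B\to\OO_X$, $g\mapsto g\circ\pi$, is an isomorphism on stalks. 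It is injective because $\pi$ is surjective.

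\emph{Second isomorphism.} Consider the sequence of sheaves on $M$
\[
0\longrightarrow\OO_X\longrightarrow\OO_M\xrightarrow{\;X\;}\OO_M\longrightarrow0 .
\]
Exactness on the right is the local solvability of $\partial_t h=f$ in product charts $U'\times D$, where one integrates in $t$ over the convex disc $D$ and keeps holomorphic dependence on the base. Taking cohomology and using $H^1(M,\OO_M)=0$ (Cartan B on the Stein manifold $M$) gives
\[
\OO(M)\xrightarrow{X}\OO(M)\to H^1(M,\OO_X)\to0,
\]
which is exactly $\mathcal Q_X\simeq H^1(M,\OO_X)$.

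\emph{Third isomorphism.} By the first step, $H^1(M,\OO_X)\simeq H^1(M,\pi^{-1}\OO_B)$. Since the fibers of $\pi$ are copies of $\C$, hence contractible, and $\pi$ is a locally trivial fibration, the Vietoris--Begle / Leray argument gives $R^q\pi_*\pi^{-1}\OO_B=0$ for $q>0$ and $\pi_*\pi^{-1}\OO_B=\OO_B$. The Leray spectral sequence then yields $H^1(M,\pi^{-1}\OO_B)\simeq H^1(B,\OO_B)$. Canonicity of all three isomorphisms follows since each map is natural: pullback, a connecting homomorphism, and the edge map.

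\emph{Compact case and main obstacle.} If $B$ is compact of genus $g$, Dolbeault's theorem gives $H^1(B,\OO_B)\simeq H^{0,1}(B)$. Serre duality with $H^0(B,K_B)$ gives dimension $g$. The main obstacle is the vanishing of $R^q\pi_*$, which needs local computation over small discs in $B$. I would handle it by showing $H^q(U\times\C,\pi^{-1}\OO_B)=H^q(U,\OO_B)$ via homotopy invariance of sheaf cohomology for pulled-back sheaves along the contractible fiber, as in Bredon's vertical homotopy principle. Cofinality of such product opens then computes the stalks of $R^q\pi_*$ as $\varinjlim_U H^q(U,\OO_B)=0$. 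Note that the Stein hypothesis is used only for the second isomorphism.
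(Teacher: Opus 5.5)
Your proposal is correct. Your first two steps coincide with the paper's: the same bundle-chart argument gives $\OO_X\simeq\pi^{-1}\OO_B$, and the same exact sequence $0\to\OO_X\to\OO_M\xrightarrow{X}\OO_M\to0$, together with Cartan's Theorem B on $M$, gives $\mathcal Q_X\simeq H^1(M,\OO_X)$.

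The routes differ only in the local vanishing $R^q\pi_*\OO_X=0$ that makes the Leray spectral sequence degenerate.

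\begin{itemize}
\item \textbf{The paper's route.} It uses no topological homotopy invariance. It restricts the same exact sequence to $\pi^{-1}(U)\simeq U\times\C$ for a small disc $U$, and notes that this open set is Stein. It shows $\partial_t$ is surjective on global holomorphic functions there via the explicit primitive $\int_0^t h(z,s)\,ds$. It then reads off $H^q(\pi^{-1}(U),\OO_X)=0$ for $q\ge1$ from the long exact sequence and Theorem B.
\item \textbf{Your route.} You identify $H^q(U\times\C,\pi^{-1}\OO_B)$ with $H^q(U,\OO_B)$ by homotopy invariance of cohomology of pulled-back sheaves along a projection with fiber $\R^2$. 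This is Bredon's vertical homotopy principle, or Kashiwara--Schapira's $Rp_*p^{-1}G\simeq G$ for $p:Y\times\R^n\to Y$. You then use vanishing of $H^q(U,\OO_B)$ on the disc.
\end{itemize}

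Your route is more general: it works for any sheaf pulled back along a bundle with Euclidean fibers and never needs the differential-operator resolution. Its cost is that it imports a nontrivial sheaf-theoretic theorem. The paper's route is self-contained, reusing the resolution already in hand plus elementary Stein theory.

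One caution: drop the appeal to Vietoris--Begle. Its standard form requires $\pi$ to be proper (closed), which fails here since the fibers are copies of $\C$. The vertical homotopy principle you name as your actual tool is the right one.

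Your Serre-duality argument for $\dim_\C H^{0,1}(B)=g$ fills in a detail the paper leaves implicit.
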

\begin{proof}
We first justify the identification of sheaves and the passage from total
space to base, rather than relying only on the contractibility of the fiber.
On every holomorphic bundle chart $\pi^{-1}(U)\simeq U\times\C$,
write the fiber coordinate as $t$. Then $X=\partial_t$, and a holomorphic
function is killed by $X$ precisely when it is locally the pullback of
a holomorphic function of the base coordinate. This gives the asserted
sheaf isomorphism $\OO_X\simeq\pi^{-1}\OO_B$. Moreover the local flow-box
primitive gives an exact sequence of sheaves of complex vector spaces
\begin{equation}\label{eq:OXlocalresolution}
 0\longrightarrow\OO_X\longrightarrow\OO_M
   \xrightarrow{\,X\,}\OO_M\longrightarrow0.
\end{equation}
Since $M$ is Stein, the long exact cohomology sequence and
Cartan's Theorem~B applied to $\OO_M$ imply
\[
 H^1(M,\OO_X)\simeq\OO(M)/X\OO(M)=\mathcal Q_X.
\]

For the base-cohomology identification, take a sufficiently small Stein
coordinate disc $U\subset B$ over which the bundle is holomorphically
trivial. The total space $\pi^{-1}(U)\simeq U\times\C$ is Stein.
For an arbitrary $h\in\OO(U\times\C)$, the holomorphic function
\[
 H(z,t)=\int_0^t h(z,s)\,ds
\]
is globally well defined, and $\partial_t H=h$.
Applying cohomology to~\eqref{eq:OXlocalresolution} on $U\times\C$,
using this surjectivity and Cartan's Theorem~B, gives
\[
 H^q\bigl(\pi^{-1}(U),\OO_X\bigr)=0\qquad(q\ge1).
\]
Also, the $X$-invariant holomorphic functions on the entire fibers over
$U$ are exactly pullbacks of $\OO_B(U)$, so
$\pi_*\OO_X\simeq\OO_B$.
The displayed local vanishing proves $R^q\pi_*\OO_X=0$ for $q\ge1$.
The Leray spectral sequence therefore degenerates to yield
\[
 H^1(M,\OO_X)\simeq H^1(B,\OO_B).
\]
If $B$ is compact, the Dolbeault isomorphism
$H^1(B,\OO_B)\simeq H^{0,1}(B)$ completes the proof.
\end{proof}

The next result shows that this constraint can exclude discrete periods generically.

\begin{theorem}[Generic failure of normalized discrete periods]\label{stein:thm:counterexample}
Let $B$ be \emph{any} compact Riemann surface of genus $g\ge3$, with
universal covering $\D\to B$ and deck group $\Gamma$.
There exists a residual subset
\[
 \mathscr G_B\subset\operatorname{Hom}(\Gamma,\R)
    \simeq H^1(B,\R),
\]
whose complement has Lebesgue measure zero, such that for every
$\rho\in\mathscr G_B$ the complex surface
\[
 M_\rho=(\D\times\C)/\Gamma,
 \qquad \gamma\cdot(z,t)=(\gamma z,t+\rho(\gamma)),
\]
is connected and Stein, and its translation vector field $X$ is complete
and generates a free, proper holomorphic $\C$-action. Moreover,
\[
 H_1(M_\rho,\Z)\simeq\Z^{2g},\qquad \T_X\ne\varnothing,
\]
but \emph{no} $\eta\in\T_X$ has a discrete period group:
\[
 \boxed{\;\{\eta\in\T_X:\operatorname{Per}(\eta)
       \text{ is discrete}\}=\varnothing.\;}
\]
In particular, normalized discrete-period approximation fails at every
$\eta\in\T_X$. For $g=3$ this gives the original genus-three example.
\end{theorem}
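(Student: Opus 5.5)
The plan is to make everything explicit on the cover $\D\times\C$ and reduce discreteness of periods to a countable union of linear conditions on $\rho$, which we then show is Lebesgue-null for $g\ge3$.

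\emph{Basic structure.} Since $\Gamma$ acts freely and properly on $\D$, the diagonal action $\gamma\cdot(z,t)=(\gamma z,t+\rho(\gamma))$ is free and properly discontinuous. So $M_\rho$ is a connected complex surface and $\pi:M_\rho\to B$ is a holomorphic principal $\C$-bundle. The translation field $\partial_t$ commutes with $\Gamma$, so $X$ is complete and generates a free, proper $\C$-action whose orbits are the fibers of $\pi$. The fiber $\C$ is contractible, hence $M_\rho\simeq B$ homotopically and $H_1(M_\rho,\Z)\simeq\Z^{2g}$. The form $dt$ is $\Gamma$-invariant because the translations are constant, so $dt$ descends to an element of $\T_X$; thus $\T_X\ne\varnothing$.

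\emph{Steinness.} The bundle class of $M_\rho$ is the image of $\rho$ under $H^1(B,\R)\to H^1(B,\OO_B)$, and this map is a real isomorphism by Hodge theory. I will produce a psh exhaustion and then rule out compact curves.
\begin{itemize}
\item Let $h$ be a real harmonic function on $\D$ with $h\circ\gamma=h+\rho(\gamma)$; it is the primitive of the harmonic representative of $\rho$. Then $\operatorname{Re}t-h(z)$ and $\operatorname{Im}t$ are $\Gamma$-invariant pluriharmonic functions.
\item Because $B$ is compact, $\phi=(\operatorname{Re}t-h)^2+(\operatorname{Im}t)^2$ is a continuous psh exhaustion of $M_\rho$.
\item A compact curve $C\subset M_\rho$ cannot lie in a fiber $\C$, so it dominates $B$. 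On it the pluriharmonic invariants above are constant. I would then argue that $C$ lifts to a holomorphic section $t=f(z)$ of the affine bundle, possibly after a finite base change. Such a section would force $f\circ\gamma=f+\rho(\gamma)$, i.e.\ $\rho$ would vanish in $H^1(B,\OO_B)$. That is impossible for $\rho\neq0$, again by injectivity of the real-to-Dolbeault map.
\item With no compact curves and a psh exhaustion, I would invoke the standard criterion for surfaces (affine bundles over curves, in the spirit of Serre's example). This is the step I expect to be the main obstacle. If it resists, I will refine $\phi$ to a strictly psh exhaustion using the structure of $\rho$ for generic $\rho$, shrinking $\mathscr G_B$ by a further null set if needed.
\end{itemize}

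\emph{Description of $\T_X$.} Pull $\eta\in\T_X$ back to $\D\times\C$ and write it as $dt+a\,dz$. Closedness gives $\partial_ta=0$, so $\eta=dt+\beta$ with $\beta$ a holomorphic one-form on $B$. Conversely, every such form lies in $\T_X$. For $\gamma\in\Gamma\simeq\pi_1(M_\rho)$ the period is $\rho(\gamma)+P_\beta(\gamma)$, where $P:H^0(B,K_B)\to\operatorname{Hom}(\Z^{2g},\C)$ is the complex-linear period map. A homomorphism $\phi:\Z^{2g}\to\C$ has discrete image exactly when it lies in one of the sets $V_{a,b}=\{\lambda_1a+\lambda_2b:\lambda_i\in\C\}$ for some $a,b\in\Z^{2g}$. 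This is a countable family of complex subspaces of dimension at most $2$ (it also covers rank at most one). Hence the bad set is
\[
\mathscr B=\bigcup_{a,b}\bigl(V_{a,b}+P(H^0(B,K_B))\bigr)\cap H^1(B,\R).
\]

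\emph{Measure count.} Compose the inclusion $H^1(B,\R)\hookrightarrow\C^{2g}$ with the quotient by $P(H^0(B,K_B))$. The target is $\C^{2g}/P(H^0K_B)\simeq H^{0,1}(B)\simeq\C^g$, and this composition is a real-linear isomorphism $\R^{2g}\to\C^g$. Each $V_{a,b}$ maps to a complex subspace of dimension at most $2$, so its preimage in $H^1(B,\R)$ is a real subspace of dimension at most $4<2g$ when $g\ge3$. Therefore $\mathscr B$ is a countable union of proper real subspaces, so it is Lebesgue-null and meagre. Its complement $\mathscr G_B$ is residual, and for $\rho\in\mathscr G_B$ no $\eta\in\T_X$ has discrete periods.
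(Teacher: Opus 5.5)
Outside the Steinness step, your argument is the paper's. You reach the same description $\T_X=dt+\pi^*H^0(B,\Omega^1_B)$: you write $\eta=dt+a\,dz$ upstairs, while the paper uses Cartan's formula and basic forms. You make the same reduction of discreteness to $[\eta]\in\operatorname{span}_\C\{u,v\}$ for integral classes $u,v$, followed by projection to $H^{0,1}(B)$. You also give the same dimension count $4<2g$, using that $H^1(B,\R)\to H^{0,1}(B)$ is a real isomorphism. Your ``exactly when'' for discrete images holds only in one direction, but only that direction is used. Since $0\in\mathscr B$, every $\rho\in\mathscr G_B$ is nonzero.

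\textbf{The gap is Steinness.} No criterion says that a complex surface with a continuous psh exhaustion and no compact curves is Stein, and the statement is false. Grauert's domain $\{|\operatorname{Re}z_2|<\epsilon\}$ in a suitable two-dimensional complex torus is a counterexample. It is a tube around a real three-torus whose Levi leaves $\{z_2=c\}$ are noncompact and dense. It has the psh exhaustion $-\log(\epsilon-|\operatorname{Re}z_2|)$ and contains no compact curve, by the same pluriharmonic argument you use. Yet it carries only constant holomorphic functions.

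Your $\phi$ degenerates in exactly this Levi-flat way. With $u=\operatorname{Re}t-h$ and $v=\operatorname{Im}t$, its Levi form is $2(|\partial u|^2+|\partial v|^2)$, where $\partial v=dt/2i$ and $\partial u=dt/2-\partial h$. So it vanishes on the horizontal direction along every fiber $\pi^{-1}(p)$, for $p$ a zero of the nonzero holomorphic form $\partial h$; there are $2g-2$ such zeros counted with multiplicity. That locus is noncompact, so you do not even get an exhaustion that is strictly psh outside a compact set. Your fallback of refining $\phi$ for generic $\rho$ is not backed by any construction.

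The paper does not prove this step directly. It cites Iannuzzi--Trapani, Proposition~4.1(i), for every nonzero real character, so no further shrinking of $\mathscr G_B$ is needed.

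If you want a self-contained argument, one route is the following. View $M_\rho$ as the complement of the section at infinity $C$ in the ruled surface $\mathbb P(E)$, where $E$ is the nonsplit extension of $\OO_B$ by $\OO_B$ whose class is the image of $\rho$ in $H^1(B,\OO_B)$. The curve $C$ has trivial normal bundle, and its first Ueda obstruction is that extension class, which is nonzero. Ueda's theorem then gives $C$ strongly pseudoconcave neighborhoods, so $M_\rho$ is $1$-convex. Your no-compact-curves argument, combined with the Remmert reduction, then shows $M_\rho$ is Stein.
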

\begin{proof}
Fix the compact surface $B$. Since $g\ge3$, its universal covering is
$\D$ and $\Gamma\simeq\pi_1(B)$. A real character
$\rho:\Gamma\to\R$ defines the indicated holomorphic principal
$\C$-bundle $\pi:M_\rho\to B$. Translation in $t$ commutes with the
$\Gamma$-action, so it descends to a complete, free holomorphic
$\C$-action; local triviality over the Hausdorff base makes the action
proper. As the fibers are contractible, the bundle projection is a
homotopy equivalence, and
$H_1(M_\rho,\Z)\simeq H_1(B,\Z)\simeq\Z^{2g}$.

The coordinate differential $dt$ is $\Gamma$-invariant and descends to
a closed holomorphic form $\omega_\rho$ with $\omega_\rho(X)=1$.
Under the canonical identification
$H^1(M_\rho,\C)\simeq H^1(B,\C)$, its cohomology class corresponds
to the class of the additive character $\rho$ (up to the chosen deck
convention). The character $\rho$ is nontrivial on the generic set under
consideration; the suspension is then exactly the universal
globalization of the compact-hyperbolic-base model of
Iannuzzi--Trapani. Thus $M_\rho$ is Stein by
\cite[Proposition~4.1(i)]{IannuzziTrapani2012}; see also
\cite[Remark~4.3]{IannuzziTrapani2012}.

We now determine \emph{all} normalized closed holomorphic forms.
If $\eta\in\T_X$, then $\beta=\eta-\omega_\rho$ satisfies
$d\beta=0$ and $\beta(X)=0$. Cartan's formula gives
$\mathcal L_X\beta=0$. Thus $\beta$ is basic for the principal bundle,
and descends to a holomorphic one-form $\alpha$ on $B$. Conversely
any such form may be pulled back. Hence
\begin{equation}\label{eq:all-normalized-on-Mrho}
 \boxed{\;\T_X=\omega_\rho+\pi^*H^0(B,\Omega_B^1).\;}
\end{equation}

Consider the Hodge decomposition
\[
 H^1(B,\C)=H^{1,0}(B)\oplus H^{0,1}(B),
 \qquad\dim_\C H^{0,1}(B)=g,
\]
and let $q:H^1(B,\C)\to H^{0,1}(B)$ be projection.
Every $\eta\in\T_X$ has the same antiholomorphic component:
\begin{equation}\label{eq:fixed-hodge}
 q([\eta])=q([\rho]),
\end{equation}
where the classes on $M_\rho$ have been identified with classes on $B$.

If $\operatorname{Per}(\eta)\subset\C$ is discrete, it is a free
abelian group of rank at most two. Choose generators $a,b\in\C$
(with one or both zero if the rank is smaller). The integral coordinate
characters of the period homomorphism provide
$u,v\in H^1(B,\Z)$ with
\[
 [\eta]=a\,u+b\,v,
 \qquad q([\rho])=a\,q(u)+b\,q(v).
\]
Thus $q([\rho])$ belongs to the complex vector space spanned by
$q(u)$ and $q(v)$.

Define the exceptional set
\[
 \mathscr E_B=
 \bigcup_{u,v\in H^1(B,\Z)}
 \operatorname{span}_\C\{q(u),q(v)\}
 \ \subset H^{0,1}(B).
\]
Because $g\ge3$, each constituent has complex dimension at most two
and is therefore a proper closed linear subspace of $H^{0,1}(B)$.
There are countably many pairs $(u,v)$, hence $\mathscr E_B$ is
meagre and has Lebesgue measure zero.
The restriction
\[
 q:H^1(B,\R)\longrightarrow H^{0,1}(B)
\]
is an isomorphism of \emph{real} vector spaces: an element of
$H^{1,0}(B)$ which is also real must vanish, and both real dimensions
are $2g$. Consequently
\[
 \mathscr G_B
 =H^1(B,\R)\setminus q^{-1}(\mathscr E_B)
\]
is residual and of full measure. Since $0\in\mathscr E_B$, every
$\rho\in\mathscr G_B$ is nonzero, so Steinness applies. Equation
\eqref{eq:fixed-hodge} contradicts discreteness of the periods for
any $\eta\in\T_X$. This completes the proof.
\end{proof}
\begin{example}[Genus-three Stein principal bundle]\label{ex:genus3}
Let $B=\D/\Gamma$ be compact of genus three and let $\rho:\Gamma\to\R$ belong to the residual full-measure set $\mathscr G_B$ constructed in Theorem~\ref{stein:thm:counterexample}. Then
$M_\rho=(\D\times\C)/\Gamma$, with $\gamma(z,t)=(\gamma z,t+\rho(\gamma))$, is Stein, $X=\partial_t$ induces a free proper $\C$-action, and the descending $dt$ is closed and normalized. No normalized closed form on $M_\rho$ has discrete periods, by the Hodge-theoretic argument in that theorem.
\end{example}

Discrete-period approximation
may also occur for nonperiodic additive flows:

\begin{example}[Nonperiodic additive flow with discrete-period approximation]\label{ex:additive-success}
On $M=(\C^*)^2$, let
\[
 X=z\partial_z+\sqrt2\,w\partial_w,\qquad
 \omega=\frac12\frac{dz}{z}+\frac1{2\sqrt2}\frac{dw}{w}.
\]
The complete flow $\Phi_t(z,w)=(e^tz,e^{\sqrt2 t}w)$ defines a faithful additive $\C$-action (no nonzero common time period). For real $\varepsilon\ne0$ set
\[
 \omega_\varepsilon=\Big(\tfrac12+i\varepsilon\Big)\frac{dz}{z}
 +\frac{\tfrac12-i\varepsilon}{\sqrt2}\frac{dw}{w}.
\]
Then $d\omega_\varepsilon=0$, $\omega_\varepsilon(X)=1$, $\omega_\varepsilon\to\omega$, and the two period generators are $\R$-linearly independent for $\varepsilon\ne0$. Thus period groups are lattices. This illustrates why the genus-three obstruction is genuinely geometric rather than a consequence of the absence of a circle subgroup.
\end{example}

\section{Singular multiplicative actions and logarithmic time}\label{sec:singular-log}
When the infinitesimal generator has zeros, logarithmic forms provide normalized time coordinates away from invariant divisors.

In this subsection $X$ is \emph{allowed to vanish}, so $\T_X$ need not exist on the whole manifold. A meromorphic normalization $\eta(X)=1$ is interpreted as an identity in the field of meromorphic functions, not as a pointwise equality at the fixed points.

\begin{theorem}[A logarithmic normalized time form for a singular action]\label{stein:thm:singular}
Let $M$ be a connected Stein manifold and let $X\not\equiv0$ be the holomorphic infinitesimal generator of a holomorphic $\C^*$-action, possibly with fixed points. Then there are $k\in\Z\setminus\{0\}$ and a nonzero holomorphic semi-invariant $f\in\OO(M)$ satisfying $X(f)=k f$. Its zero divisor $D=(f=0)$ is invariant and contains $\operatorname{Sing}(X)$. The meromorphic form
\[
 \eta=\frac1k\frac{df}{f}
\]
is closed and satisfies $\eta(X)=1$ meromorphically. The complement $U=M\setminus D$ is Stein; $X|_U$ is nowhere zero and complete; the restriction of $\eta$ is normalized, holomorphic and closed with
\[
 \Per_U(\eta)\subseteq \frac{2\pi i}{k}\Z.
\]
Moreover $f:U\to\C^*$ is a surjective holomorphic submersion. If $H_1(U,\Z)$ is finitely generated, each closed holomorphic $\omega$ on $U$ with $\omega(X)=1$ is a compact-open limit on $U$ of normalized closed forms with discrete period groups, and each such approximant defines a holomorphic suspension over $\C/\Per_U(\omega_j)$.
\end{theorem}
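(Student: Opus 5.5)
The plan is to obtain the semi-invariant $f$ from a Fourier decomposition along the circle subgroup of the $\C^*$-action, and then to reduce everything on $U$ to earlier results: Theorem~\ref{stein:thm:Cstar} for the approximation and Theorem~\ref{stein:thm:suspension} for the suspensions.

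\emph{Step 1: the semi-invariant.} For $h\in\OO(M)$ and $k\in\Z$ set
\[
h_k(x)=\frac1{2\pi}\int_0^{2\pi}e^{-ik\theta}\,h(e^{i\theta}\cdot x)\,d\theta .
\]
Each $h_k$ is holomorphic, since we integrate a holomorphic family over a compact parameter set. A change of variable gives $h_k(e^{i\varphi}\cdot x)=e^{ik\varphi}h_k(x)$. Because $s\mapsto h_k(s\cdot x)$ is holomorphic in $s\in\C^*$, the identity principle upgrades this to $h_k(s\cdot x)=s^kh_k(x)$ for all $s\in\C^*$. Differentiating along $s=e^t$ gives $X(h_k)=kh_k$. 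The Fourier series $\sum_k h_k(e^{i\theta}\cdot x)$ recovers $h(e^{i\theta}\cdot x)$.

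Suppose, for contradiction, that $h_k=0$ for every $h$ and every $k\ne0$. Then every holomorphic function is $S^1$-invariant, hence (by holomorphy) constant along $\C^*$-orbits. Since holomorphic functions on the Stein manifold $M$ separate points, every orbit is a point, so $X\equiv0$. This contradicts $X\not\equiv0$, so some $f=h_k\ne0$ with $k\ne0$ exists.

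At a zero $p$ of $X$, the point $p$ is fixed, so $f(p)=s^kf(p)$ for all $s\in\C^*$, which forces $f(p)=0$. Hence $\operatorname{Sing}(X)\subset D$. The divisor $D$ is invariant because $f(s\cdot x)=s^kf(x)$.

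\emph{Step 2: properties of $U$ and $\eta$.}
\begin{itemize}
\item $U=M\setminus D$ is the complement of a hypersurface in a connected Stein manifold, so it is connected and Stein.
\item $U$ is $\C^*$-invariant, so $X|_U$ is complete.
\item $X|_U$ is nowhere zero, because all zeros of $X$ lie in $D$.
\item On $U$ the form $\eta=\frac1k\,df/f$ is holomorphic and closed, and $\eta(X)=X(f)/(kf)=1$. Since $f$ is holomorphic on $M$, the same identity holds meromorphically on $M$.
\item Periods of $df/f$ are the winding numbers of $f$ times $2\pi i$, so $\Per_U(\eta)\subseteq\frac{2\pi i}{k}\Z$.
\item $f$ is a submersion on $U$ because $df(X)=kf\neq0$ there.
\item $f$ is surjective onto $\C^*$ because $f(\varphi_t(x))=e^{kt}f(x)$, and $t\mapsto e^{kt}$ covers $\C^*$.
\end{itemize}

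\emph{Step 3: approximation and suspensions.} The manifold $U$ is connected and Stein, carries the restricted $\C^*$-action with nowhere-vanishing generator, and is assumed to have finitely generated $H_1(U,\Z)$. Theorem~\ref{stein:thm:Cstar} therefore applies: every closed $\omega$ on $U$ with $\omega(X)=1$ is a compact-open limit of normalized closed $\omega_j$ with discrete periods. Applying Theorem~\ref{stein:thm:suspension} to each $\omega_j$, using completeness of $X|_U$, gives the suspension over $\C/\Per_U(\omega_j)$.

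The main obstacle is Step 1. One must check that the Fourier coefficients are holomorphic and satisfy the semi-invariance for all of $\C^*$, not only for $S^1$, and that the nonvanishing of some $h_k$ with $k\ne0$ follows from $X\not\equiv0$ via point separation. Everything after that is a direct verification together with citations of the earlier theorems.
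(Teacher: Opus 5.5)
Your proposal is correct and follows the paper's proof essentially step for step. Like the paper, you take circle Fourier coefficients $h_k$, extend them to $\C^*$-semi-invariants by holomorphic continuation, bound the periods by winding numbers, get surjectivity of $f$ from the orbit relation, and finish with Theorems~\ref{stein:thm:Cstar} and~\ref{stein:thm:suspension} on $U$. The differences are cosmetic: the paper gets a nonzero coefficient of nonzero weight by choosing $h$ with $X(h)(p)\ne0$, whereas you argue by point separation that otherwise every orbit would be a point; you also state the connectedness of $U$, which the paper uses only implicitly.
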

\begin{proof}
Choose $p\in M$ with $X(p)\ne0$. By holomorphic separability of tangent vectors on a Stein manifold choose $h\in\OO(M)$ such that $X(h)(p)\ne0$. Its circle Fourier coefficients
\[
 h_k(q)=\frac1{2\pi}\int_0^{2\pi}e^{-ik\theta}h(e^{i\theta}\cdot q)\,d\theta,\qquad k\in\Z,
\]
are holomorphic and satisfy $h_k(s\cdot q)=s^k h_k(q)$ for all $s\in\C^*$, by holomorphic continuation from $S^1$. Since $X(h)(p)\ne0$, at least one coefficient of nonzero weight is nonzero. Take $f=h_k$ for such $k$.
The semi-invariance yields $X(f)=kf$ and implies that its zero divisor is invariant and contains every zero of $X$. On the meromorphic level,
$d(df/f)=0$ and $(df/f)(X)=X(f)/f=k$. The complement $U$ is biholomorphic to the closed submanifold
$\{(p,v)\in M\times\C:vf(p)=1\}$ of the Stein manifold $M\times\C$, hence is Stein. Completeness and preservation of $U$ follow from the original $\C^*$-action. Since $X(f)=kf$ and $k\ne0$, $f$ is a submersion on $U$; the nonempty image is a $\C^*$-orbit under $f(s\cdot p)=s^k f(p)$, hence is all of $\C^*$.
For any closed path $\gamma$ in $U$, the winding number of $f\circ\gamma$ is integral and
$\int_\gamma\eta=(2\pi i/k)\operatorname{wind}(f\circ\gamma,0)$. Thus the periods lie in the displayed discrete group. Finally apply the nonsingular normalized approximation Theorem~\ref{stein:thm:Cstar} to $U$ under the stated finite-generation hypothesis, followed by the suspension Theorem~\ref{stein:thm:suspension}. The fibers of $f$ itself need not be connected; the suspension associated with the full period group supplies connected fibers.
\end{proof}

The logarithmic construction produces a corresponding statement for singular multiplicative actions.

\begin{theorem}[Singular multiplicative actions]\label{thm:intro:e}
Let $M$ be a connected Stein manifold and $X\not\equiv0$ the infinitesimal generator of a holomorphic $\C^*$-action, possibly with zeros. There exist $k\in\Z\setminus\{0\}$ and a nonzero holomorphic semi-invariant $f$ with $X(f)=kf$. The closed meromorphic form $k^{-1}df/f$ satisfies $\eta(X)=1$ meromorphically. Its polar divisor is invariant and contains $\operatorname{Sing}(X)$; on its Stein complement $U$ the form is holomorphic, normalized and has discrete periods contained in $(2\pi i/k)\Z$. Under finite generation of $H_1(U,\Z)$, all normalized closed forms on $U$ admit discrete-period approximation. 
\end{theorem}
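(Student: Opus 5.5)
This statement is essentially a repackaging of Theorem~\ref{stein:thm:singular}, so the plan is to derive each assertion from that theorem and the results it relies on. I will also check the few points where the wording differs: the \emph{polar divisor} of $\eta$, its invariance, and the phrase ``all normalized closed forms on $U$''.

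\emph{Step 1: the semi-invariant.} Since $X\not\equiv0$, choose $p$ with $X(p)\ne0$. Because $M$ is Stein, there is $h\in\OO(M)$ with $X(h)(p)\ne0$. Expand $h$ along circle orbits into its Fourier coefficients $h_k$. Holomorphic continuation in the group parameter gives $h_k(s\cdot q)=s^kh_k(q)$ for all $s\in\C^*$, and differentiating at $s=1$ gives $X(h_k)=kh_k$. Since $X(h)=\sum_k k\,h_k$ locally, $X(h)(p)\ne0$ forces some $h_k$ with $k\ne0$ to be nonzero near $p$; take $f=h_k$.

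\emph{Step 2: the form and its polar divisor.} Set $\eta=k^{-1}df/f$. It is closed, and $\eta(X)=k^{-1}X(f)/f=1$ as meromorphic functions. Its poles are exactly the zero set $D$ of $f$, with residue $m/k$ along each component of multiplicity $m$; no component of $D$ cancels, since residues of a logarithmic derivative are nonzero. So the polar divisor is $D$. This set is $\C^*$-invariant because $f(s\cdot q)=s^kf(q)$. It contains every zero $q$ of $X$, because $kf(q)=X(f)(q)=0$.

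\emph{Step 3: the complement $U=M\setminus D$.} $U$ is Stein, being biholomorphic to the closed submanifold $\{vf=1\}\subset M\times\C$. It is invariant under the action, so $X|_U$ is complete, and $X$ has no zeros on $U$ by Step 2. For a loop $\gamma$ in $U$,
\[
\int_\gamma\eta=\frac{2\pi i}{k}\operatorname{wind}(f\circ\gamma,0),
\]
so $\Per_U(\eta)\subseteq(2\pi i/k)\Z$, which is discrete.

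\emph{Step 4: approximation.} On $U$ the field $X|_U$ is the nowhere-vanishing generator of a holomorphic $\C^*$-action on a connected Stein manifold. Here one must check that $U$ is connected: the complement of a proper analytic hypersurface in a connected manifold is connected. Assuming $H_1(U,\Z)$ is finitely generated, Theorem~\ref{stein:thm:Cstar} applies verbatim. It shows that every $\omega\in\T_{X|_U}$ is a compact-open limit of normalized closed forms with discrete periods; in particular $\T_{X|_U}$ is nonempty, since it contains $\eta|_U$.

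\emph{Main obstacle.} The substantive step is Step 1: guaranteeing a \emph{nonzero} weight $k$ with $h_k\not\equiv0$, which is where $X\not\equiv0$ and the Stein separation property are used. Everything else is bookkeeping on top of Theorem~\ref{stein:thm:singular} and Theorem~\ref{stein:thm:Cstar}.
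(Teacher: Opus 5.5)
Your proposal is correct and follows the paper's route: the paper proves this by invoking Theorem~\ref{stein:thm:singular}, whose proof uses the same circle Fourier coefficient $f=h_k$, the logarithmic form $k^{-1}df/f$, the closed embedding $\{vf=1\}\subset M\times\C$ for Steinness of $U$, the winding-number bound on periods, and then Theorem~\ref{stein:thm:Cstar} applied on $U$. Your extra checks, namely nonzero residues identifying the polar divisor with $D$ and connectedness of $U$ before applying Theorem~\ref{stein:thm:Cstar}, fill in details the paper leaves implicit.
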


\begin{proof}[Proof of Theorem~\ref{thm:intro:e}]
Theorem~\ref{stein:thm:singular} constructs a nonzero
semi-invariant holomorphic function $f$ of nonzero integral
weight $k$ and establishes the normalized logarithmic form
$k^{-1}df/f$, its invariant polar divisor, Stein complement,
and discrete period containment. Its final assertion follows
by applying normalized approximation on that complement,
provided its first integral homology is finitely generated.
These conclusions give Theorem~\ref{thm:intro:e}.
\end{proof}

Example~\ref{ex:singular-log} below displays the logarithmic pole at a fixed point.

\begin{example}[The logarithmic pole at a fixed point]\label{ex:singular-log}
For the action $s\cdot z=sz$ on $M=\C$, the generator $X=z\partial_z$ vanishes at the origin. The semi-invariant $f=z$ has weight $1$, and $\eta=dz/z$ is a closed meromorphic form with $\eta(X)=1$. On $U=\C^*$, its period group is $2\pi i\Z$, even though $H^1(\C,\C)=0$. Thus the relevant cohomology is $H^1(U,\C)$ and logarithmic residues matter.
\end{example}
\begin{remark}[Meromorphic versus merely holomorphic approximation]
The theorem guarantees approximation \emph{on $U$}. It does not assert that arbitrary approximants extend meromorphically across the chosen divisor with a prescribed pole order. Such an algebraic strengthening can be studied separately using algebraic de Rham representatives, but is not assumed in the analytic result.
\end{remark}

\section{Hyperbolicity and Period Rigidity in Holomorphic Flows}
\label{sec:hyperbolic-returns}
Finally, transverse hyperbolicity together with global dynamical product structure supplies a dynamical mechanism for discreteness of periods distinct from approximation.

The connection between jointly integrable Anosov dynamics and invariant
closed one-forms goes back to Plante's treatment of smooth Anosov flows
\cite{Plante1972} and, in the holomorphic setting, to
\cite[\S4]{Scardua2003}. The latter develops the compact complex theory,
whereas the arguments below concern complete complex-time flows on
noncompact manifolds. We use \emph{holomorphic hyperbolic splitting}
in a specified sense, without identifying this definition with every
notion of Anosov flow in the compact holomorphic literature.

\begin{definition}[Jointly integrable holomorphic hyperbolic flow]
\label{def:joint-hyperbolic}
Let $M$ be a connected complex manifold and let $X$ be a nowhere-vanishing
complete holomorphic vector field, with complex-time flow
$\Phi:\C\times M\to M$. We say that $\Phi$ is a \emph{jointly integrable
holomorphic hyperbolic flow} if there are holomorphic subbundles
$E^s,E^u\subset TM$, both of positive complex rank and invariant under
$D\Phi_t$ for every $t\in\C$, such that
\begin{equation}\label{eq:hyp-splitting}
 TM=E^s\oplus\C X\oplus E^u,\qquad E^{su}:=E^s\oplus E^u
\end{equation}
is an involutive holomorphic distribution, and if there exist a Hermitian
metric $h$ on $M$ and constants $C,\lambda>0$ for which, for all real
$t\ge0$,
\begin{equation}\label{eq:hyp-estimates}
 \|D\Phi_t(v_s)\|_h\le Ce^{-\lambda t}\|v_s\|_h,
 \qquad
 \|D\Phi_{-t}(v_u)\|_h\le Ce^{-\lambda t}\|v_u\|_h
\end{equation}
for $v_s\in E^s$ and $v_u\in E^u$. We call $E^s$ and $E^u$
the stable and unstable distributions, and the integral foliation
of $E^{su}$ the joint stable--unstable foliation.
\end{definition}

These are uniform \emph{real-time} estimates for a flow complete in
\emph{complex time}. In the noncompact setting the metric is part of the
data: the estimates are not invariant under an arbitrary change of
Hermitian metric. We do not require completeness of $h$ in the
definition, although both positive examples below admit complete
metrics of the indicated kind. The term ``hyperbolic'' is used in this
specified sense, without asserting equivalence with all definitions
of holomorphic Anosov flow on compact complex manifolds.

\begin{lemma}[The normalized joint time form]\label{lem:anosov-time}
Under the invariant integrable splitting~\eqref{eq:hyp-splitting}
there is a unique holomorphic one-form $\omega$ satisfying
$\omega(X)=1$ and $\ker\omega=E^{su}$. It is closed and invariant
under every $\Phi_t$, $t\in\C$.
\end{lemma}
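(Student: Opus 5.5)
The form $\omega$ will be defined pointwise by linear algebra, and closedness and invariance will then follow from Cartan's formula together with involutivity. The argument mirrors the proof of Theorem~\ref{thm:intro:f}.

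\emph{Existence and uniqueness.} Since $TM=E^{su}\oplus\C X$ is a holomorphic splitting into holomorphic subbundles and $X$ never vanishes, there is a unique holomorphic projection $TM\to\C X$ along $E^{su}$. Composing it with the trivialization $\C X\simeq M\times\C$, $cX\mapsto c$, gives a holomorphic one-form $\omega$ with $\ker\omega=E^{su}$ and $\omega(X)=1$. For uniqueness, suppose $\omega'$ is another such form. Then $\omega-\omega'$ vanishes on $E^{su}$ and on $X$, hence on $TM$.

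\emph{Invariance.} For $t\in\C$ I would consider $\Phi_t^*\omega$. Because $D\Phi_t$ preserves $E^s$ and $E^u$, the pullback $\Phi_t^*\omega$ vanishes on $E^{su}$. Because $D\Phi_t(X)=X\circ\Phi_t$, we get $(\Phi_t^*\omega)(X)=\omega(X)\circ\Phi_t=1$. Uniqueness then gives $\Phi_t^*\omega=\omega$ for all $t$. Differentiating in $t$ yields $\mathcal L_X\omega=0$.

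\emph{Closedness.} Cartan's formula gives $\iota_Xd\omega=\mathcal L_X\omega-d(\omega(X))=0$. For local holomorphic sections $Y,Z$ of $E^{su}$, involutivity gives
\[
d\omega(Y,Z)=Y\omega(Z)-Z\omega(Y)-\omega([Y,Z])=0.
\]
Since $TM=E^{su}\oplus\C X$, every pair of tangent vectors decomposes into these pieces. The two-form $d\omega$ vanishes on pairs from $E^{su}$ and whenever one argument is $X$, so by bilinearity and antisymmetry $d\omega=0$.

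I expect no real obstacle. The only point requiring care is that the invariance argument must use invariance of $E^s$ and $E^u$ under $D\Phi_t$ for complex $t$, which the definition supplies. The hyperbolic estimates \eqref{eq:hyp-estimates} play no role in this lemma.
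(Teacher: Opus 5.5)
Your proposal is correct and follows the paper's own proof essentially step for step. You define $\omega$ as the projection onto $\C X$ along $E^{su}$, obtain $\Phi_t^*\omega=\omega$ from invariance of the splitting (the paper asserts this in one line; you make it explicit via uniqueness), and deduce $d\omega=0$ from Cartan's formula on the components involving $X$ and from involutivity on pairs from $E^{su}$.
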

\begin{proof}
The decomposition of the tangent bundle defines a holomorphic
projection to the line generated by $X$, hence the indicated form.
Since the splitting is invariant, $\Phi_t^*\omega=\omega$ and
$\mathcal L_X\omega=0$. Cartan's formula and the normalization give
$\iota_Xd\omega=0$. If $Y,Z$ are local sections of $E^{su}$,
involutivity gives $d\omega(Y,Z)=-\omega([Y,Z])=0$.
These identities exhaust the components of $d\omega$ under the
direct-sum decomposition of $TM$.
\end{proof}

For $p\in M$ let $L_p$ denote the leaf of $\ker\omega$ through $p$.
If $\tau\in\C\setminus\{0\}$ satisfies $\Phi_\tau(p)=p$, then the
orbit loop has $\omega$-integral $\tau$, so that
$\tau\in\Lambda:=\Per(\omega)$. Therefore $\Phi_\tau$ preserves
$L_p$, and its restriction
$P_\tau=\Phi_\tau|_{L_p}$ is a holomorphic return automorphism.
The relation between a period of the time form and a return time of a
point is central to the next proposition.

\begin{proposition}[Isolated-return criterion]\label{prop:isolated-return}
Let $M$ be a connected complex manifold, $X$ a complete holomorphic
vector field and $\omega\in\mathcal T_X$. Suppose
$\Phi_\tau(p)=p$ for some $p\in M$ and $\tau\ne0$. Suppose there
is a neighborhood $U$ of $p$ in $M$ such that
\[
 U\cap L_p\cap\operatorname{Fix}(\Phi_\tau)=\{p\}.
\]
Then $\Per(\omega)$ is discrete. Accordingly the foliation $\ker\omega$
is the foliation by connected fibers of a holomorphic fiber bundle
onto $\C/\Per(\omega)$.
\end{proposition}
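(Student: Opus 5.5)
By Theorem~\ref{thm:global-stability}, it suffices to show that $\Lambda=\Per(\omega)$ is discrete. That theorem then supplies the holomorphic fiber bundle $M\to\C/\Lambda$ whose connected fibers are the leaves of $\ker\omega$. The plan is to use the abstract period covering from the proof of Theorem~\ref{stein:thm:suspension}, which requires neither discreteness nor any Stein assumption. On the connected covering $\widehat M$ associated with the kernel of the period homomorphism we have $\widehat M\simeq L\times\C$ with $L=T^{-1}(0)$. The deck transformations act by $(q,t)\mapsto(\rho(\lambda)q,t+\lambda)$ for $\lambda\in\Lambda$, and $L$ maps onto the leaf $L_p$. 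From this model I will first record that, for $\lambda\in\Lambda$, the flow map $\Phi_\lambda$ preserves $L_p$. Its restriction, after identifying $L_p$ with a quotient of $L$ by the stabilizer of the slice, is induced by $\rho(\lambda)^{-1}$. As in the proof of Theorem~\ref{thm:global-stability}, the return set is $\{t:\Phi_t(p)\in L_p\}=\Lambda$.

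Next I argue by contradiction. Suppose $\Lambda$ is not discrete, and pick nonzero $\lambda_n\in\Lambda$ with $\lambda_n\to0$. The idea is to produce fixed points of $\Phi_\tau$ on $L_p$ accumulating at $p$ but distinct from $p$. The maps $\Phi_{\lambda_n}$ commute with $\Phi_\tau$ and preserve $L_p$. Hence $p_n:=\Phi_{\lambda_n}(p)\in L_p$, and $\Phi_\tau(p_n)=\Phi_{\lambda_n}(\Phi_\tau p)=p_n$, so each $p_n$ lies in $L_p\cap\operatorname{Fix}(\Phi_\tau)$. Since $\lambda_n\to0$, continuity of the flow gives $p_n\to p$, so $p_n\in U$ for large $n$.

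It remains to check that $p_n\ne p$ for infinitely many $n$, and this is the main obstacle. If $p_n=p$, then $\lambda_n$ lies in the isotropy group $I_p=\{t:\Phi_t(p)=p\}$. Because $X(p)\ne0$ (indeed $\omega(X)=1$), the orbit map $t\mapsto\Phi_t(p)$ is an immersion, so $I_p$ is a discrete subgroup of $\C$. A discrete subgroup cannot contain nonzero elements tending to $0$, so for large $n$ we get $\lambda_n\notin I_p$ and $p_n\ne p$. These points contradict $U\cap L_p\cap\operatorname{Fix}(\Phi_\tau)=\{p\}$, and therefore $\Lambda$ is discrete.

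One detail needs care: the claim that $\Phi_\lambda(p)\in L_p$ for every $\lambda\in\Lambda$. I would derive it directly from the product cover. Lift $p$ to $(q_0,0)$. Then $\Phi_\lambda(p)$ is the image of $(q_0,\lambda)$, which equals the image of $(\rho(\lambda)^{-1}q_0,0)$ under a deck transformation. The latter lies in the slice $T=0$, which projects into $L_p$. With discreteness established, the fiber-bundle conclusion follows from Theorem~\ref{stein:thm:suspension}, or equivalently from (iii)$\Rightarrow$(iv) in Theorem~\ref{thm:global-stability}.
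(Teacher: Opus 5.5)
Your proof is correct and follows the paper's argument essentially verbatim. Nonzero periods $\lambda_n\to0$ give fixed points $\Phi_{\lambda_n}(p)$ of $\Phi_\tau$ on $L_p$ converging to $p$ in $M$; ambient isolation then forces $\Phi_{\lambda_n}(p)=p$, which contradicts discreteness of the isotropy group (as $X(p)\ne0$), and the bundle structure follows from Theorem~\ref{stein:thm:suspension}. Your explicit product-cover check that $\Phi_\lambda(p)\in L_p$ for $\lambda\in\Per(\omega)$ fills in a step the paper only asserts.
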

\begin{proof}
Put $\Lambda=\Per(\omega)$. For every $s\in\Lambda$, the time-$s$
map preserves $L_p$. Commutativity of the flow implies
\[
 P_\tau(\Phi_s(p))=\Phi_s(\Phi_\tau(p))=\Phi_s(p),
\]
so $\Phi_s(p)$ is a fixed point of $P_\tau$.
If $\Lambda$ were not discrete, it would contain nonzero elements
$s_j\to0$. Then $\Phi_{s_j}(p)\to p$ in $M$; the ambient
isolation hypothesis forces
$\Phi_{s_j}(p)=p$ for large $j$. But the isotropy subgroup
$\{t:\Phi_t(p)=p\}$ is discrete because $X(p)\ne0$, a
contradiction. The bundle conclusion is
Theorem~\ref{stein:thm:suspension}.
\end{proof}

The return map at a real hyperbolic periodic orbit gives a complementary period-discreteness criterion under the same ambient isolation hypothesis.

\begin{theorem}[Discreteness from a real hyperbolic periodic orbit with ambient isolation]
\label{thm:real-hyp-period}
Let $\Phi$ be a jointly integrable holomorphic hyperbolic flow in the sense of Definition~\ref{def:joint-hyperbolic}, and let
$\omega$ be the form from Lemma~\ref{lem:anosov-time}. If there
exists $p\in M$ with $\Phi_T(p)=p$ for a real $T>0$ and an
ambient neighborhood $U$ of $p$ satisfying
\[
 U\cap L_p\cap\operatorname{Fix}(\Phi_T)=\{p\},
\]
then $\Per(\omega)$ is discrete and $\ker\omega$ is a holomorphic
fiber-bundle foliation over $\C/\Per(\omega)$.
If $M$ is Stein, the base is $\C^*$ or an elliptic curve.
\end{theorem}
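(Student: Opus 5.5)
The plan is to reduce the theorem to the Isolated-return criterion, Proposition~\ref{prop:isolated-return}, with $\tau=T$. By Lemma~\ref{lem:anosov-time}, the form $\omega$ with $\ker\omega=E^{su}$ and $\omega(X)=1$ is closed and $\Phi_t$-invariant, so $\omega\in\mathcal T_X$. The hypothesis $\Phi_T(p)=p$ with real $T>0$ gives a nonzero return time. The assumed ambient isolation $U\cap L_p\cap\operatorname{Fix}(\Phi_T)=\{p\}$ is then literally the hypothesis of Proposition~\ref{prop:isolated-return}.

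That proposition yields the first two conclusions. Its argument runs as follows: every $s\in\Per(\omega)$ preserves $L_p$ and commutes with $\Phi_T$, so $\Phi_s(p)$ is a fixed point of $P_T$ on $L_p$. If $\Per(\omega)$ had nonzero elements $s_j\to0$, isolation would force $\Phi_{s_j}(p)=p$. This contradicts discreteness of the isotropy group of the nonsingular point $p$ (recall $X$ is nowhere zero by Definition~\ref{def:joint-hyperbolic}). Hence $\Per(\omega)$ is discrete. The suspension principle, Theorem~\ref{stein:thm:suspension} (equivalently item (iv) of Theorem~\ref{thm:global-stability}), then gives the holomorphic fiber bundle $F:M\to\C/\Per(\omega)$ whose connected fibers are the leaves of $\ker\omega$.

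For the Stein addendum, $T\in\Per(\omega)$ is nonzero, so $\Lambda=\Per(\omega)$ is a nonzero discrete subgroup of rank one or two. The quotient $\C/\Lambda$ is therefore $\C^*$ or an elliptic curve, and the rank-zero case giving $\C$ cannot occur. Steinness is used only to rule out compact leaves (Corollary~\ref{stein:cor:betti}). Beyond that it is not needed for the dichotomy, since a nonzero discrete subgroup already excludes the base $\C$.

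The hyperbolic estimates are not needed once the ambient isolation hypothesis is granted. Their natural role is to guarantee that isolation, via the standard argument that $P_T$ is a hyperbolic fixed point of the leaf map because $D P_T|_{E^s\oplus E^u}$ has no eigenvalue of modulus one. I would mention this only as a remark, so that the proof is a direct chain of earlier results.

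The only step needing care, and hence the main obstacle, is checking that the isotropy group $\{t:\Phi_t(p)=p\}$ is discrete. This holds because near $t=0$ the orbit map $t\mapsto\Phi_t(p)$ is an immersion, as $X(p)\neq0$, so it is locally injective.
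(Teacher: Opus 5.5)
Your proof is correct and is essentially the paper's: the theorem is a direct application of Proposition~\ref{prop:isolated-return} with $\tau=T$, after which $0\neq T\in\Per(\omega)$ excludes the base $\C$, and Steinness plays no role in that last step. One caution on your side remark: the hyperbolic fixed-point argument only makes $p$ isolated in the intrinsic topology of $L_p$, not ambiently isolated, and Example~\ref{ex:hyperbolic-period-family} with $\alpha\in\R\setminus\Q$ shows that the ambient hypothesis genuinely cannot be recovered from hyperbolicity, as the paper's proof also notes.
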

\begin{proof}
Write $P=\Phi_T|_{L_p}$ and
$A=D\Phi_T(p)|_{T_pL_p}$. The invariant splitting of $T_pL_p$
and the estimates for $D\Phi_{nT}(p)=A^n$ show that the spectrum of
$A$ on $E^s_p$ has modulus less than one. Applying the same argument
to $A^{-n}$ on $E^u_p$ shows that the spectrum on $E^u_p$ has
modulus greater than one. In particular, $A-I$ is invertible.
The holomorphic inverse function theorem applied to $P-\mathrm{id}$
in a chart of the leaf at $p$ makes $p$ an isolated fixed point of
$P$ in the intrinsic leaf topology. This alone does not imply ambient
isolation. The additional ambient hypothesis permits application of
Proposition~\ref{prop:isolated-return}, giving discreteness and the
bundle structure. Finally $T\in\Per(\omega)$ rules out the zero
subgroup, leaving rank one or two.
\end{proof}

The hyperbolicity estimates concern real time. The ambient-isolation
criterion itself applies to every nonzero complex return time.
A stronger conclusion is
available when the global stable and unstable manifolds of the return
map have a product property on the actual joint leaf, rather than
only on a covering space. This global product hypothesis does not
require the ambient isolation assumption of
Theorem~\ref{thm:real-hyp-period}.

\begin{theorem}[Cyclicity under global dynamical product structure]
\label{thm:global-product-cyclic}
Let $\Phi$ satisfy Definition~\ref{def:joint-hyperbolic}, let
$\omega$ be its normalized joint time form, and suppose that
$\Phi_T(p)=p$ for some real $T>0$. Put $P=\Phi_T|_{L_p}$.
Suppose that for every $a,b\in L_p$ the global dynamical stable manifold
$W^s_P(a)$ and global dynamical unstable manifold $W^u_P(b)$
intersect in exactly one point. Then
\[
 \Per(\omega)=\{t\in\C:\Phi_t(p)=p\}.
\]
If $M$ is Stein, this common group is a nonzero discrete cyclic
subgroup of $\C$, and the joint foliation defines a holomorphic
fiber bundle over $\C^*$.
\end{theorem}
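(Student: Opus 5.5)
Set $\Lambda=\Per(\omega)$ and $\Gamma_p=\{t\in\C:\Phi_t(p)=p\}$. The plan is to prove the two inclusions $\Gamma_p\subseteq\Lambda$ and $\Lambda\subseteq\Gamma_p$ separately. The global product hypothesis is used to show that every $\Lambda$-translate of $p$ along the flow is a fixed point of $P$, and that $P$ has only one fixed point.

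The inclusion $\Gamma_p\subseteq\Lambda$ is already recorded before Proposition~\ref{prop:isolated-return}. If $\Phi_t(p)=p$, the orbit segment from $0$ to $t$ is a loop whose $\omega$-integral is $t$, so $t\in\Lambda$ (for $t=0$ this is trivial).

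For the reverse inclusion, fix $s\in\Lambda$. As in the proof of Proposition~\ref{prop:isolated-return}, realize $s$ on the period covering $\widehat M\simeq L\times\C$ from Theorem~\ref{stein:thm:suspension}; that construction does not need discreteness. This shows that $\Phi_s$ preserves the leaf $L_p$, and commutativity of the flow gives $P(\Phi_s(p))=\Phi_s(P(p))=\Phi_s(p)$. So $q=\Phi_s(p)\in L_p$ is a fixed point of $P$.

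The key step, which I expect to be the main obstacle, is to show that a hyperbolic automorphism $P$ of $L_p$ with the global product property has $p$ as its \emph{only} fixed point. Let $q$ be a fixed point of $P$. By the real-time estimates in~\eqref{eq:hyp-estimates}, the global dynamical stable manifold $W^s_P(q)$ consists of the points $x\in L_p$ with $d(P^n x,q)\to0$, and similarly for $W^u_P$ with $P^{-n}$. Since $p$ and $q$ are fixed, $q\in W^s_P(q)\cap W^u_P(q)$ and $p\in W^s_P(p)\cap W^u_P(p)$. Apply the hypothesis to the pair $(a,b)=(p,q)$: $W^s_P(p)\cap W^u_P(q)$ is a single point $z$. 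Then $P(z)$ lies in $W^s_P(p)\cap W^u_P(q)$ as well, because both sets are $P$-invariant ($p$ and $q$ being fixed), so uniqueness gives $P(z)=z$. A fixed point in $W^s_P(p)$ satisfies $z=\lim P^n z=p$, and a fixed point in $W^u_P(q)$ satisfies $z=\lim P^{-n}z=q$. Hence $p=q$. The point needing care is that these limits are taken in the leaf metric induced by $h$. I will use that on $L_p$ the dynamical stable set coincides with the set on which the distance is contracted, in the sense fixed by the definition of global dynamical stable manifold. If necessary, I will take that characterization as the definition, so no completeness of $h$ is needed.

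With $p=q$ we get $\Phi_s(p)=p$, that is, $s\in\Gamma_p$, and therefore $\Lambda=\Gamma_p$. Because $X(p)\neq0$, the isotropy group $\Gamma_p$ is discrete, and it contains $T\neq0$. So $\Lambda$ is a nonzero discrete group, and Theorem~\ref{stein:thm:suspension} gives the bundle over $\C/\Lambda$.

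For the Stein case it remains to rule out rank two. If $\Gamma_p$ were a lattice, the orbit of $p$ would be a compact complex curve $\C/\Gamma_p$ embedded in $M$, which is impossible in a Stein manifold by the maximum principle, as in Corollary~\ref{stein:cor:betti}. Hence $\Lambda$ is cyclic and the base is $\C/\Lambda\simeq\C^*$.
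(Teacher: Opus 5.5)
Your proposal is correct and follows the paper's proof essentially step for step. The product hypothesis applied to $(p,q)$ makes the unique point of $W^s_P(p)\cap W^u_P(q)$ a $P$-fixed point, hence equal to both $p$ and $q$, so $\operatorname{Fix}(P)=\{p\}$. The commuting-return identity then gives $\Per(\omega)\subseteq\{t:\Phi_t(p)=p\}$, and the orbit-loop integral gives the reverse inclusion. Discreteness, nontriviality and cyclicity in the Stein case follow, as in the paper, from $X(p)\neq0$, from $T\in\Per(\omega)$, and from the absence of compact curves in Stein manifolds.

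Your caveat about the metric is not needed. With $W^s_P$ and $W^u_P$ defined as in the paper, any compatible metric already forces a fixed point $z$ of $W^s_P(p)\cap W^u_P(q)$ to equal $p$ and $q$. The hyperbolicity estimates are not needed for this step either.
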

\begin{proof}
Suppose $q\in L_p$ is fixed by $P=\Phi_T|_{L_p}$. The point
$r\in W^s_P(p)\cap W^u_P(q)$ is unique. Since $P$ preserves these
two sets, uniqueness implies $P(r)=r$. Its positive iterates converge
to $p$, and its negative iterates converge to $q$. As $r$ is fixed,
$r=p=q$. Therefore $\operatorname{Fix}(P)=\{p\}$.
For every $s\in\Per(\omega)$, the preceding commuting-return
identity makes $\Phi_s(p)$ a fixed point of $P$. It follows that
$\Phi_s(p)=p$. Conversely, integration along an orbit loop shows
that every element of the isotropy group of $p$ belongs to
$\Per(\omega)$. The two groups coincide.

The common group is discrete because $X(p)\ne0$, so its isotropy
has no accumulation at the origin; it is nonzero because it contains
$T$. A discrete rank-two isotropy lattice would make the
orbit through $p$ a compact elliptic curve $\C/\Lambda$,
embedded as a complex orbit in $M$. This is impossible when $M$ is
Stein: every holomorphic map from a compact connected complex curve
to a Stein manifold is constant. Thus the isotropy, and hence the
period group, is cyclic. The suspension principle completes the proof.
\end{proof}

\subsection{Proof of Theorem~\ref{thm:intro:hyperbolic}}
Now we are ready to prove the last of our main results.

\begin{proof}[Proof of Theorem~\ref{thm:intro:hyperbolic}]
The invariant splitting, involutivity, and hyperbolic estimates
are precisely the jointly integrable hypotheses of
Definition~\ref{def:joint-hyperbolic}. Lemma~\ref{lem:anosov-time}
constructs the unique normalized closed holomorphic time form
with kernel $E^s\oplus E^u$. The real-period and global
stable--unstable product hypotheses allow
Theorem~\ref{thm:global-product-cyclic} to be applied. It identifies
the period group with the nonzero discrete isotropy of $p$, and
Theorem~\ref{stein:thm:suspension} gives the holomorphic fiber bundle.
When $M$ is Stein the group is cyclic, so its quotient base is
$\C^*$, completing the proof.
\end{proof}

Here $W^s_P(a)$ and $W^u_P(b)$ denote the global
dynamical stable and unstable sets of $P$, respectively,
defined by
\[
W^s_P(a)=\{z\in L_p:d(P^n(z),P^n(a))\to0
\text{ as }n\to+\infty\},
\]
and
\[
W^u_P(b)=\{z\in L_p:d(P^{-n}(z),P^{-n}(b))\to0
\text{ as }n\to+\infty\},
\]
where $d$ is a compatible metric.

\subsection{Examples}
Next we give a holomorphic flow
with hyperbolic return dynamics and a cyclic period group.
\begin{example}
\label{ex:hyperbolic-Cstar}
Take $M=\C^*\times\C^2$ with coordinates $(q,x,y)$, choose $a,T>0$,
and let
\[
 X=\frac{2\pi i}{T}q\partial_q-ax\partial_x+ay\partial_y,
 \qquad \omega=\frac{T}{2\pi i}\frac{dq}{q}.
\]
The complete flow is
\[
 \Phi_t(q,x,y)=(e^{2\pi it/T}q,e^{-at}x,e^{at}y),
\]
so $d\omega=0$, $\omega(X)=1$, and
$\Per(\omega)=T\Z$. The joint foliation consists of the
fibers $\{q\}\times\C^2$. The point $(1,0,0)$ has real period
$T$, and its return on the fiber is
$P_T(x,y)=(e^{-aT}x,e^{aT}y)$, with unique fixed point.
The invariant bundles $E^s=\C\partial_x$ and
$E^u=\C\partial_y$ satisfy~\eqref{eq:hyp-estimates} for the
complete Hermitian metric
\[
 h=\left|dx+\frac{aT}{2\pi i}x\frac{dq}{q}\right|^2
  +\left|dy-\frac{aT}{2\pi i}y\frac{dq}{q}\right|^2
  +\left|\frac{dq}{q}\right|^2.
\]
Indeed the three displayed coframe elements are respectively
multiplied by $e^{-at},e^{at}$, and $1$ under pullback by $\Phi_t$.
For completeness, lift a rectifiable finite-length path to a
logarithmic coordinate $u$ with $q=e^u$. The $|du|^2$ term gives a finite
variation of $u$, so its image is bounded. Along the path, the other
coframe terms are $dx+(aT/(2\pi i))x\,du$ and
$dy-(aT/(2\pi i))y\,du$. The corresponding inhomogeneous linear
integral inequalities, with bounded $u$ and finite integrals of the
coframe norms, show via Gronwall's inequality that $x$ and $y$ are
bounded and Cauchy toward a finite-length endpoint. Thus no divergent
curve has finite length, which proves completeness. This furnishes a noncompact Stein model with both stable and unstable directions and a genuine real periodic orbit. In particular, it is a jointly integrable holomorphic hyperbolic flow in the sense of Definition~\ref{def:joint-hyperbolic}.
\end{example}

\begin{example}[Nonperiodic jointly integrable hyperbolic flow]
\label{ex:hyperbolic-affine}
Let $M=\C^3$ with coordinates $(x,y,z)$, choose $a>0$, and set
\[
X=\partial_z-ax\partial_x+ay\partial_y,\qquad \omega=dz.
\]
The complete complex-time flow is
\[
\Phi_t(x,y,z)=(e^{-at}x,e^{at}y,z+t).
\]
The translation in $z$ shows that every orbit is nonperiodic and
biholomorphic to $\C$. The invariant holomorphic bundles
$E^s=\C\partial_x$ and $E^u=\C\partial_y$ have positive rank and
$E^{su}=\ker dz$ is integrable. Furthermore $d\omega=0$,
$\omega(X)=1$, and $\Per(\omega)=\{0\}$. Put
\[
\theta_s=dx+ax\,dz,\qquad
\theta_u=dy-ay\,dz,\qquad \theta_0=dz.
\]
These forms constitute a global holomorphic coframe, with
$\Phi_t^*\theta_s=e^{-at}\theta_s$,
$\Phi_t^*\theta_u=e^{at}\theta_u$, and
$\Phi_t^*\theta_0=\theta_0$. Thus the metric
$h=|\theta_s|^2+|\theta_u|^2+|\theta_0|^2$ satisfies
\eqref{eq:hyp-estimates} with $C=1$ and $\lambda=a$.
It is complete: in the global coordinates
$u=e^{az}x$, $v=e^{-az}y$, and $z$, it reads
\[
h=e^{-2a\operatorname{Re}z}|du|^2
  +e^{2a\operatorname{Re}z}|dv|^2+|dz|^2.
\]
A finite-length path has bounded $z$-variation, and on bounded
$z$-regions the metric is uniformly comparable with the Euclidean
metric in $(u,v,z)$. Hence no divergent path can have finite length.
This is a jointly integrable holomorphic hyperbolic flow in the sense
of Definition~\ref{def:joint-hyperbolic}, with an exact normalized
time form and no periodic complex-time orbit.
\end{example}

\begin{example}[Elliptic periods without hyperbolicity]
\label{ex:elliptic-neutral}
Let $M=(\C^*)^2\times\C^2$, with coordinates $(z,w,x,y)$, and
set
\[
 X=2\pi i z\partial_z-x\partial_x+y\partial_y,
 \qquad
 \omega=\frac{1}{2\pi i}\frac{dz}{z}
       +\frac{1}{2\pi}\frac{dw}{w}.
\]
Then $\Phi_t(z,w,x,y)=(e^{2\pi it}z,w,e^{-t}x,e^ty)$, and
$\omega(X)=1$. Direct integration over the two standard generators
of $H_1((\C^*)^2,\Z)$ gives
\[
 \Per(\omega)=\Z+i\Z.
\]
The point $(1,1,0,0)$ has real period $1$, but $w$ is constant
under the flow. The tangent direction $w\partial_w$ contributes a
neutral direction to $\ker\omega$ after subtraction of a multiple
of $X$, so the splitting~\eqref{eq:hyp-splitting} cannot be uniformly
hyperbolic on the entire joint distribution. This example shows that
Steinness, a closed normalized time form, and a real periodic orbit
alone do not exclude elliptic periods. The next example realizes
elliptic periods even with the full hyperbolicity of
Definition~\ref{def:joint-hyperbolic}.
\end{example}

\begin{example}[Hyperbolic returns with nondiscrete or elliptic periods]
\label{ex:hyperbolic-period-family}
Let $a>0$, $\alpha\in\C$, and put
\[
 V=\{(x,y)\in\C^2:xy\ne1\},\qquad
 M=\C^*\times V,\qquad f=xy-1.
\]
This is a connected smooth affine Stein threefold. Indeed it is
biholomorphic to the closed affine submanifold
\[
 \{(q,r,x,y,v)\in\C^5:qr=1,\ v(xy-1)=1\}.
\]
Set
\[
 X=2\pi i q\partial_q-ax\partial_x+ay\partial_y,
 \qquad
 \omega=\frac1{2\pi i}\left(\frac{dq}{q}
                         +\alpha\frac{df}{f}\right).
\]
The complete complex-time flow is
\[
 \Phi_t(q,x,y)=(e^{2\pi it}q,e^{-at}x,e^{at}y).
\]
Since $f$ is invariant, $d\omega=0$ and $\omega(X)=1$.
The loop around the $q$-factor has period $1$, while the loop
$q=1$, $x=1$, $y=1+\varepsilon e^{i\theta}$, for
$0\le\theta\le2\pi$ and $\varepsilon>0$, has period $\alpha$.
Every period is an integral combination of these values, by the
winding-number formula. Consequently
\[
 \Per(\omega)=\Z+\alpha\Z.
\]

Define global holomorphic vector fields
\[
 V_s=\partial_x-\alpha\frac{qy}{f}\partial_q,
 \qquad
 V_u=\partial_y-\alpha\frac{qx}{f}\partial_q.
\]
They span line bundles $E^s=\C V_s$, $E^u=\C V_u$, with
\[
 TM=E^s\oplus\C X\oplus E^u,
 \qquad E^s\oplus E^u=\ker\omega.
\]
The joint distribution is involutive because $\omega$ is closed,
and direct differentiation gives
\[
 D\Phi_t(V_s|_p)=e^{-at}V_s|_{\Phi_t(p)},\qquad
 D\Phi_t(V_u|_p)=e^{at}V_u|_{\Phi_t(p)}
 \quad(t\in\C).
\]
The holomorphic coframe
\[
 \theta_s=dx+ax\omega,\qquad
 \theta_u=dy-ay\omega,\qquad \omega
\]
is dual to $(V_s,V_u,X)$ and satisfies
\[
 \Phi_t^*\theta_s=e^{-at}\theta_s,\qquad
 \Phi_t^*\theta_u=e^{at}\theta_u,\qquad
 \Phi_t^*\omega=\omega.
\]
Thus $h=|\theta_s|^2+|\theta_u|^2+|\omega|^2$ is a Hermitian
metric satisfying~\eqref{eq:hyp-estimates} with $C=1$ and
$\lambda=a$. This verifies all the hypotheses of
Definition~\ref{def:joint-hyperbolic}, which does not require
completeness of $h$.

The point $p=(1,0,0)$ has real period $1$. On the joint leaf
through $p$, the derivative of $P=\Phi_1|_{L_p}$ at $p$ has
eigenvalues $e^{-a}$ and $e^a$. In particular, $p$ is an isolated
fixed point in the intrinsic leaf topology. Nevertheless, if
$\alpha\in\R\setminus\Q$, then $\Per(\omega)$ is nondiscrete.
For nonzero $s_j\in\Z+\alpha\Z$ tending to zero, the distinct
points $\Phi_{s_j}(p)=(e^{2\pi i s_j},0,0)$ lie on $L_p$,
are fixed by $P$, and converge to $p$ in $M$. They do not converge
in the intrinsic leaf topology: a local plaque through $p$ is
given by branches of $\log q+\alpha\log f=\mathrm{constant}$,
and meets $\{x=y=0\}$ only at $p$.

If $\operatorname{Im}\alpha\ne0$, then $\Per(\omega)$ is a
rank-two lattice, so the joint foliation is a holomorphic fiber
bundle over an elliptic curve. For $\alpha=i$, the points
$\Phi_{ni}(p)=(e^{-2\pi n},0,0)$, $n\in\Z$, are distinct
isolated fixed points of $P$ on the same joint leaf. Thus elliptic
periods can occur under full hyperbolicity as defined here.
\end{example}

The following example exhibits a holomorphic flow
with hyperbolic stable and unstable directions
whose joint distribution $E^s\oplus E^u$ is
contact and hence nonintegrable.
\begin{example}[A complete hyperbolic Reeb flow on an affine Stein group]
\label{ex:sl2-contact}
Let $G=\mathrm{SL}(2,\C)$, the smooth affine Stein hypersurface
$ad-bc=1$ in $\C^4$. In its Lie algebra choose
\[
H=\frac12\begin{pmatrix}1&0\\0&-1\end{pmatrix},\quad
E=\begin{pmatrix}0&1\\0&0\end{pmatrix},\quad
F=\begin{pmatrix}0&0\\1&0\end{pmatrix}.
\]
Then $[H,E]=E$, $[H,F]=-F$, and $[E,F]=2H$. Define
\[
\Phi_t(g)=g\exp(tH),\qquad t\in\C.
\]
This is a complete holomorphic flow. Write $H^L,E^L,F^L$ for the
left-invariant holomorphic vector fields ($A^L_g=d(L_g)_eA=gA$).
The infinitesimal generator is $X=H^L$. Right multiplication acts in
the left trivialization by $\operatorname{Ad}(\exp(-tH))$, hence
\[
D\Phi_t(E^L_g)=e^{-t}E^L_{\Phi_t(g)},\qquad
D\Phi_t(F^L_g)=e^tF^L_{\Phi_t(g)}.
\]
For the left-invariant Hermitian metric $h$ making $H^L,E^L,F^L$
orthonormal, these are exact real-time hyperbolicity estimates with
$C=\lambda=1$. The underlying left-invariant Riemannian metric is
complete, as is every left-invariant Riemannian metric on a connected
Lie group. The line fields $E^s=\C E^L$ and $E^u=\C F^L$ are
invariant and holomorphic, but their sum is not integrable, because
$[E^L,F^L]=2H^L$.

Let $\alpha$ be the left-invariant holomorphic one-form defined by
$\alpha(H^L)=1$, $\alpha(E^L)=\alpha(F^L)=0$. The Maurer--Cartan
formula yields
\[
d\alpha(E^L,F^L)=-\alpha([E^L,F^L])=-2,
\qquad \alpha\wedge d\alpha\ne0.
\]
Also $\alpha(X)=1$ and $\iota_Xd\alpha=0$, so $X$ is the
holomorphic Reeb vector field of the contact form $\alpha$.
In contrast to the jointly integrable case, no closed holomorphic
one-form $\eta$ on $G$ can satisfy $\eta(X)=1$. Indeed,
$\exp(4\pi iH)=I$, and the associated complex-time orbit loop
would have $\eta$-integral $4\pi i$; this contradicts the fact that
$G$ is simply connected, hence that every closed holomorphic
one-form is exact. Thus $\mathcal T_X=\varnothing$.
The real subgroup $\mathrm{SL}(2,\R)$ carries the corresponding
homogeneous real-time geodesic-flow model, up to its standard
central quotient. This familiar homogeneous construction is used
here as a geometric contrast, not asserted to be original.
\end{example}

Thus, uniform real-time hyperbolicity alone
does not imply the joint integrability of
$E^s\oplus E^u$ required in
Definition~\ref{def:joint-hyperbolic}.

\section*{Statements and Declarations}
\noindent\textbf{Funding.} This research was supported by Fundação Getúlio Vargas (FGV-EMAp, Rio de Janeiro). 

\noindent\textbf{Competing interests.} There are no conflicts of interest.
\noindent\textbf{Data availability.} No datasets were generated or analysed in this theoretical mathematical study.

\noindent\textbf{Use of generative AI tools.}  The author used ChatGPT (OpenAI) 
 to assist with language editing, LaTeX compilation and text grammar review.
All mathematical statements, proofs, and references
are due to and were reviewed by the author, who takes full
responsibility for the accuracy, originality,
and final content of the manuscript.

\end{document}